\renewenvironment{abstract}
{\small\vspace{-1em}
\begin{center}
\bfseries\abstractname\vspace{-.5em}\vspace{0pt}
\end{center}
\list{}{
\setlength{\leftmargin}{0.6in}%
\setlength{\rightmargin}{\leftmargin}}%
\item\relax}
{\endlist}
\declaretheorem[name=Theorem, numberwithin=section]{theorem}
\declaretheorem[name=Lemma, sibling=theorem]{lemma}
\declaretheorem[name=Claim, sibling=theorem]{claim}
\declaretheorem[name=Remark, style=remark, sibling=theorem]{remark}
\def\cqedsymbol{\ifmmode$\lrcorner$\else{\unskip\nobreak\hfil
\penalty50\hskip1em\null\nobreak\hfil$\lrcorner$
\parfillskip=0pt\finalhyphendemerits=0\endgraf}\fi}
\Crefname{theorem}{Theorem}{Theorems}
\Crefname{lemma}{Lemma}{Lemmas}
\Crefname{claim}{Claim}{Claims}
\newcommand{\sg}[1]{\left\{#1\right\}}
\newcommand{\ceil}[1]{\left \lceil #1 \right \rceil}
\newcommand{\Gi}{G_i}
\newcommand{\Gell}{G_{\ell}}
\newcommand{\Gip}{G_{i+1}}
\newcommand{\Ri}{R_i}
\newcommand{\Rell}{R_{\ell}}
\newcommand{\Rip}{R_{i+1}}
\newcommand{\Gami}{\Gamma_i}
\newcommand{\Gamell}{\Gamma_{\ell}}
\newcommand{\Gamip}{\Gamma_{i+1}}
\newcommand{\mui}{\mu_i}
\newcommand{\muell}{\mu_{\ell}}
\newcommand{\muip}{\mu_{i+1}}
\newcommand{\Xip}{X_{i+1}}
\newcommand{\Xell}{X_{\ell}}
\newcommand{\fip}{f_{i+1}}
\newcommand{\np}{\mathrm{np}}
\newcommand{\ie}{\emph{i.e.},\xspace}
\let\leq\leqslant
\let\geq\geqslant
\title{Largest planar graphs of diameter $3$ and fixed maximum degree -- connection with fractional matchings}
\author[1]{Antoine Dailly}
\author[2]{Sasha Darmon}
\author[3]{Ugo Giocanti\thanks{Supported by the National Science Center of Poland
under grant 2022/47/B/ST6/02837 within the OPUS 24 program, and partially supported by the French ANR
Project TWIN-WIDTH (ANR-21-CE48-0014-01).}}
\author[4]{Claire Hilaire\thanks{Partially supported by Slovenian Research and Innovation Agency (research project J1-4008).}}
\author[5]{Petru Valicov}
\affil[1]{Université Clermont Auvergne, INRAE, UR TSCF, 63000, Clermont-Ferrand, France}
\affil[2]{LBBE, Université Lyon 1, CNRS, Lyon, France \& INRIA, Université Lyon 1, Lyon, France}
\affil[3]{Faculty of Mathematics and Computer Science, Jagiellonian University,
Kraków, Poland}
\affil[4]{FAMNIT, University of Primorska, Koper, Slovenia}
\affil[5]{LIRMM, Université de Montpellier, CNRS, Montpellier, France.}
\begin{document}

\maketitle

\begin{abstract}
 The degree diameter problem asks for the maximum possible number of vertices in a graph of maximum degree $\Delta$ and diameter $D$. In this paper, we focus on planar graphs of diameter $3$. Fellows, Hell and Seyffarth (1995) proved that for all $\Delta\geq 8$, the maximum number $\np_{\Delta, D}$ of vertices of a planar graph with maximum degree at most $\Delta$ and diameter at most 3 satisfies $\frac{9}{2}\Delta - 3 \leq \np_{\Delta,3} \leq 8 \Delta + 12$. We show that the lower bound they gave is optimal, up to an additive constant, by proving that there exists $c>0$ such that $\np_{\Delta,3} \leq \frac{9}{2}\Delta + c$ for every $\Delta\geq 0$. Our proof consists in a reduction to the fractional maximum matching problem on a specific class of planar graphs, for which we show that the optimal solution is $\tfrac{9}{2}$, and characterize all graphs attaining this bound.
\end{abstract}

\section{Introduction}

\paragraph*{The degree diameter problem.}
 Given two integers $\Delta$ and $D$, the degree diameter problem consists in finding the largest possible number of vertices $n_{\Delta, D}$ in a graph of maximum degree $\Delta$ and diameter $D$. This problem has been actively studied since the 1960's, and we refer to the exhaustive survey of Miller and \v{S}ir\'a\v{n}~\cite{MS13} for a detailed overview.

 An easy upper bound on $n_{\Delta,D}$, called the \emph{Moore bound} $M_{\Delta, D}$, can be established by observing that for every graph $G$ of diameter at most $D$ and maximum degree $\Delta$ and every vertex $v\in V(G)$, the number of vertices at distance at most $i\in \sg{1, \ldots, D-1}$ from $v$ in $G$ is at most $\Delta(\Delta-1)^{i-1}$, giving for every $\Delta\geq 2, D\geq 1$ the inequality: 
 
\begin{equation*}
     n_{\Delta, D} \leq M_{\Delta, D}:=
     \begin{cases} 
          1 + \Delta\frac{(\Delta-1)^{D}-1}{\Delta-2} & \text{if } \Delta > 2, \\
          2D+1 & \text{if } \Delta = 2. 
     \end{cases}
\end{equation*}

Graphs whose number of vertices is exactly $M_{\Delta, D}$ are called \emph{Moore graphs}, and a large amount of work has been dedicated to establish an exhaustive list of Moore graphs. In particular, it has been proven that only finitely many Moore graphs exist, and all of them most have diameter at most $3$ (moreover, the only Moore graph with diameter $3$ is $C_7$) see~\cite{HS60, Damerell, BI73}. Furthermore, the only possible Moore graph whose existence is still an open question should have diameter $2$ and maximum degree exactly $57$, see~\cite{DALFO2019survey} for a detailed survey on this question. 

Determining the asymptotics of $n_{\Delta, D}$ is also a widely open question, at the moment the best known lower bounds follow from a construction of Canale and G\'omez \cite{CG05}, who proved that for every large enough integer $D$, there are infinitely many values $\Delta$ for which there exists a graph with order at least 
$\left(\tfrac{\Delta}{1.6}\right)^{D}$. 

\paragraph*{The degree diameter problem in planar graphs.}
Over the last decades, variants of the degree diameter problem have been considered when one restricts to some specific classes of graphs. In this paper, we will focus on planar graphs, however it is worth mentioning that the degree diameter problem has also been studied in graphs of bounded genus, and more systematically in sparse classes \cite{SiSi,PVW15,NEVO2016}. 

For every $\Delta\geq 1, D\geq 1$, we let $\np_{\Delta, D}$ denote the maximum possible number of vertices in a planar graph with diameter at most $D$ and maximum degree $\Delta$. 
The study of the degree diameter problem in planar graphs was initiated by Fellows and Hell~\cite{HS93}, who proved that for all $\Delta \geq 8$, $\np_{\Delta, 2}=\lfloor \tfrac32\rfloor +1$. Using the planar separator theorem by
Lipton and Tarjan~\cite{LiptonTarjan}, 
Fellows, Hell and Seyffarth \cite{FHS95} proved that there exists a constant $c>0$ such that for all large enough $D, \Delta$, we have $\np_{\Delta, D}\leq cD\Delta^{\lfloor\frac{D}{2}\rfloor}$. Nevo, Pineda-Villavicencio and Wood \cite{NEVO2016} proved further that the factor $D$ can be removed in the previous inequality, in the sense that there exists $f:\mathbb N\to \mathbb N$ such that $\np_{\Delta, D}\leq c\Delta^{\lfloor\frac{D}{2}\rfloor}$ for all $D\geq 1$ and $\Delta\geq f(D)$.
When $D$ is even, Tishchenko \cite{TISHCHENKO2012} proved that for every $\Delta\geq 6(12D+1)$, we have $\np_{\Delta, D}=(\tfrac32+o(1))\Delta^{\tfrac{D}{2}}$. In fact, Tishchenko even computed an exact formula for $\np_{\Delta, D}$ when $D$ is even and $\Delta\geq f(D)$ for some linear function $f$. 

On the other hand, when $D$ is odd, the general upper bound of Nevo, Pineda-Villavicencio and Wood remains the best known. When $D=3$, Fellows, Hell and Seyffarth \cite{FHS95} explicitly asked for the value of $\np_{\Delta, 3}$, and proved the following.

\begin{theorem}[\cite{FHS95}]
\label{thm:previous_general_bound}
For every $\Delta\geq 4$, 
$$\left\lfloor\frac{9}{2}\Delta\right\rfloor-3 \leq \np_{\Delta,3} \leq 8\Delta + 12.$$
\end{theorem}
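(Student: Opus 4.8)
The statement splits into a lower bound (a construction) and an upper bound (a counting argument), which I would handle separately.

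\textbf{Lower bound $\lfloor\frac{9}{2}\Delta\rfloor-3\le\np_{\Delta,3}$.} The plan is to exhibit, for each $\Delta$, an explicit planar graph $G_\Delta$ with $\Delta(G_\Delta)\le\Delta$, $\diam(G_\Delta)\le 3$, and exactly $\lfloor\frac{9}{2}\Delta\rfloor-3$ vertices. The shape I would aim for is a bounded-size ``core'' consisting of a few vertices of degree $\Theta(\Delta)$, arranged in a small plane graph so that any two core vertices are close and obey a prescribed adjacency/domination pattern, together with many vertices of degree at most $2$ that consume the leftover degree budget at the core. Concretely: first fix the core and a plane embedding of it; then insert the low-degree padding vertices into its faces, keeping the drawing planar and never creating a pair of vertices at distance $4$. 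Checking planarity then just means writing down the embedding, and checking $\diam(G_\Delta)\le 3$ is a finite case analysis over the (bounded list of) vertex types: two padding vertices, a padding vertex and a core vertex, two core vertices. The coefficient $\frac{9}{2}$ is not accidental --- it is exactly the value of the fractional-matching relaxation that the rest of the paper studies, so the correct core to use is a blow-up of an extremal graph for that relaxation; this is also why the construction is essentially best possible for this style of argument, which is the point of the paper.

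\textbf{Upper bound $\np_{\Delta,3}\le 8\Delta+12$.} Here I would fix a plane graph $G$ with $\diam(G)\le 3$ and $\Delta(G)\le\Delta$, pick a vertex $v$ of maximum degree, and layer $V(G)$ by distance from $v$: $\{v\}=L_0$, $L_1=N(v)$, $L_2$, $L_3$, with no $L_4$ because $\diam(G)\le 3$. Trivially $|L_0|+|L_1|\le 1+\Delta$, so the whole problem is to show $|L_2|+|L_3|=\mathcal{O}(\Delta)$ with a good constant. This is the one place where planarity has to be used \emph{globally}: bounding, say, the number of edges between $L_1$ and $L_2$ using only the degree bound gives something quadratic in $\Delta$, and a planar-bipartite bound on a single pair of layers does not help. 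The two clean ways I see to get a linear bound are: (i) the planar separator theorem of Lipton and Tarjan --- specialized to $D=3$, the Fellows--Hell--Seyffarth estimate $\np_{\Delta,D}\le cD\Delta^{\lfloor\frac{D}{2}\rfloor}$ already gives $\np_{\Delta,3}=\mathcal{O}(\Delta)$, and one then tracks the constants; or (ii) a direct Euler's-formula argument on the BFS layering from $v$ (or from a well-chosen edge, splitting $V(G)$ into a bounded number of ``cells'' indexed by distance to both endpoints), bounding each cell by roughly $\Delta$. Either route yields a bound of the shape $8\Delta+\mathcal{O}(1)$; pinning it to exactly $8\Delta+12$ is bookkeeping in whichever argument is used.

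\textbf{Expected main obstacle.} The genuinely non-routine step is the linear upper bound $|L_2|+|L_3|=\mathcal{O}(\Delta)$: making the dependence on $\Delta$ linear rather than quadratic forces one to exploit planarity of $G$ as a whole (through a separator, or through a careful layered Euler count), not merely of local bipartite subgraphs. On the lower-bound side the only delicate point is the joint design of the core --- it must be planar, keep all pairwise distances at most $3$ once the padding vertices are added, and realize the maximal padding capacity $\frac{9}{2}$ --- but once the right core is written down, both the planarity check and the diameter check are routine.
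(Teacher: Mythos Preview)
This theorem is quoted from \cite{FHS95}; the present paper does not supply its own proof. For the lower bound the paper only depicts the Fellows--Hell--Seyffarth construction in \Cref{fig: lower-bound} (top left): a core isomorphic to $K_3+3K_2$ with degree-$2$ twins attached to each core edge. Your lower-bound plan (bounded planar core plus degree-$2$ padding, with the core coming from an extremal configuration for the fractional-matching relaxation) is exactly this, so on that half your sketch agrees with what the paper shows.

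For the upper bound the paper says nothing beyond the citation, so there is no ``paper's proof'' to compare against. Your two suggested routes are plausible programs, but be aware that the paper attributes the Lipton--Tarjan separator approach to the \emph{general} bound $\np_{\Delta,D}\le cD\Delta^{\lfloor D/2\rfloor}$, not to the specific constant $8\Delta+12$; the latter is presumably obtained in \cite{FHS95} by a direct argument. Your description of the ``main obstacle'' (getting a linear rather than quadratic bound on $|L_2|+|L_3|$) is correct in spirit, but as written it is only a plan: neither of your routes (i) or (ii) is fleshed out enough to see that the constant $8$ actually falls out, and saying ``pinning it to exactly $8\Delta+12$ is bookkeeping'' is not a proof. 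If you intend this as more than a pointer to the literature, you would need to actually carry out one of the two arguments.
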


The construction of planar graphs attaining the lower bound of \Cref{thm:previous_general_bound} given in \cite{FHS95}, is depicted in the left of \Cref{fig: lower-bound}. Note that, at the moment, this lower bound remains the best known for graphs with $\Delta\geq 5$. For smaller values of $\Delta$, it is known that $\np_{3,3}=12$~\cite{P96,TISHCHENKO2001} and $\np_{4,3}\geq 16$~\cite{Wiki}. 
However, we note that constructions that generalize in a natural way those attaining $\np_{3,3}$ or $\np_{4,3}$ only produce planar graphs of diameter 3 having $4\Delta+O(1)$ vertices.

\begin{figure}[!ht]
    \centering
    \includegraphics[scale=0.7]{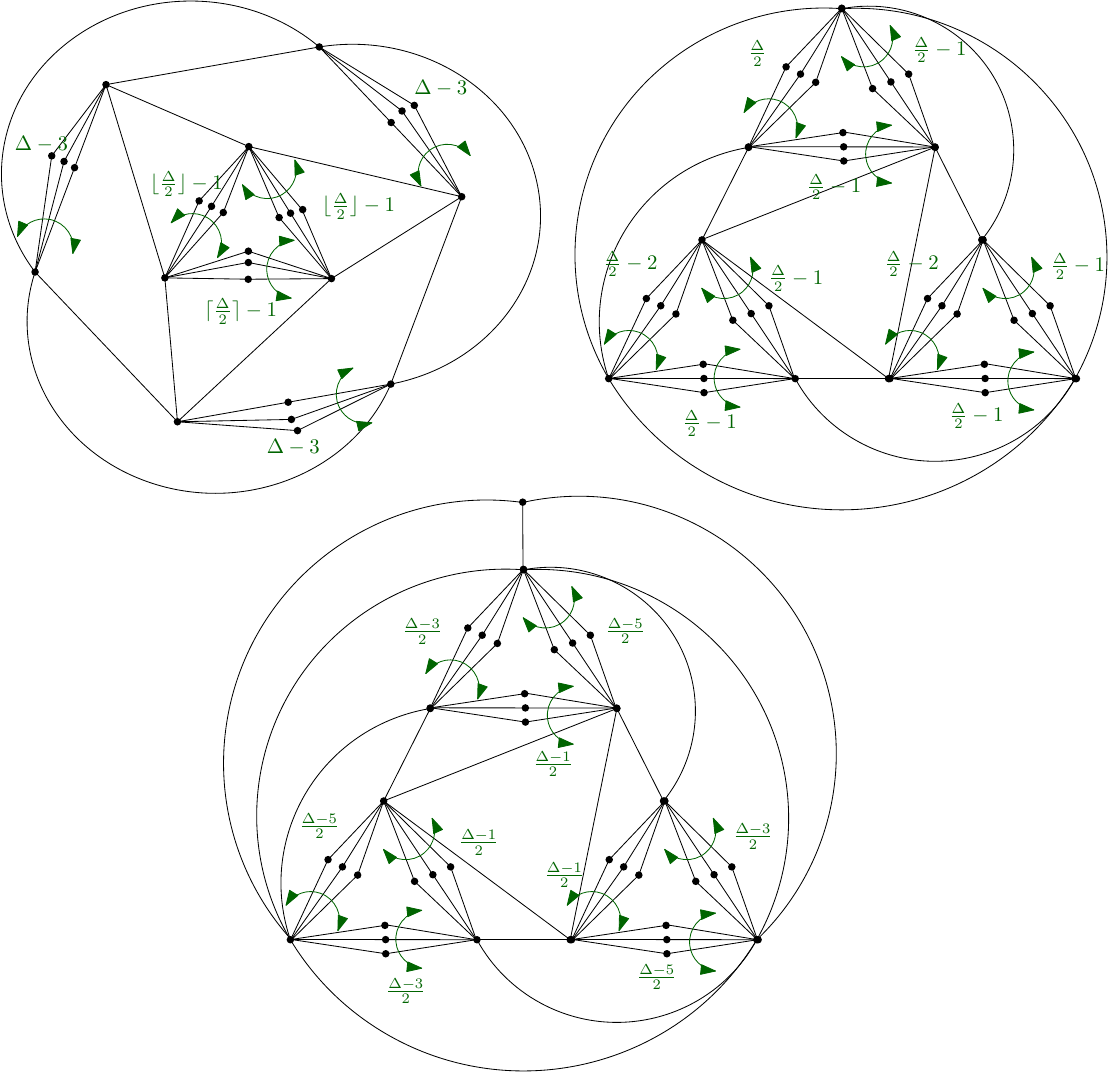}
    \caption{Constructions of planar graph with diameter $3$ and $\lfloor \tfrac92\Delta\rfloor -3$ vertices. The green values correspond to the number of twins of degree $2$ that are added. The construction on the top left is given in \cite{FHS95}. The construction on the top right also has order $\lfloor \tfrac92\Delta\rfloor -3$ when $\Delta$ is even, and the bottom construction has order $\lfloor \tfrac92\Delta\rfloor -3$ when $\Delta$ is odd.}
    \label{fig: lower-bound}
\end{figure}

Dorfling, Goddard and Henning \cite{DGH06} proved that every sufficiently large planar graph with diameter $3$ has a dominating set of size $6$, implying that when $\Delta$ is large enough, we have $\np_{\Delta,3}\leq 6\Delta + 6$. Moreover, we believe that using some results and techniques from \cite{DGH06}, one can improve this upper bound and derive a bound of the form $5\Delta + O(1)$. 

The degree diameter problem has also been studied on the classes of planar graphs having some restrictions on the possible lengths of facial cycles. In particular, it has been solved for the classes of planar quadrangulations of diameter at most $3$ \cite{DHS16} and of planar pentagulations of diameter at most $3$ \cite{DP24}. Moreover, in  \cite[Section 9]{DP24}, the author raises the question about planar triangulations of diameter at most $3$.
We also refer to \cite{Wiki} for more results on the degree diameter problem in planar graphs.
 
\paragraph*{Our results.}
Our main result states that the lower bound from \Cref{thm:previous_general_bound} is optimal, up to an additive constant.

\begin{theorem}
     \label{thm: main}
     For every $\Delta$, we have 
     $$\np_{\Delta, 3}\leq \frac{9}{2} \Delta + 9 + 39^3.$$ 
\end{theorem}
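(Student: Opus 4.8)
\emph{Proof sketch.} We follow the reduction to fractional matching announced in the abstract. First we dispose of small maximum degree: for $\Delta\le 39$ the Moore bound already gives $\np_{\Delta,3}\le M_{\Delta,3}\le M_{39,3}<39^3$, so the claimed inequality holds trivially, and we may henceforth assume that $\Delta$ is large (say $\Delta\ge 40$). Let $G$ be a planar graph with $\Delta(G)\le\Delta$, $\diam(G)\le 3$ and $n:=|V(G)|$ vertices (such a $G$ is necessarily connected); our goal is to show $n\le\frac92\Delta+9$. Fix a plane embedding of $G$; a natural starting point is to run a breadth-first search from a vertex $r$ of maximum degree and consider the layers $L_0=\{r\}$, $L_1=N(r)$, $L_2$, $L_3$, which partition $V(G)$ because $\diam(G)\le 3$.

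The core of the argument is to extract from $G$ a bounded set $W\subseteq V(G)$ -- a \emph{core} of at most $9$ ``essential'' vertices -- such that all but $\Oh(1)$ of the remaining vertices have a small neighbourhood contained in $W$, and any two of them with the same trace on $W$ are interchangeable (``twins''). Grouping $V(G)\setminus W$ by trace on $W$, each trace that actually occurs is attached to an edge of an auxiliary \emph{planar} graph $H$ on (a subset of) $W$: the number of vertices with a given trace is governed by a weight $x_e$ on the corresponding edge $e$, and the bound $\deg_G(w)\le\Delta$ at each $w\in W$ becomes, after dividing by $\Delta$, exactly the fractional-matching constraint $\sum_{e\ni w}x_e\le 1$. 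This gives $n\le|W|+\Delta\cdot\nuf(H)+\Oh(1)$. Moreover, the diameter-$3$ hypothesis forces the occurring traces to be pairwise ``compatible'' (roughly: the corresponding pairs of core vertices must be mutually close in $G$), which confines $H$ to the restricted class of planar graphs studied in the paper.

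It then remains to prove the self-contained combinatorial fact that $\nuf(H)\le\frac92$ for every planar $H$ in this class, with equality precisely for the graphs exhibited (which are exactly those matching the lower-bound construction of \Cref{fig: lower-bound}). By LP duality this is equivalent to building a fractional vertex cover of $H$ of total weight at most $\frac92$; one does so from a plane embedding via Euler's formula together with a discharging/weighting argument -- or, since $H$ has bounded size, by a finite case analysis over the possible cores -- and reads off the extremal graphs by following equality throughout. Substituting $\nuf(H)\le\frac92$ into the displayed inequality yields $n\le 9+\frac92\Delta+\Oh(1)\le\frac92\Delta+9+39^3$, absorbing the additive constant into the $39^3$ term (which, as noted, also covers $\Delta\le 39$).

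The main difficulty lies in the reduction of the second paragraph: identifying the correct bounded core $W$, proving that essentially all remaining vertices behave as twins over $W$, and accounting for the $\Oh(1)$ exceptional vertices and the edges inside $L_2\cup L_3$. This is where planarity and the diameter-$3$ constraint must be exploited in tandem, and where the value ``$9$'' first appears. By contrast, once the problem has been reduced to bounding $\nuf$ on the resulting planar class, the bound $\frac92$ and the description of the extremal configurations -- while technical -- form a bounded, finite problem.
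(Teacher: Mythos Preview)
Your sketch follows the right \emph{shape}---reduce to a fractional-matching instance, then bound it by $\tfrac92$---but the reduction you describe is not the one carried out in the paper, and as written it does not go through.

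The paper does not work with a fixed core $W$ of at most $9$ vertices together with ``traces'' on $W$. Instead it iteratively locates \emph{nice lanterns}: separating cycles of length $\le 6$ whose two hubs $\{u,v\}$ dominate the interior region (plus several technical conditions---short, chordless, hub-faithful, deep, width $\ge 6$---obtained by repeatedly refining a $39$-lantern). For each such lantern one removes all interior vertices and records, in an auxiliary planar graph $\Gamma$, the edge $uv$ together with (possibly) new pendant edges $ua$, $vb$. Each deleted vertex is charged $\tfrac{1}{\Delta}$ to one of these edges, and the degree bound at $u,v$ gives precisely the fractional-matching constraint. The crucial structural point is that the resulting set $R\subseteq E(\Gamma)$ is a \emph{neighbouring set of edges} (any two edges of $R$ are at distance $\le 1$ in $\Gamma$); this is proved by a careful case analysis exploiting the ``deep'' and ``hub-faithful'' properties of the lanterns. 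Once no $39$-lantern remains, the leftover graph is controlled using the domination bound $\gamma\le 9$ of Dorfling--Goddard--Henning together with the fact that vertices of eccentricity $3$ have degree $<39^3$; this is where both constants $9$ and $39^3$ arise.

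Your proposal, by contrast, takes $W$ to be a $9$-vertex dominating set and asserts that all but $O(1)$ remaining vertices are ``twins'' with a size-$2$ trace on $W$. This step is the gap: a vertex dominated by $W$ may be adjacent to a \emph{single} vertex of $W$, and there can be up to $\Delta$ such vertices per element of $W$---so the exceptional set is not $O(1)$ but potentially $\Theta(\Delta)$, and the naive count only gives $9\Delta+9$. The paper avoids this by introducing pendant edges (with a new degree-$1$ endpoint) to absorb vertices adjacent to only one hub; but then $\Gamma[R]$ is \emph{not} a graph on a bounded vertex set, and the bound $\mu^{\ast}(\Gamma[R])\le\tfrac92$ is a genuine structural theorem about neighbouring edge-sets in $K_5$-minor-free graphs (\Cref{thm: frac-intro}), not a finite case check. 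Your remark that ``since $H$ has bounded size'' one can do a finite case analysis therefore misses the real content of the $\tfrac92$ bound.
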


We also provide a new construction of planar graphs of diameter 3 with $\lfloor\frac{9}{2}\Delta\rfloor-3$ vertices for any $\Delta\geq 5$ (see \Cref{fig: lower-bound}). Note also that both graphs from \Cref{fig: lower-bound} can be triangulated, up to making the maximum degree increase by $1$.  Therefore, \Cref{thm: main} also partially answers the above mentioned question of~\cite[Section 9]{DP24} as it provide an upper bound which is optimal, up to an additive constant.

\paragraph*{Fractional matchings of neighbouring sets of edges.}
Our proof of \Cref{thm: main} relies on a reduction to a special instance of the fractional matching problem in planar graphs. If $G$ is a graph, we say that a subset $R$ of edges from $E(G)$ is a \emph{neighbouring set} if for every two edges $e,f\in R$, there exist respective endvertices $u,v$ of $e$ and $f$ such that $d_G(u,v)\leq 1$. For every subset $R\subseteq E(G)$, we let $G[R]$ denote the subgraph of $G$ induced by the set of edges $R$, that is, the vertices of $G[R]$ are exactly the set $V_R$ of endvertices of edges in $R$, and the edges of $G[R]$ are the edges from $R$. 
We show that for every neighbouring set $R$ of edges in a planar graph $G$, the subgraph $G[R]$ of $G$ induced by the edges of $R$ has fractional matching number $\mu^{\ast}$ at most $\tfrac{9}{2}$. In fact, we prove this result in more generality when $G$ is $K_5$-minor-free (and not necessarily planar).

\begin{restatable}{theorem}{fractionalMatching}
    \label{thm: frac-intro}
    Let $G$ be a $K_5$-minor-free graph and $R\subseteq E(G)$ a neighbouring set of edges. We have:
    $$\mu^{\ast}(G[R])\leq \frac{9}{2}.$$
\end{restatable}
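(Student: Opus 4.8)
I want to bound the fractional matching number $\mu^\ast(G[R])$ where $R$ is a neighbouring set of edges in a $K_5$-minor-free graph $G$. By LP duality, $\mu^\ast(H) = \tau^\ast(H)$, the fractional vertex cover number, so equivalently I must exhibit a fractional vertex cover of $G[R]$ of weight at most $\tfrac92$, or directly bound a fractional matching.
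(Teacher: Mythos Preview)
Your proposal is not a proof; it is only the opening observation. You correctly note that by LP duality $\mu^\ast(H)=\tau^\ast(H)$ for $H:=G[R]$, and that it therefore suffices to exhibit a fractional vertex cover of weight at most $\tfrac92$. But you have not exhibited one, nor indicated how to do so. Everything substantive remains to be done.

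The paper's argument proceeds as follows, and each step requires real work. First, one observes that $\mu(H)\le 4$, since contracting the five edges of a matching of size $5$ in a neighbouring set yields a $K_5$ minor. This already handles $\mu(H)\le 2$ via $\tau(H)\le 2\mu(H)\le 4$. For $\mu(H)=3$, one fixes a maximum matching $M$, analyses the structure of vertices outside $S_M$ (private edges and private triangles), and builds an explicit fractional cover of value $\tfrac92$ from that structure. The hard case is $\mu(H)=4$. Here the paper first proves four nontrivial forbidden-subgraph lemmas (\Cref{lem: H1,lem: H2,lem: H3,lem: H4}), showing that $G[R]$ cannot contain $2K_3+2K_2$, $C_5+2K_2$, $C_7+K_3$, or $C_9$; each of these is a genuine $K_5$-minor argument. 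These exclusions are then combined with an analysis of alternating paths relative to a carefully chosen maximum matching (\Cref{cl:alt path,cl: path collection}) to pin down enough structure that an explicit fractional vertex cover of value at most $\tfrac92$ can be written down and verified case by case.

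None of this is present in your proposal. In particular, you have not used the $K_5$-minor-freeness hypothesis anywhere beyond implicitly, and you have not used that $R$ is neighbouring. Without those, the statement is false (e.g.\ take $R$ to be a large matching in a graph where the edges are not pairwise close), so any valid proof must engage with both hypotheses in a substantial way.
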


Moreover, with a small amount of additional work, we can deduce from our proof a complete characterization of neighbouring sets of edges that attain the bound $\tfrac92$.

\begin{restatable}{theorem}{corollaryOfMatching}
    \label{cor: minimal_92}
    Let $G$ be a planar graph and $R\subseteq E(G)$ be a neighbouring set of edges. Then $\mu^{\ast}(G[R])=\tfrac92$ if and only if $G[R]$ contains a subgraph isomorphic to one of the following graphs: 
 $$3K_3, ~K_3+3K_2, ~C_7+K_2.$$
\end{restatable}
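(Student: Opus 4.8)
The plan is to prove the two implications separately, the forward one being a short consequence of \Cref{thm: frac-intro} and the backward one requiring a bit of structural work. For the ``if'' direction, I would first record that each of the three listed graphs has fractional matching number exactly $\tfrac92$: putting weight $\tfrac12$ on every edge of a triangle or of $C_7$, and weight $1$ on the edge of a $K_2$, gives feasible fractional matchings of total weight $\tfrac32$, $\tfrac72$ and $1$, and these are optimal since the dual fractional vertex covers (weights $\tfrac12$ everywhere on a triangle and on $C_7$, weight $1$ on one endpoint of a $K_2$) achieve the same value. Hence $\mu^\ast(3K_3)=\tfrac92$, $\mu^\ast(K_3+3K_2)=\tfrac32+3=\tfrac92$ and $\mu^\ast(C_7+K_2)=\tfrac72+1=\tfrac92$. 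Because $\mu^\ast$ is monotone under taking subgraphs (a fractional matching of a subgraph is one of the whole graph after extending by zeros), if $G[R]$ contains one of the three then $\mu^\ast(G[R])\ge\tfrac92$; and since $G$ is planar, hence $K_5$-minor-free, \Cref{thm: frac-intro} gives $\mu^\ast(G[R])\le\tfrac92$, so equality holds.

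For the ``only if'' direction, assume $\mu^\ast(G[R])=\tfrac92$. By the half-integrality of the fractional matching polytope, $G[R]$ admits an optimal fractional matching that is a vertex of that polytope, that is, one whose support is a subgraph $F\subseteq G[R]$ (with all its edges in $R$) that is a vertex-disjoint union of $m\ge0$ single edges and $k\ge0$ odd cycles $C_1,\dots,C_k$, with $m+\sum_{j}\tfrac{|C_j|}{2}=\tfrac92$, that is, $2m+\sum_j|C_j|=9$. Since each $|C_j|$ is odd and at least $3$, a one-line check forces $k$ odd and shows that, up to isomorphism, $F$ is one of $3K_3$, $K_3+3K_2$, $C_5+2K_2$, $C_7+K_2$, $C_9$. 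In the first, second and fourth cases $F$ is already the subgraph of $G[R]$ required by the statement, so it remains only to rule out $F\cong C_5+2K_2$ and $F\cong C_9$.

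I would deal with this via the following lemma: if $G$ is planar and $R\subseteq E(G)$ is a neighbouring set, then $G[R]$ has no subgraph isomorphic to $C_5+2K_2$ or to $C_9$. For $C_5+2K_2$, write the cycle as $v_1v_2v_3v_4v_5$ and the two extra edges as $ab$ and $xy$, all in $R$ and pairwise neighbouring. The five cycle edges are automatically pairwise neighbouring, so the substance of the hypothesis is that $N_G[\{a,b\}]$ and $N_G[\{x,y\}]$ each contain a vertex cover of the $C_5$ and that some edge of $G$ joins $\{a,b\}$ to $\{x,y\}$. Contracting the edges $ab$ and $xy$ to vertices $P$ and $Q$ yields $P\sim Q$, with each of $P,Q$ adjacent to a $3$-element vertex cover of the untouched $C_5$ (every vertex cover of $C_5$ contains a $3$-element one); and since any three vertices of $C_5$ are pairwise joined by internally disjoint paths inside $C_5$, contracting those paths turns a common such vertex cover into a triangle and exhibits $K_5$ as a minor of $G$ -- a contradiction. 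When the two vertex covers differ, a short case analysis on the (few) configurations again produces a $K_5$ minor. The case $F\cong C_9$ is handled in the same spirit: unlike for $C_5$, the edges of $C_9$ are not automatically pairwise neighbouring, and forcing them to be so requires so many additional adjacencies that a $K_5$ minor cannot be avoided in a planar graph. Given the lemma, the only surviving possibilities for $F$ are $3K_3$, $K_3+3K_2$, $C_7+K_2$, which finishes the proof.

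The step I expect to be the main obstacle is this exclusion lemma -- specifically, extracting the $K_5$ minor uniformly over all the ways the neighbouring witnesses can be distributed among the endpoints of the relevant edges, and in particular handling the $C_9$ case, where there are many pairs of far-apart edges to account for. This is presumably the ``small amount of additional work'' alluded to right after the statement, and I would expect it to demand a genuine (if short and elementary) case analysis rather than a single clean argument.
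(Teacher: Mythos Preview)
Your approach is correct and in fact cleaner than the paper's own proof. The paper does not use half-integrality; instead it revisits every case of the proof of \Cref{thm: frac-intro} and shows that whenever the fractional vertex cover constructed there has value exactly $\tfrac92$, one of the three listed subgraphs must appear --- this requires reproving analogues of \Cref{cl:alt path} and \Cref{cl: path collection} for $\mu(H)=3$ and a sequence of auxiliary claims. That route yields a slightly stronger conclusion (if $G[R]$ contains none of $3K_3$, $K_3+3K_2$, $C_7+K_2$ then in fact $\mu^\ast(G[R])\le 4$, not merely $<\tfrac92$) at the cost of a longer and more intricate case analysis. Your half-integrality argument bypasses all of this: once the support of an optimal half-integral fractional matching is identified, only the five graphs you list can occur, and two of them are dispatched immediately.

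Crucially, the exclusion lemma you flag as the ``main obstacle'' is already proved in the paper: \Cref{lem: H2} excludes $C_5+2K_2$ (denoted $H_2$ there) and \Cref{lem: H4} excludes $C_9$ (denoted $H_4$), both for arbitrary $K_5$-minor-free graphs. So your proof is essentially complete as written --- you only need to cite those two lemmas rather than reprove them.
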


Neighbouring sets of edges naturally appear when one tries to construct large graphs with diameter $3$: assume that $R$ is a neighbouring set of edges of some graph $G$. Note that as $R$ is neighbouring, the graph $G'$ obtained from $G[R]$ when adding all edges in $E(G)\setminus R$ connecting two different edges from $R$ has diameter $3$. Moreover, note that any graph obtained after adding to $G'$ any number of degree $2$ vertices attached to both endvertices of edges of $R$ still has diameter at most $3$. For example, the graph from \Cref{fig: lower-bound} (top left) can be obtained this way, when starting from the graph $K_3+3K_2$ which is the disjoint union of a triangle and a matching of size $3$ (to see that this graph is a neighbouring set of edges of a planar graph, see \Cref{fig: minimal_92}). If we want to construct large graphs with maximum degree at most $\Delta$ and diameter $3$, note that the number of vertices of degree $2$ that we can add using this technique is upper bounded by $\mu^{\ast}(G[R])\Delta$. From this perspective, \Cref{thm: frac-intro} implies that all planar graphs that can be constructed using this technique have order at most $\tfrac92\Delta$.

Our proof of \Cref{thm: main} shows in some way that every large planar graph with diameter at most $3$ is ``close'' to a graph that can be obtained from this construction.

\paragraph*{Overview of the proof of Theorem \ref{thm: main}.}
To prove Theorem \ref{thm: main}, we show that,
given a planar graph $G$ of diameter $3$, we can construct an auxiliary planar graph $\Gamma$ with a neighbouring set $R\subseteq E(\Gamma)$ of edges such that $|V(G)|\leq \mu^{\ast}(\Gamma[R])\cdot \Delta + c$ for some constant $c\leq 9+39^3$ (see Theorem \ref{thm: main1}). The strategy to construct $\Gamma$ is the following: we start setting $G_0:=\Gamma_0:=G$, $R_0:=\emptyset$, and at step $i\geq 1$,
we look for a cycle $C_i$ of size at most $6$ in $G$, such that:
\begin{itemize}
 \item each $C_i$ is a separating cycle in $G$, i.e., both faces of $C_i$ contain a vertex of $G$;
 \item there exist two distinct vertices $u_i,v_i\in V(C_i)$ such that the set $\sg{u_i,v_i}$ dominates one of the regions $F_i$ from $\mathbb R^2$ delimited by $C_i$. 
\end{itemize}
If at step $i$ we can find such a cycle $C_i$, then we consider the graphs $\Gip, \Gamip$ respectively obtained from $\Gi, \Gami$ after removing every vertex lying in the interior of the dominated face $F_i$ (note that in our proof, we will in fact need some additional hypothesis on $C_i$, as we will need to ensure that the graph $\Gip$ still has diameter $3$). 
Note that then, the cycle $C_i$ is facial in both $\Gip$ and $\Gamip$. We moreover add in $\Gamip$ a (potentially) new edge $u_iv_i$, and 
two new pendant edges $u_ia_i, v_ib_i$. We then let $R_{i+1}$ be the set obtained from $R_i$ after adding the edge $u_iv_i$, and potentially some of the edges from $\sg{u_ia_i, v_ib_i}$,
according to whether $u_i$ and $v_i$ had private neighbours from $\Gi$ in the region $F_{i}$ or not (see \Cref{fig: detailed-ex,fig: detailed-ex2}). At each step, we will define a fractional matching $\mu_i$ on $\Gamma_i[R_i]$, such that every vertex lying in the interior of $F_i$ contributes to $\tfrac{1}{\Delta(G)}$ in the total value of $\mu_i$. This way, we will ensure that the total number $|V(G)\setminus V(G_i)|$ of vertices that have been removed between step $1$ and step $i$ is at most $\mu^{\ast}(\Gamma_i[R_i])\cdot\Delta(G)$. We will moreover prove that the set $R_i$ we define is a neighbouring set of edges in $\Gami$. Eventually, we will show that if at some point, we cannot find anymore a cycle $C_i$ with the aforementioned properties, then, 
up to adding some additional set of pendant edges $R$ to $\Gamma_i$ (resulting in a graph $\Gamma$) and removing from $\Gi$ some well chosen set of vertices $X\subseteq V(\Gi)$, we can extend $\mu_i$ to a fractional matching $\mu$ of $\Gamma[R_i\cup R]$ such that each vertex from $X$ contributes to $\tfrac{1}{\Delta(G)}$ in the total value of $\mu$. In particular, it implies that 
$|(V(G)\setminus V(G_i))\uplus X|\leq \mu^*(\Gamma[R_i\cup R])$. 
To complete the proof, we will show that $R_i\cup R$ is a neighbouring set of edges in $\Gamma$, and that we can choose $X$ such that the graph $G_i- X$ has at most $9+39^3$ vertices. To show this last point, we will use a result of MacGillivray and Seyffarth \cite{MS96} (later improved by Dorfling, Goddard and Henning \cite[Theorem~3]{DGH06}) showing that planar graphs of diameter $3$ have bounded domination number. Combining the above results with \Cref{thm: frac-intro}, we then deduce a proof of \Cref{thm: main}.

We give an illustration of the different steps of our proof on two examples in \Cref{fig: detailed-ex,fig: detailed-ex2}. The example from \Cref{fig: detailed-ex2} shows a typical situation in which we cannot stop our proof after the first step (emptying the cycles $C_i$), and where we need to consider the additional set $R$ of pendant edges described above.

\begin{figure}[htb]
  \centering   
  \includegraphics[scale=0.9]{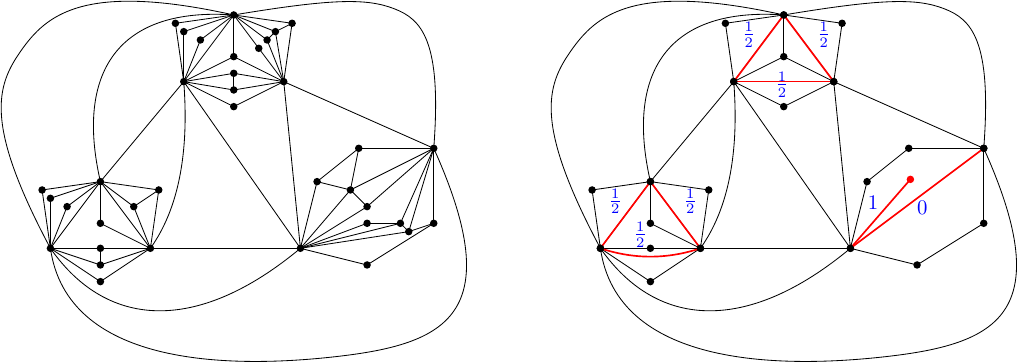}
  \caption{Left: A planar graph $G$ with diameter $3$. Right: The planar graph $\Gamma$ obtained after emptying iteratively some cycles of length at most $6$ that contain two vertices dominating one of the two faces of the plane they separate. We depicted in red the elements of $R$ that form a neighbouring set of edges in $\Gamma$, and in blue an optimal fractional matching of $\Gamma[R]$, with value $4\leq \tfrac92$.}   
  \label{fig: detailed-ex}
\end{figure} 

\begin{figure}[htb]
  \centering   
  \includegraphics[scale=0.8]{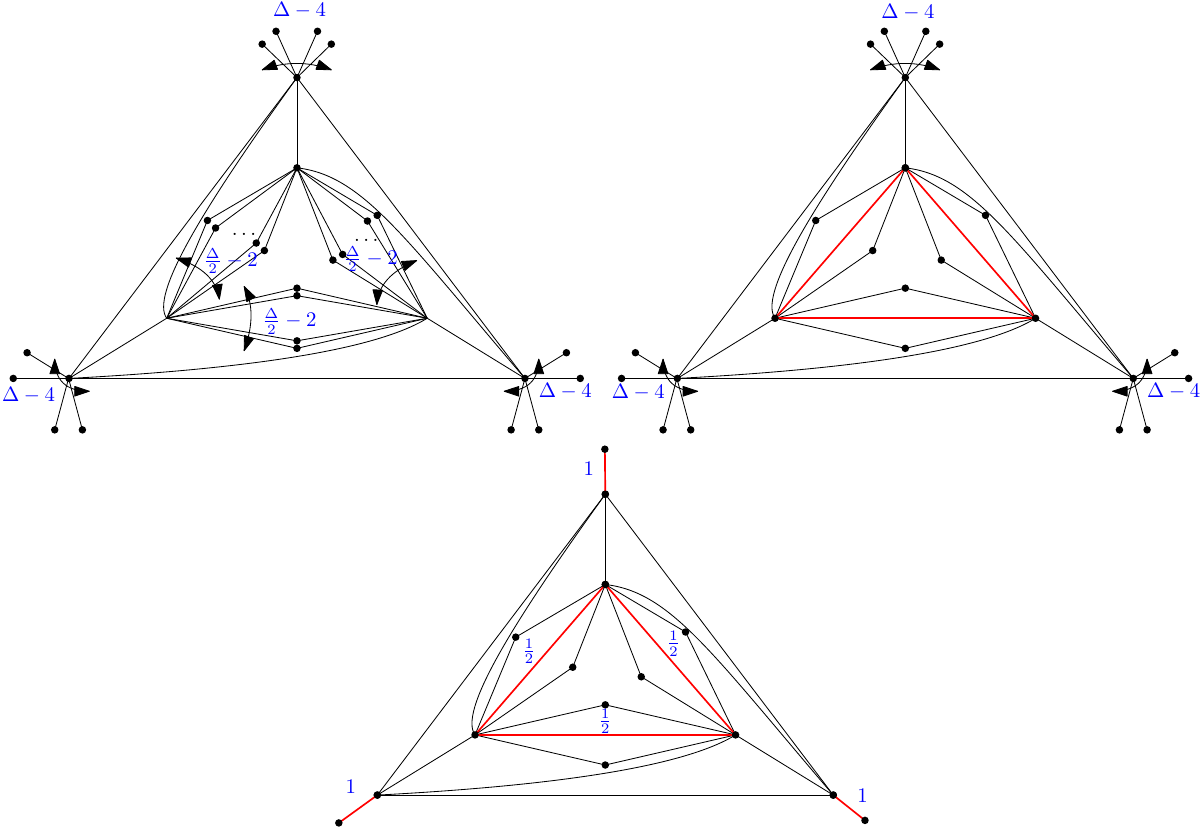}
  \caption{Top left: A planar graph $G$ with diameter $3$.
  Top right: The planar graph $\Gamma_i$ obtained after
  the first step of our proof, i.e., after emptying
  iteratively some cycles of size at most $6$ that contain
  two vertices dominating one of the two regions of the
  plane they separate. Bottom: The graph $\Gamma$ obtained
  after removing also vertices from $X$, and adding some
  more pendant edges to $\Gamma_i$. We depicted in red the
  elements of $R$ that form a neighbouring set of edges in
  $\Gamma$, and in blue an optimal fractional matching of 
  $\Gamma[R]$, with value $\tfrac92$.}   
  \label{fig: detailed-ex2}
\end{figure} 

\paragraph*{Structure of the paper.}

The sequence of cycles $(C_i)_i$ described above will be obtained by considering boundaries of some special subgraphs of $G$ called \emph{lanterns}, satisfying some specific properties. We present in \Cref{sec:empty_lanterns} a number of notions and useful properties about lanterns. The construction of the graph $\Gamma$, together with a proof of all the aforementioned properties is given in \Cref{sec: reduction}, which is dedicated to the proof of \Cref{thm: main1}. Eventually, we give a proof of Theorems \ref{thm: frac-intro} and \ref{cor: minimal_92} in \Cref{sec: fractional-matching}, whose combination with \Cref{thm: main1} immediately implies a proof of \Cref{thm: main}. Note that \Cref{sec: fractional-matching} is completely independent from the previous sections, and we believe that its content is of independent interest.

\section{Preliminaries}
\label{sec: def}

\subsection{Basic definitions and notations}
The graphs considered in this paper are simple and without loops. Moreover we will focus on planar graphs, that is, graphs which can be drawn on the plane without edge-crossings. The \textit{degree} of a vertex is the number of edges incident to it. Given a graph $G=(V,E)$, the \textit{maximum degree} of a graph $G$ is the maximum degree of its vertices and is denoted with $\Delta(G)$. When no ambiguity is possible, we will denote it by $\Delta$. The \textit{order} of $G$ corresponds to the number $|V(G)|$ of vertices of $G$.

A \emph{path} in $G$ is a sequence $u_0,u_1,\ldots, u_k$ of vertices such that for each $0\leq i<k$, $u_iu_{i+1}$ is an edge; $u_0$ and $u_k$ are called its \emph{extremities} while $u_1,\ldots,u_{k-1}$ are called its \emph{internal vertices}; its \emph{length} is $k$, that is, the length of a path corresponds to its number of edges. A path with extremities $x$ and $y$ will also be called a \emph{$xy$-path}.
We say that two paths are \emph{disjoint} (resp. \emph{internally disjoint}) if they have no common vertices (resp. internal vertices).

The \textit{distance} between two vertices $x$ and $y$ of $G$, denoted by $d_G(x,y)$, is minimum length (i.e. number of edges) of a shortest path between $x$ and $y$ in $G$. If there is no such path between $x$ and $y$, then we set $d_G(x,y):=\infty$. Similarly, given two sets $X,Y$ of vertices of $G$, the distance between $X$ and $Y$ in $G$ is $d_G(X,Y)=\min\{d_G(x,y) ; x\in X, y\in Y\}$.
The \textit{diameter} of $G$ is the maximum over the distances between two vertices of $G$. The \textit{eccentricity} of a vertex $x$ is the maximum over the distances between $x$ and another vertex of the graph, and the \textit{radius} of $G$ is the minimum over the eccentricities of its vertices.

A \textit{dominating set} of $G$ is a subset $D$ of $V(G)$ such that every vertex of $G$ is either in $D$ or adjacent to a vertex from $D$. The \textit{domination number}, denoted $\gamma(G)$  is the minimum cardinality of a dominating set of $G$.

Let $G$ be a graph. Given a subset $X$ of vertices of $G$, we let $G[X]$ denote the subgraph of $G$ induced by the vertices from $X$. For each $R\subseteq E(G)$, we also let $G[R]$ denote the edge-induced subgraph of $G$, that is, the subgraph obtained after removing all the edges from $G$ that do not belong to $R$, and all the vertices which are not incident to some edge of $R$. 
A set $R\subseteq E(G)$ of edges in $G$ is called \emph{neighbouring} if every two edges $e,e'$ from $R$ are at distance at most $1$ in $G$, that is, either $e$ and $e'$ share a common endvertex, or $e$ and $e'$ are both adjacent to some edge $f \in E(G)$.

If $v$ is a vertex of $G$, then we denote with $N_{G}(v)$ its neighbourhood in $G$. 
Given a graph $G$, and a set $X\subseteq V(G)$, we let $N_G(X):=\cup_{x\in X}N_G(x)$ denote the neighbourhood of $X$ in $G$.
With a slight abuse of language, we say that two sets of vertices $X,Y$ are adjacent in $G$ there exists $x\in X$ and $y\in Y$ such that $x$ and $y$ are adjacent in $G$. In particular, as we consider edges of $G$ as subsets of size $2$ of $V(G)$, we will also allow to say that two edges are adjacents.

A graph $H$ is a \emph{proper} subgraph of $G$ if $H$ is a subgraph of $G$ that is not isomorphic to $G$. We say that $H$ is an \emph{isometric subgraph} of $G$ if $H$ is a subgraph
of $G$, and moreover, for all $x,y\in V(H)$, we have $d_H(x,y)=d_G(x,y)$.

For every $k\geq 1$, we let $K_k$ denote the clique on $k$ vertices and $C_k$ denote the cycle on $k$ vertices. For every two graphs $G,H$, we let $G+H$ denote their disjoint union. For every $n\in \mathbb N\setminus \sg{0}$, and every graph $G$, we let $n G$ denote the graph obtained by taking a disjoint union of $n$ copies of $G$.

\subsection{Planar graphs}
In the remainder of the paper, we will always assume that all planar graphs we consider come with a fixed embedding, \ie we work on \emph{plane graphs}. A graph $H$ is a \emph{plane} subgraph of a plane graph $G$ if $H$ is obtained from $G$ by deleting some vertices and edges while keeping the same embedding. 

A \emph{face} of a planar graph $G$ is an arcwise connected component of the open set $\mathbb{R}^2\setminus G$, and is thus an open subset of $\mathbb{R}^2$. We denote by $\overline{F}$ the closure of a face $F$ in $\mathbb{R}^2$, and the \emph{boundary} of a face $F$ of $G$ is the set of vertices and edges of $G$ drawn in $\overline{F}\setminus F$. If the boundary of a face $F$ of $G$ is a cycle $C$ of $G$, then we call $C$ the \emph{cycle boundary} of $F$ in $G$. If a cycle $C$ is the cycle boundary of a face of $G$, then we say that $C$ is \emph{facial} in $G$. By abuse of vocabulary, we will say that a face $F$ contains a subgraph $H$ of $G$, if $H$ is drawn in $F$.

\begin{remark}
 \label{rem: cycle-dom} 
 For every plane graph $G$ of diameter at most $3$, every cycle $C$ of $G$ dominates one of its two faces, \ie one of the two arcwise connected components of $\mathbb{R}^2\setminus C$ is such that every vertex drawn inside it has at least one neighbour on $C$.
\end{remark}

\subsection{Fractional matchings and coverings}
\label{def: frac}

\paragraph*{Fractional matchings}
A \emph{matching} in a graph $G$ is a set of edges that do not pairwise share any endvertex. We denote with $\mu(G)$ the maximum size of a matching in $G$. A \emph{fractional matching} in a graph $G$ is a mapping $g: E(G)\to \mathbb R_{\geq 0}$ such that for each vertex $v\in V(G)$, we have 
$$\sum_{\substack{e\in E(G)\\ v\in e}}g(e)\leq 1.$$
The \emph{value} of $g$ is the sum $\sum_{e\in E(G)}g(e)$, and we denote with $\mu^{\ast}(G)$ the supremum over the values of all fractional matchings of $G$. As the fractional matching problem corresponds to the usual relaxation of the matching problem, we clearly have that for all graph, $\mu(G)\leq\mu^{\ast}(G)$. 

\paragraph*{Fractional vertex-covers}
A \emph{vertex-cover} in a graph $G$ is a set $X$ of vertices such that  
every edge is incident to at least one vertex of $X$. We denote with $\tau(G)$ the minimum size of a vertex cover in $G$. A \emph{fractional vertex-cover} in a graph $G$ is a mapping $h: V(G)\to \mathbb R_{\geq 0}$ such that for every edge $uv\in E(G)$, we have $h(u)+h(v)\geq 1$. 
The \emph{value} of $h$ is the sum $\sum_{v\in V(G)}h(v)$, and we denote with $\tau^{\ast}(G)$ the infimum over the values of all fractional vertex-covers of $G$. To simplify notation, for each set $X\subseteq V(G)$, we set $h(G):=\sum_{x\in X}h(x)$ (in particular the value of $h$ is $h(G)$).
As the fractional vertex-cover problem corresponds to the usual relaxation of the vertex cover problem, we have that for all graphs $\tau^{\ast}(G)\leq \tau(G)$. 

The fractional vertex-cover problem also corresponds to the dual of the fractional matching problem. Hence, a consequence of the Strong Duality Theorem in linear programming is that for every graph $G$, we have $\tau^{\ast}(G)=\mu^{\ast}(G)$.

\section{Lantern extraction}
\label{sec:empty_lanterns} 
In this section, we introduce vocabulary and  useful properties related to what we will call lanterns.

\paragraph{Lanterns}
A \emph{lantern} $L$ in a graph $G$ is a subgraph of $G$ containing two \emph{hubs}, that is, two distinct vertices $u,v\in V(G)$, together with a set of at least two pairwise internally disjoint $uv$-paths, 
called the \emph{axes} of $L$. The \emph{width} of $L$ is its number of
axes, and the \emph{length} of $L$ is the maximum length of an axis of $L$. A \emph{$k$-lantern} is a lantern of width $k$.
Given two lanterns $L,L'$, we say that $L'$ is a \emph{sublantern} of $L$ if it is a subgraph of $L$, and if moreover $L$ and $L'$ have the same hubs.

\begin{remark}
    \label{rem: theta}
    The term lantern was introduced in~\cite{GH02} and later reused in \cite{DGH06}. As we will need some results from these papers, we decided to stick to the vocabulary introduced there.
    Note that in structural graph theory, lanterns are also known as \emph{theta-graphs}.
\end{remark}

\paragraph{Dominating lanterns}
Note that for every $k\geq 2$, any $k$-lantern of a plane graph $G$ is a plane subgraph of $G$ with exactly $k$ faces, $k-1$ of them being bounded subsets of $\mathbb R^2$. In particular, if $L$ is a lantern with hubs $u,v$ and $F$ a face of $L$, such that the set of vertices of $G$ which do not belong to $F$ is dominated by $\sg{u,v}$, then we say that $L$ is a \emph{dominating lantern}. See \Cref{fig:dom-lantern}. In what follows, we will assume that every dominating lantern $L$ always comes with a fixed such special face $F$ (if there are multiple candidates for $F$, we just choose any of them). The face $F$ will be called the \emph{free face} of $L$, while its complementary face $\mathbb R^2\setminus \overline{F}$ will be called the \emph{interior} of $L$.
Note that if $L$ is a dominating lantern and if $L'$ is a sublantern of $L$ of width at least $2$, then $L'$ is also a dominating lantern, whose interior is included in the interior of $L$. We stress out that the interior of a lantern does not necessarily correspond to a bounded region of $\mathbb R^2$; this region will be bounded only when the free face is the external face of $L$ in the fixed embedding of $G$ (even though for simplicity, we will consider in all the figures given later that the free face is the unbounded one).

\begin{remark}
\label{rem: region}
    It is worth mentioning that the (topological) closure of the interior of a short dominating lantern corresponds exactly to the notion of \emph{region} of a plane graph, introduced in \cite{alber2004polynomial} in the context of parameterized algorithms for domination in planar graphs. 
\end{remark}

\begin{figure}[htb]
  \centering   
  \includegraphics[scale=1]{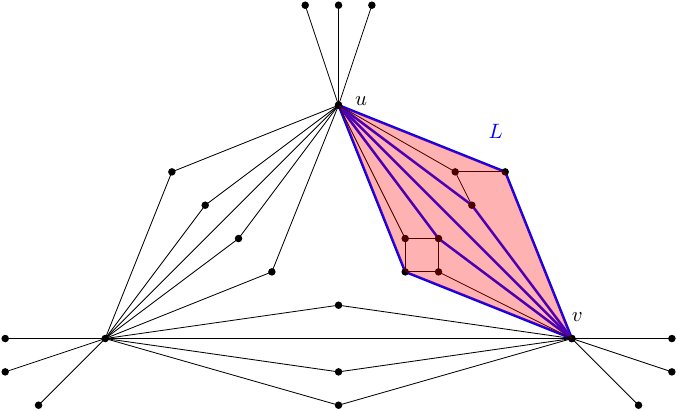}
  \caption{A short dominating $5$-lantern $L$ in a plane graph of diameter $3$. The axes of $L$ are colored with blue. In this embedding, the free face of $L$ is its unbounded face, and the interior of $L$ is colored with light red.}
  \label{fig:dom-lantern}
\end{figure}

In what follows we prove a series of lemmas allowing us to find lanterns with useful properties in bigger lanterns.

\begin{lemma}
 \label{lem: new-dom-lanterns}
 If $G$ is a plane graph of diameter at most $3$, then, for $k\geq 8$, every $(k+2)$-lantern of $G$ contains a dominating sublantern of width $k$.  
\end{lemma}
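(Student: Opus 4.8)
The plan is to exploit the cyclic structure of a $(k+2)$-lantern. Fix such a lantern $L$ with hubs $u,v$, and list its $k+2$ axes $P_1,\dots,P_{k+2}$ in the rotation order around $u$, so that the faces $F_1,\dots,F_{k+2}$ of $L$ are arranged cyclically with $F_i$ lying between $P_i$ and $P_{i+1}$ (all indices modulo $k+2$); recall $k+2\ge 10$. I will show that every vertex of $G$ that is \emph{not} dominated by $\{u,v\}$ is drawn inside one of three consecutive faces $F_{c-1},F_c,F_{c+1}$. The desired sublantern is then obtained by deleting the two axes $P_c,P_{c+1}$ separating these faces: the result $L'$ is a subgraph of $L$ with the same hubs and with $k\ (\ge 2)$ axes, hence a sublantern of width $k$, and the two deleted axes together with $F_{c-1},F_c,F_{c+1}$ merge into a single face $F^\ast$ of $L'$; declaring $F^\ast$ to be the free face, every vertex of $G$ outside $F^\ast$ is dominated by $\{u,v\}$, so $L'$ is a dominating sublantern of width $k$.

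First I would record the planarity tool. Since $L$ is a plane subgraph of the plane graph $G$, every edge of $G$ either lies on an axis of $L$ or is drawn inside the closure $\overline{F_i}$ of a unique face $F_i$ of $L$. Consequently, writing $\mathrm{reg}(x)$ for the set of faces of $L$ whose closure contains a vertex $x$, we have $\mathrm{reg}(x)=\{1,\dots,k+2\}$ if $x\in\{u,v\}$, while otherwise $\mathrm{reg}(x)$ is either a single face $\{i\}$ (when $x$ is drawn inside $F_i$) or a pair of consecutive faces $\{i-1,i\}$ (when $x$ is an internal vertex of $P_i$). In particular, every neighbour of a vertex drawn inside $F_i$ lies in $\overline{F_i}$, and every neighbour of an internal vertex of $P_i$ lies in $\overline{F_{i-1}}\cup\overline{F_i}$.

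The heart of the proof is the confinement step. Suppose $w,w'$ are vertices of $G$ drawn inside faces $F_i,F_j$ respectively, neither dominated by $\{u,v\}$; I claim the cyclic distance between $i$ and $j$ is at most $2$. Take a shortest $ww'$-path; its length $t$ satisfies $t\le 3$ by hypothesis, and $t\ge 2$ since a single edge $ww'$ would lie in one $\overline{F_m}$, forcing $i=m=j$. The neighbour of $w$ on this path is not a hub (as $w$ is not adjacent to $u$ or $v$) and lies in $\overline{F_i}$, so its region is contained in $\{i-1,i,i+1\}$ and contains $i$; symmetrically for the neighbour of $w'$. If $t=2$ these two neighbours coincide, lie in $\overline{F_i}\cap\overline{F_j}$ and are not hubs, so $F_i$ and $F_j$ share an axis and $|i-j|=1$; if $t=3$ the middle edge lies in a single $\overline{F_m}$ with $m$ in the (hence nonempty) intersection $\{i-1,i,i+1\}\cap\{j-1,j,j+1\}$, so $|i-j|\le 2$. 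Either way the cyclic distance is at most $2$, and since $k+2\ge 6$ this forces all faces of $L$ containing a vertex not dominated by $\{u,v\}$ to lie among three consecutive faces $F_{c-1},F_c,F_{c+1}$. Deleting $P_c$ and $P_{c+1}$ and finishing as in the first paragraph then completes the argument, provided that apart from vertices drawn inside faces there is nothing else to control.

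The main obstacle is precisely this last caveat: internal vertices of the surviving axes $P_m$ ($m\notin\{c,c+1\}$) must also be dominated by $\{u,v\}$ — long axes being the dangerous case — and it is here that the hypothesis $k\ge 8$ is genuinely used. I would handle it with the same circle of ideas, now invoking \Cref{rem: cycle-dom} applied to cycles of the form $P_a\cup P_b$: if such a cycle fails to dominate the side carrying the axes $P_{a+1},\dots,P_{b-1}$ then, by the planarity tool, every axis strictly on the other side except $P_{a-1}$ and $P_{b+1}$ must have all of its internal vertices dominated by $\{u,v\}$; hence whenever at least four axes lie on that other side, the mere existence of one internal vertex there not dominated by $\{u,v\}$ forces $P_a\cup P_b$ to dominate the near side instead. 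Feeding this back — rerunning the shortest-path argument between a face vertex and an internal axis vertex, and between two internal axis vertices — confines \emph{every} vertex of $G$ not dominated by $\{u,v\}$, whether drawn in a face or lying on an axis, to the same window of three consecutive faces and the two axes between them, which is exactly what the deletion in the first paragraph needs. The routine but somewhat lengthy planar case analysis required to make this "feeding back" precise is, I expect, the only real work in the proof.
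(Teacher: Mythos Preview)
Your proposal follows the same strategy as the paper's proof: confine the set $S$ of vertices not dominated by $\{u,v\}$ to a short cyclic window of faces of $L$, then delete two axes so that the merged face contains all of $S$ and serves as the free face of a width-$k$ dominating sublantern. The paper does this in one stroke, without separating face-vertices from axis-vertices: it simply observes that any two elements of $S$ are joined in $G$ by a path of length at most~$3$ avoiding both hubs, and that such a path can meet at most four consecutive axes (equivalently, lies in the closures of three consecutive faces). Since $k+2\ge 10$, this pairwise confinement forces all of $S$ into a single such window, and removing two axes yields the desired sublantern.

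Your extra worry about internal vertices of the surviving axes --- and the proposed detour through \Cref{rem: cycle-dom} with a ``lengthy planar case analysis'' --- is therefore overkill. The short-path argument you already gave for face-vertices applies verbatim when one or both endpoints lie on an axis: your own planarity tool tells you that an internal vertex of $P_i$ has $\mathrm{reg}=\{i-1,i\}$, so the same chaining of regions along the path works, and the window containing all of $S$ absorbs the axis-vertices just as it does the face-vertices. In short, the paper's proof is your first three paragraphs, applied uniformly to all of $S$; the fourth paragraph is not needed.
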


\begin{proof} 
 Let $k\geq 8$ and $L$ be a $(k+2)$-lantern in $G$ with hubs $u$ and $v$, and set $k' := k+2$. 
 We denote the axes of $L$ with $P_1, \ldots, P_{k'}$ so that, for every $i\in \sg{1, \ldots, k'}$, $P_i\cup P_{i+1}$ is a facial cycle of $L$, and denote its associated face with $F_i$ (indices are taken modulo $k'$). For each $i\in \sg{1,\ldots, k'}$, we say that the paths $P_i$ and $P_{i+1}$ are consecutive.

 Denote with $S$ the set of vertices of $G$ at distance at least 2 from both hubs of $L$. We let $I$ denote the set of indices $i\in \sg{1, \ldots, k'}$ such that $S$ intersects the face $F_i$. Note that if $|I|\leq 1$, then $L$ is a dominating lantern, thus we may assume that $|I|\geq 2$. As $G$ has diameter at most $3$, every two elements from $S$ are connected in $G$ by a path of length at most $3$, which, by definition of $S$ cannot contain $u$ or $v$. In particular, such a path can only intersect at most $4$ different paths $P_i$, which moreover must be consecutive. As we assumed that $k\geq 8$, this implies that 
 $S$ can only intersect the boundaries of at most $3$ consecutive faces $F_i$. We assume without loss of generality that such faces are included in $\sg{F_k,F_{k+1},F_{k+2}}$. In particular, note that the sublantern $L':=P_1\cup \cdots\cup P_{k}$ is a dominating lantern, whose interior is delimited by the cycle $P_1\cup P_k$. 
 \end{proof}

 Note that the result of \Cref{lem: new-dom-lanterns} is optimal in the sense that the lower bound $k\geq 8$ cannot be decreased, as shown in ~\Cref{fig:9lantern6sublantern}.

 \begin{figure}[!ht]
     \centering
     \includegraphics[scale=0.9]{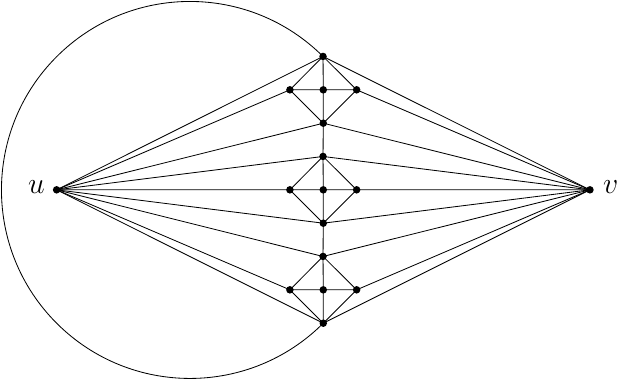}
     \caption{A 9-lantern containing no dominating sublantern of width 7.}
     \label{fig:9lantern6sublantern}
 \end{figure}

\paragraph{Chordless lanterns}
We say that an axis of a lantern is \emph{chordless} if it contains no edge between non-consecutive vertices, except possibly between the hubs of the lantern.
We say a lantern is chordless if all its axes are chordless.

\paragraph{Short lanterns}
We call a lantern \emph{short} if it has length at most $3$.
Note that a sublantern of a short (resp. chordless) lantern is also short (resp. chordless).

\begin{lemma}
 \label{lem: short-lanterns}
 If $G$ is a plane graph of diameter at most $3$, then, for every $k\geq 4$, if $G$ has a dominating $(k+2)$-lantern $L$, then it also contains a dominating short chordless lantern $L'$ of width $k$ having the same hubs as $L$, whose interior is included in the interior of $L$ and such that $V(L')\subseteq V(L)$.  
\end{lemma}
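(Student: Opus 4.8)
The plan is to produce a dominating chordless lantern of width $k$ and then to observe that shortness comes for free. The fact I would rely on is this: in any dominating lantern $M$ of $G$ with hubs $u,v$, every chordless axis $Q$ has length at most $3$. To see it, write $Q=u=w_0,w_1,\dots,w_{m-1},w_m=v$ and suppose $m\ge 4$; then $w_2$ is an internal vertex of $Q$, hence a vertex of $M$ not drawn in the free face of $M$, so $w_2$ is dominated by $\{u,v\}$, and since $w_2\notin\{u,v\}$ it is adjacent to $u$ or to $v$. But for $m\ge 4$ the vertex $w_2$ is non-consecutive on $Q$ to both $w_0=u$ and $w_m=v$, while neither $\{w_0,w_2\}$ nor $\{w_2,w_m\}$ equals $\{u,v\}$; so one of $w_0w_2$, $w_2w_m$ would be a chord of $Q$, contradicting chordlessness. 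Hence $m\le 3$. So it is enough to build a \emph{dominating chordless} lantern $L'$ of width $k$, with the same hubs as $L$, with $V(L')\subseteq V(L)$, and with interior contained in the interior of $L$.

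For the construction I would proceed as follows. Let $u,v$ be the hubs of $L$ and $P_1,\dots,P_{k+2}$ its axes, labelled so that (indices modulo $k+2$) each $P_i\cup P_{i+1}$ is a facial cycle of $L$ bounding a face $F_i$, and so that the free face $F$ of $L$ is $F_{k+2}$; write $D:=\mathbb R^2\setminus F$ for the closure of the interior of $L$, and note that $\overline{F_i}\subseteq D$ for all $i\in\{1,\dots,k+1\}$ and $P_i\subseteq D$ for all $i$. For each $i\in\{2,\dots,k+1\}$ I would turn $P_i$ into a $uv$-path $Q_i$ by repeatedly short-cutting: while the current path contains two non-consecutive vertices $x,y$ with $xy\in E(G)$ and $\{x,y\}\neq\{u,v\}$, replace the subpath between $x$ and $y$ by the single edge $xy$. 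This strictly decreases the length at each step, so it terminates at a chordless $uv$-path $Q_i$ with $V(Q_i)\subseteq V(P_i)$, each of whose edges lies in $E(P_i)$ or is a chord of $P_i$ in $G$. Every chord $xy$ of $P_i$ has an endpoint, say $x$, that is an internal vertex of $P_i$ (since $\{x,y\}\neq\{u,v\}$), and such a vertex lies on the boundary of only the two faces $F_{i-1},F_i$ of $L$; hence $xy$ is drawn inside $\overline{F_{i-1}}\cup\overline{F_i}$. As $i\in\{2,\dots,k+1\}$, both these faces differ from $F$, so the whole drawing of $Q_i$ lies inside $\overline{F_{i-1}}\cup P_i\cup\overline{F_i}\subseteq D$.

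Then I would set $L':=Q_2\cup\cdots\cup Q_{k+1}$, a union of $k$ paths, and check it has the required properties. For $i\neq j$ one has $V(Q_i)\cap V(Q_j)\subseteq V(P_i)\cap V(P_j)=\{u,v\}$, so $Q_2,\dots,Q_{k+1}$ are pairwise internally disjoint; they are also pairwise distinct, since $Q_i$ can be the single edge $uv$ only when $P_i$ is (short-cutting never introduces the edge $uv$) and at most one axis of $L$ is that edge. Being a subgraph of the plane graph $G$, $L'$ is therefore a plane lantern of width $k$ with hubs $u,v$; it is chordless, and $V(L')\subseteq\bigcup_{i=2}^{k+1}V(P_i)\subseteq V(L)$. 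Since $L'$ is drawn inside $D$ while $F=\mathbb R^2\setminus D$ is a connected region disjoint from $L'$, the region $F$ lies inside a single face $\widehat F$ of $L'$, which I would take as the free face of $L'$; as $F\subseteq\widehat F$, every vertex of $G$ not drawn in $\widehat F$ is not drawn in $F$ and is therefore dominated by $\{u,v\}$, so $L'$ is a dominating lantern, and its interior $\mathbb R^2\setminus\overline{\widehat F}$ is contained in $\mathbb R^2\setminus\overline F$, the interior of $L$. Finally, the first paragraph gives that $L'$ is short, which completes the proof.

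The conceptual step is the fact in the first paragraph; I expect the only real work to be the topological bookkeeping of the last two paragraphs, and in particular the point that one must discard precisely the two axes $P_1,P_{k+2}$ bounding the free face — that choice is exactly what keeps the short-cutting edges of every retained axis inside the closure of the interior of $L$, so that $L'$ still lies in $D$.
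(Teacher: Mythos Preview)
Your proof is correct and follows essentially the same approach as the paper: shortcut the $k$ ``inner'' axes $P_2,\dots,P_{k+1}$ to chordless paths, observe that the resulting width-$k$ lantern is still dominating, and deduce shortness from the fact that a chordless axis in a dominating lantern has length at most $3$. The only organisational difference is that the paper keeps $P_1$ and $P_{k+2}$ in place throughout the shortcutting (so the intermediate object is always a dominating $(k+2)$-lantern with the same free face $F$) and only drops them at the very end, whereas you discard them first and compensate with an explicit topological argument showing that all chords of the retained axes lie in $\overline{F_{i-1}}\cup\overline{F_i}\subseteq D$; your version is in fact more careful on this point than the paper's.
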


\begin{proof}
 Let $k':=k+2$ and $L$ be a dominating $k'$-lantern of $G$ with hubs $u$ and $v$ and axes $P_1, \ldots, P_{k'}$ such that for every $i\in \sg{1, \ldots, k'}$, $P_i\cup P_{i+1}$ is a facial cycle of $L$ (indices are taken modulo $k'$). Denote by $F$ the face of $L$ with boundary $C:=P_1\cup P_{k'}$, and assume without loss of generality that $F$ is the free face of $L$. 

 Consider an axis $P_i=(v_0:=u, v_1, v_2, \ldots, v_k:=v)$ of length at least $3$, for some $i\in \sg{2, \ldots, k+1}$. Suppose that $P_i$ is not chordless, i.e. that there exist $j,j'$ such that $j'>j+1$, $v_j$ and $v_{j'}$ are adjacent and $(j,j') \neq (0,k)$. Since $L$ is a dominating lantern, all the vertices between $v_j$ and $v_{j'}$ are adjacent to $u$ or $v$.
 Then, if we replace $P_i$ by 
 $(v_0,\ldots v_j, v_{j'}, \ldots, v_k)$ in $L$, we still obtain a dominating $k'$-lantern with free face $F$.
 
 Let us apply this operation as long as it is possible on each axis other than $P_1$ and $P_k$: we denote $L'$ the resulting dominating $k'$-lantern with free face $F$, and for each $i$, we denote by $P'_i$ the axis replacing $P_i$. Then for each $i\in \sg{2, \ldots, k+1}$, $P'_i$ is chordless and has length at most $3$. Indeed, if $P'_i$ has length at least $4$, we note $P_i=(u, v_1, v_2,v_3, \ldots, v_\ell:=v)$, then $v_2$ is adjacent to either $u$ or $v$, and we can apply the operation again. 
 In particular, the sublantern $L''$ obtained from $L'$ after taking the union of all the axes different from $P_1$ and $P_k$ forms a short chordless dominating lantern of width $k$, with hubs $u,v$, and whose interior is included in the interior of $L$. 
\end{proof}

\paragraph{Hub-faithful lanterns }
A lantern $L$ of a planar graph $G$ is called \emph{hub-faithful} if it is dominating, and moreover there exists a shortest path in $G$ between its two hubs that does not intersect the interior of $L$. In particular, if the two hubs of $L$ are adjacent, then the edge between them must be drawn in the closure of the free face of $L$.

\begin{lemma}
\label{lem: emptying}
 Let $G$ be a plane graph of diameter at most $3$, $L$ be a short hub-faithful $k$-lantern with $k\geq 4$, and $F$ be the interior of $L$. Denote by $G'$ the induced plane subgraph of $G$ obtained after removing every vertex drawn in the interior of $F$.
 Then, $G'$ is an isometric subgraph of $G$.
 Note that in particular, every sublantern of a hub-faithful lantern is also hub-faithful.
\end{lemma}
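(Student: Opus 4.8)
The plan is to prove the isometry directly. Since $G'$ is a subgraph of $G$ we have $d_{G'}(x,y)\geq d_G(x,y)$ for all $x,y\in V(G')$, so it suffices to show that every shortest $xy$-path of $G$ can be replaced by an $xy$-walk in $G'$ of the same length. Write $u,v$ for the hubs of $L$ and $P_1,\dots,P_k$ for its axes, indexed cyclically so that the free face of $L$ is bounded by the cycle $C:=P_1\cup P_k$; since $L$ is short, $P_1$ and $P_k$ are $uv$-paths of length at most $3$ lying on $C$ and hence in $G'$ (in particular $d_G(u,v)\leq 3$), and $V(G')$ consists exactly of the vertices of $G$ drawn on $C$ or strictly inside the free face.

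I would first record two elementary facts. \textbf{(a)} Every deleted vertex $w$ is adjacent in $G$ to $u$ or to $v$: since $w$ lies in the interior $F$ it is not drawn in the free face, so the dominating property of $L$ makes $w$ equal or adjacent to $u$ or $v$, and $w\neq u,v$ as the hubs survive. \textbf{(b)} If a surviving vertex $z$ is adjacent in $G$ to a deleted vertex $w$, then $z$ lies on $C$: otherwise $z$ is drawn strictly inside the free face, a region whose topological boundary is $C$, while $w$ lies outside the closure of the free face, so the edge $zw$ would have to meet $C$ in one of its interior points --- impossible in a plane graph, $C$ being a union of vertices and edges of $G$. Finally, hub-faithfulness provides a shortest $uv$-path of $G$ avoiding the interior $F$, hence lying in $G'$, so $d_{G'}(u,v)=d_G(u,v)$.

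The heart of the argument is the local claim: if $x,y\in V(G')$ have a common neighbour $w\notin V(G')$ in $G$, then $d_{G'}(x,y)\leq 2$. By (b), $x$ and $y$ lie on $C$. One then locates $w$ in the plane graph $L$: either $w$ is drawn strictly inside a face of $L$ other than the free face, or $w$ is an internal vertex of some axis $P_j$ with $2\leq j\leq k-1$ (a hub, or an internal vertex of $P_1$ or $P_k$, would survive). In both cases planarity confines every $G$-neighbour of $w$ --- in particular $x$ and $y$ --- to a union of at most three consecutive axes of $L$ that does not contain both $P_1$ and $P_k$ (it is here that the hypothesis $k\geq 4$ is used). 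Since any axis $P_m$ with $m\notin\{1,k\}$ meets $C=P_1\cup P_k$ only in $\{u,v\}$, it follows that $x$ and $y$ lie together on $P_1$, together on $P_k$, or both in $\{u,v\}$. In the first two cases $x,y$ are joined in $G'$ by a sub-path of that axis, of length at most $3$; and if that length is $3$ then the axis has length exactly $3$, forcing $\{x,y\}=\{u,v\}$. In the remaining case $\{x,y\}=\{u,v\}$, the vertex $w$ witnesses $d_G(u,v)\leq 2$, whence $d_{G'}(u,v)=d_G(u,v)\leq 2$. I expect this claim to be the main obstacle, being the point at which $k\geq 4$, shortness, and hub-faithfulness all come in.

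With the claim in hand the rest is a short case analysis. Fix $x,y\in V(G')$ and a shortest $xy$-path $Q=x_0x_1\cdots x_d$ of $G$ with $d:=d_G(x,y)\leq 3$, so $Q$ has at most two internal vertices. If neither is deleted, $Q\subseteq G'$. If $d\leq 2$ and the middle vertex is deleted, the claim gives $d_{G'}(x,y)\leq 2=d$. If $d=3$ and exactly one internal vertex is deleted, say $x_1$ (the case $x_2$ being symmetric), then $x_2$ survives, so $x_2x_3\in E(G')$, and the claim applied to $x_0$ and $x_2$ gives $d_{G'}(x_0,x_2)\leq 2$, whence $d_{G'}(x,y)\leq 3=d$. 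If $d=3$ and both $x_1,x_2$ are deleted, then by (b) both $x_0$ and $x_3$ lie on $C$; since each vertex $z$ of $C$ lies on one of the axes $P_1,P_k$, of length at most $3$, we have $d_{G'}(z,u)+d_{G'}(z,v)\leq 3$, so $\big(d_{G'}(x_0,u)+d_{G'}(x_0,v)\big)+\big(d_{G'}(x_3,u)+d_{G'}(x_3,v)\big)\leq 6$, hence $\min\{d_{G'}(x_0,u)+d_{G'}(u,x_3),\,d_{G'}(x_0,v)+d_{G'}(v,x_3)\}\leq 3$ and $d_{G'}(x,y)\leq 3=d$. This proves that $G'$ is isometric in $G$. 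The last assertion is then immediate: a sublantern $L'$ of $L$ has the same hubs and, being a sublantern of width at least $2$ of the dominating lantern $L$, is itself dominating with interior contained in that of $L$; thus the shortest $uv$-path witnessing hub-faithfulness of $L$ also avoids the interior of $L'$, so $L'$ is hub-faithful.
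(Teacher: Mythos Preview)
Your proof is correct and rests on the same underlying ideas as the paper's: the boundary cycle $C=P_1\cup P_k$ separates the deleted vertices from the rest of $G'$, shortness bounds distances along $C$, hub-faithfulness handles the pair $\{u,v\}$, and the hypothesis $k\geq 4$ guarantees that a deleted vertex cannot be adjacent simultaneously to an internal vertex of $P_1$ and an internal vertex of $P_k$.

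The organization differs slightly. The paper first establishes that $d_G(x,y)=d_{G'}(x,y)$ for all $x,y\in V(C)$ (treating separately the cases $x,y\in V(P_1)$, $x,y\in V(P_k)$, and $x,y$ internal to different axes), and then invokes the separator property of $C$ to extend this to all of $V(G')$ in one stroke: any shortest $xy$-path in $G$ that leaves $G'$ must enter and exit through $C$, and the detour through the interior can be replaced by a path in $G'$ between the entry and exit points of no greater length. You instead prove a local replacement claim (two surviving vertices with a common deleted neighbour are at distance at most $2$ in $G'$) and then run an explicit case analysis on the number of deleted internal vertices of a shortest path. Your argument is more self-contained and makes the role of $k\geq 4$ very explicit; the paper's is terser but leaves the final separator step to the reader. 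Both are equally valid executions of the same plan.
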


\begin{proof}
 We let $P, Q$ denote the two axes in $L$ such that the cycle $C:=P\cup Q$ forms the boundary of $F$. As both $P$ and $Q$ have length at most $3$, and as $L$ is hub-faithful it is not hard to check that for every pair of vertices $x,y\in V(P)$, we have $d_G(x,y)=d_{G'}(x,y)$. The same also holds for every pair of vertices $x,y\in V(Q)$. 
 Moreover, note that as $k\geq 4$, every path between an internal vertex of $P$ and an internal vertex of $Q$ of length at most $2$ must belong to $G'$. As $L$ is short, $C$ is a cycle of order at most $6$, so it implies that for every two vertices $x,y\in V(C)$, we have 
 $d_G(x,y)=d_{G'}(x,y)$. As $C$ separates $V(G')\setminus V(C)$ from $V(G)\setminus V(G')$, we can then conclude that for every two vertices $x,y\in V(G')$, we have $d_G(x,y)=d_{G'}(x,y)$, so $G'$ is an isometric subgraph of $G$, as desired.
\end{proof}

\begin{lemma}
 \label{lem: faithful}
 Let $G$ be a plane graph of diameter at most $3$, $k\geq 2$ and $L$ be a short chordless dominating $(2k-1)$-lantern of $G$. Then $L$ contains a sublantern of width $k$ which is hub-faithful.
\end{lemma}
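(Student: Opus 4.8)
The plan is to fix a shortest path $R$ between the two hubs $u,v$ of $L$ in $G$, to show that $R$ lies inside the closure of a single face of $L$, and then to pick among the $2k-1$ axes a block of $k$ consecutive ones whose sublantern has that face on its free side. Write $P_1,\dots,P_{2k-1}$ for the axes of $L$ in cyclic order, so that $P_i\cup P_{i+1}$ bounds a face $F_i$ of $L$ (indices modulo $2k-1$), and assume $F_{2k-1}$ is the free face of $L$, so that the interior of $L$ equals $\mathbb R^2\setminus\overline{F_{2k-1}}$. Since $L$ is short, each axis is a $uv$-path of length at most $3$, hence $d_G(u,v)\le 3$ and $R$ has at most two internal vertices. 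If $R\subseteq\overline{F_{2k-1}}$ then $L$ is already hub-faithful by definition; since a width-$\ge 2$ sublantern of a dominating lantern is dominating with interior contained in that of $L$, the sublantern $\{P_1,\dots,P_k\}$ is then hub-faithful and we are done. So from now on assume $R$ meets the interior of $L$.

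The crucial step is to prove that then $R\subseteq\overline{F_\ell}$ for some $\ell\in\{1,\dots,2k-2\}$. I would argue by following the (at most three) edges of $R$. An edge $e$ of $R$ is a simple curve whose relative interior contains no vertex of $G$ and crosses no edge of $G$; hence either its relative interior avoids $L$ entirely and $e$ lies in the closure of a single face of $L$, or $e$ belongs to some axis $P_m$, in which case $e\subseteq P_m\subseteq\overline{F_{m-1}}\cap\overline{F_m}$. I would then examine the internal vertices $x$ of $R$: if $x\in V(L)$ then $x$ is distinct from both hubs, hence internal to a unique axis $P_m$, and because $x$ is joined along $R$ to the preceding and following vertices of $R$ while $P_m$ is chordless apart from between its hubs, each of those neighbours is forced to be either the neighbour of $x$ along $P_m$ or a hub joined to $x$ by the first (resp.\ last) edge of $P_m$. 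Running through the finitely many resulting configurations of the at most two internal vertices of $R$ -- each time using that a chord of an axis between a hub and a non-consecutive internal vertex is forbidden, and that $R$ is a shortest $uv$-path -- one finds that either $R$ equals an axis $P_m$ (necessarily with $2\le m\le 2k-2$, so $R\subseteq\overline{F_{m-1}}$ and $m-1\in\{1,\dots,2k-3\}$), or all edges of $R$ lie in the closure of one common face $F_\ell$; and $\ell\ne 2k-1$, since $R\subseteq\overline{F_{2k-1}}$ is excluded by assumption.

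To conclude, suppose $R\subseteq\overline{F_\ell}$ with $1\le\ell\le 2k-2$, and let $L'$ be the sublantern with axes $P_{\ell+1},\dots,P_{\ell+k}$ if $\ell\le k-1$, and with axes $P_{\ell-k+1},\dots,P_\ell$ if $\ell\ge k$. In both cases these are $k$ distinct axes of $L$ (no wrap-around, since $2\le\ell+1$ and $\ell+k\le 2k-1$ in the first case, $1\le\ell-k+1$ and $\ell\le 2k-2$ in the second), so $L'$ has width $k$ and, being a sublantern of the dominating lantern $L$, is dominating. Its free face $F'$ is the face of $L'$ between the first and the last chosen axis that runs the long way round, through the original free face $F_{2k-1}$; one checks directly that $\overline{F_\ell}\subseteq\overline{F'}$, because the open face $F_\ell$ lies on that long arc, one of the two axes bounding $F_\ell$ is an extreme chosen axis (hence on $\partial F'$), and the other is an unchosen axis lying inside $F'$. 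Hence $R\subseteq\overline{F_\ell}\subseteq\overline{F'}$, so $R$ avoids the interior $\mathbb R^2\setminus\overline{F'}$ of $L'$, and $L'$ is a hub-faithful sublantern of $L$ of width $k$.

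I expect the middle step -- confining $R$ to the closure of a single face of $L$ -- to be the main obstacle: it is elementary but requires a careful, somewhat tedious enumeration of where the one or two internal vertices of $R$ can sit relative to the axes and faces of $L$, using chordlessness (and minimality of $R$) each time to rule out wandering configurations. Once that is in hand, the pigeonhole choice of $k$ consecutive axes is immediate, since there are $2k-1$ axes and only one forbidden face.
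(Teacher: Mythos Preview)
Your approach is essentially the same as the paper's: fix a shortest $uv$-path, show it lies in the closure of a single face of $L$, and then take $k$ consecutive axes on the other side. The paper's proof is much shorter, however, because it first observes that one may assume the shortest path has length at most $2$: if $d_G(u,v)=3$, then every axis of $L$ (being a $uv$-path of length at most $3$) is itself a shortest path, and in particular the boundary axis $P_1\subseteq\overline{F_{2k-1}}$ witnesses that $L$ is already hub-faithful. With this reduction in hand, the ``confine $R$ to one face'' step becomes a two-line argument: a length-$1$ path is a single edge drawn in some face; a length-$2$ path $u,w,v$ has either $w\notin V(L)$ (so $w$ and both incident edges lie in one open face $F_i$) or $w$ internal to some axis $P_m$, in which case chordlessness of $P_m$ forces $uw$ and $wv$ to be the two edges of $P_m$, i.e.\ $R=P_m$. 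This completely replaces the length-$3$ enumeration you flagged as the main obstacle.

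One imprecision in your write-up: chordlessness of $P_m$ only forbids edges between non-consecutive vertices \emph{of $P_m$}; it does not by itself force the neighbours of an internal vertex $x\in V(P_m)$ along $R$ to lie on $P_m$. What actually constrains those neighbours in your length-$3$ analysis is planarity (an edge incident with $x$ must lie in $\overline{F_{m-1}}$ or $\overline{F_m}$); chordlessness is only invoked when both endpoints of an edge of $R$ happen to lie on the same axis.
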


\begin{proof}
Let $u,v$ denote the hubs of $L$. We let $F$ denote the interior of $L$, with cycle boundary $C$. Assume that $L$ is not hub-faithful, and consider a shortest path $uv$-path $P$ which intersects $F$. Note that as $L$ is short, $P$ must have length at most $2$, and thus in particular it is entirely contained in $F$. 

Let $P_1, \ldots, P_{k'}$ denote the axes of $L$, where $k':=2k-1$, such that for each $i\in \sg{1,\ldots, k'}$, $P_i\cup P_{i+1}$ is the cycle boundary of a face $F_i$ of $L$ (indices are taken modulo $k'$), and such that $C=P_1\cup P_{k'}$. As $P$ has length at most $2$, it is either contained in some face $F_i$, or equal to a path $P_i$ (the latter follows from the fact that $L$ is chordless). Up to replacing each index $i'$ by $k'+1-i'$, we may assume without loss of generality that $i\leq k$. In particular, it then implies that the path $P$ does not intersect the interior of the sublantern $L':=P_{k}\cup\ldots\cup P_{k'}$, which has width $k$, and whose free face has cycle boundary $P_{k}\cup P_{k'}$. Hence $L'$ satisfies the desired properties.
\end{proof}

\paragraph{Deep lanterns}
Let $L$ be a dominating lantern in a plane graph $G$, and let $u,v$ denote its hubs. We
let $F$ denote the interior of $L$, and $C$ its boundary cycle.
We let $U_L$ (resp. $V_L$) be the set of vertices of $V(G)\setminus V(C)$ drawn in $F$ that are adjacent to $u$ but not to $v$ (resp. to $v$ but not to $u$). In the remainder of the paper, we will always denote the hubs of lanterns with the letters $u$ and $v$ (up to adding some indices/superscript), in order to be able to reuse the notations $U_L, V_L$ without any ambiguity.
We say that $L$ is \emph{deep} if it satisfies the following two conditions
 
\begin{itemize}
 \item If $U_L\neq \emptyset$, then there exists a vertex $u'\in U_L$ such that
 for every $w\in V(C)\setminus \sg{u,v}$, $d_G(u', w)=d_{G[V(C)]+uu'}(u',w)$\footnote{Recall that $G[V(C)]+uu'$ is obtained by adding to the graph induced by $C$ the vertex $u'$ and the edge $uu'$.}. In particular, $u$ is the only neighbour of $u'$ in $C$.
 \item If $V_L\neq \emptyset$, then there exists a vertex $v'\in V_L$ such that 
 for every $w\in V(C)\setminus \sg{u,v}$, $d_G(v', w)=d_{G[V(C)]+vv'}(v',w)$. In particular, $v$ is the only neighbour of $v'$ in $C$.
\end{itemize}

See \Cref{fig: deep} for an illustration.

\begin{figure}[htb]
  \centering  
  \includegraphics[scale=1]{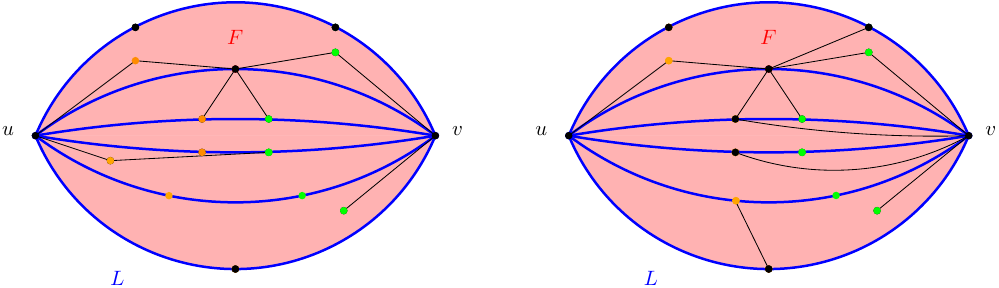}
  \caption{In both drawings, a short dominating lantern $L$ is represented in blue. The vertices from $U_L$ and $V_L$ are respectively colored in orange and in green. The face $F$ is colored in light red. 
  In the left drawing, $L$ is deep, while 
  it is not the case anymore in the right drawing}
  \label{fig: deep}
\end{figure}

\begin{lemma}
 \label{lem: deep}
 Let $G$ be a plane graph of diameter at most $3$, $k\geq 2$ and $L$ be a short hub-faithful $(k+12)$-lantern of $G$. Then $L$ contains a hub-faithful $k$-sublantern which is deep.
\end{lemma}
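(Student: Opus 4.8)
The goal is to extract, from a short hub-faithful lantern $L$ of large width $k+12$, a hub-faithful sublantern of width $k$ that is \emph{deep}, i.e.\ one whose set $U_{L'}$ (if nonempty) contains a vertex $u'$ seeing only $u$ on the boundary cycle and realizing its boundary distances inside $G[V(C')]+uu'$, and symmetrically for $V_{L'}$. The plan is to spend at most $6$ of the $12$ ``spare'' axes on making the $U$-side deep, and the other $6$ on the $V$-side, always passing to sublanterns (so hub-faithfulness and shortness are preserved automatically by \Cref{lem: emptying} and the remark following it, and the interior only shrinks).

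First I would handle the $U$-side. Write the axes of $L$ as $P_1,\dots,P_{k+12}$ with $P_i\cup P_{i+1}$ facial and $C=P_1\cup P_{k+12}$ its boundary cycle (with the free face being the one bounded by $C$). If $U_L=\emptyset$, this side imposes no condition and I skip to the $V$-side. Otherwise, pick $u'\in U_L$ minimizing $d_G(u',v)$ among vertices of $U_L$; since $L$ is dominating and $u'$ lies in the interior, $u'$ is adjacent to $u$ and $d_G(u',v)\le 3$. Because $L$ is short and hub-faithful, the shortest $u'v$-paths are short and confined, so $u'$ together with its shortest paths to the boundary ``uses up'' only a bounded number of consecutive faces of $L$: a path of length $\le 3$ from $u'$ (avoiding $u$ after the first step, or through $u$) crosses at most a few consecutive axes, so the set of indices $i$ for which $u'$ or a witnessing shortest path to some $w\in V(C)\setminus\{u,v\}$ touches face $F_i$ is an interval of length at most $6$. (This is the same ``shortest paths in a diameter-$3$ planar graph can only sweep through boundedly many consecutive axes'' argument used in the proofs of \Cref{lem: new-dom-lanterns} and \Cref{lem: faithful}.) Discarding those $\le 6$ axes and passing to the sublantern spanned by the remaining $\ge k+6$ axes on the other side of $u'$, the new cycle boundary $C_1$ has $u$ as the only boundary neighbour of $u'$, and $u'$ realizes all its distances to $V(C_1)\setminus\{u,v\}$ inside $G[V(C_1)]+uu'$ — this is precisely the first ``deep'' condition witnessed by $u'$, and it is stable under further sublantern-taking as long as we keep the axis through $u'$ adjacent to $u$ as a boundary axis.

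Then I would repeat the symmetric argument on the $V$-side of the width-$(\ge k+6)$ sublantern just produced, choosing $v'\in V_{L}$ (restricted to the current interior) minimizing $d_G(v',u)$, spending at most another $6$ axes, and arriving at a hub-faithful sublantern $L'$ of width $\ge k$ satisfying both deep conditions; truncating to exactly $k$ axes (keeping the two witnessing axes as boundary axes) finishes the proof. The only point requiring care — and the main obstacle — is making sure the two steps do not interfere: after fixing $u'$ on one side we must check that the vertex $v'$ chosen later still witnesses the $V$-condition relative to the \emph{new, smaller} boundary cycle, and that $u'$ is still an interior vertex with $u$ its only boundary neighbour. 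This is fine because both operations only delete faces (hence vertices and edges) strictly on the ``far'' side, so distances among the surviving boundary vertices and from $u',v'$ to them can only stay equal or become larger, and the witnessing equalities $d_G=d_{G[V(C)]+\cdot}$ are of the form ``the confined short path already lies in the part we keep'', which remains true; one also uses that $U_{L'}\subseteq U_L$ and $V_{L'}\subseteq V_L$ after restricting to the interior, so nonemptiness is only ever lost, never gained, and an already-chosen witness keeps working. I expect the write-up to consist mainly of bookkeeping the index intervals and invoking \Cref{lem: emptying} to preserve isometry/hub-faithfulness at each truncation.
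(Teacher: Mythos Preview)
There is a genuine gap. The deep condition for a sublantern $L'$ requires the witness $u'$ to belong to $U_{L'}$, i.e.\ to lie in the \emph{interior} of $L'$ and off its boundary cycle $C'$. Your plan fixes $u'\in U_L$, identifies the $\le 6$ consecutive faces of $L$ touched by $u'$ and its short paths to $V(C)\setminus\{u,v\}$, discards those axes, and passes ``to the sublantern spanned by the remaining axes on the other side of $u'$''. After this step $u'$ sits in the discarded strip, hence outside the interior of the new sublantern $L_1$, so $u'\notin U_{L_1}$ and it cannot witness anything for $L_1$. The later phrase ``keep the axis through $u'$ \ldots\ as a boundary axis'' compounds the confusion: $u'$ need not lie on any axis at all, and even if it did, placing it on the boundary would again remove it from $U_{L_1}$. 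Nor does your construction force $U_{L_1}=\emptyset$, so the $U$-side is not satisfied vacuously either. (The minimising choice of $d_G(u',v)$ plays no visible role in the argument as written.)

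The paper's proof works the other way round and never fixes a witness in advance. It defines two concentric sublanterns $L':=L_{4,k'-3}$ and $L'':=L_{7,k'-6}$ (peeling three axes off each end, twice) and makes the single blanket observation that, because $L$ is short, \emph{every} vertex of $U_{L'}$ is automatically a valid deep-witness for $L$ itself, since it is separated from each internal vertex of $C$ by at least three axes on either side. A four-line case split on which of $U_{L'},V_{L'},U_{L''}$ are empty then declares one of $L,L',L''$ to be the desired deep hub-faithful sublantern of width at least $k$. No tracking of where any particular $u'$ sits is needed.
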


\begin{proof}
 Let $u,v$ denote the hubs of $L$, $k':=k+12$ and let $F$ denote the interior of $L$. We let $P_1, \ldots, P_{k'}$ denote the axes of $L$ such that for every $i\in \sg{1,\ldots, k'}$, $P_i$ and $P_{i+1}$ form together a facial cycle of $L$ (indices are taken modulo $k'$). Moreover, we assume that $C:=P_1\cup P_{k'}$ forms the boundary cycle of $F$.
 For every $i\leq j \in \sg{1,\ldots, k'}$, we let $L_{i,j}$ denote the sublantern of $L$ obtained after taking the union of the axes $P_l$, for $l\in \sg{i, i+1, \ldots, j}$. We set $L':=L_{4, k'-3}$ and $L'':=L_{7, k'-6}$. We also let $F'$ and $F''$ be respectively the interiors of $L'$ and $L''$, so that $\overline{F''}\subseteq \overline{F'}\subseteq \overline{F}$ (see \Cref{fig: deep-proof}). Note that we then have $U_{L''}\subseteq U_{L'}\subseteq U_L$, and 
 $V_{L''}\subseteq V_{L'}\subseteq V_L$.
 
 Observe that as $L$ is short, every vertex $u'\in U_{L'}$ (resp. $v'\in V_{L'}$) satisfies that
 for every $w\in V(C)\setminus \sg{u,v}$,
 we have $d_G(u', w)=d_{C+uu'}(u',w)$ (resp. $d_G(v', w)=d_{C+vv'}(v',w)$). In particular, if both $U_{L'}$ and $V_{L'}$ are nonempty, then $L$ is a deep lantern and we are done.
 Note also that if $U_{L'}=V_{L'}=\emptyset$, then $L'$ is deep, so we may assume without loss of generality that $U_{L'}\neq \emptyset$ and that $V_{L'}=\emptyset$. 
 
 The same reasoning as above (with $L'$ and $L''$ playing respectively the roles of $L$ and $L'$) implies that if $U_{L''}$ is nonempty, then $L'$ is a deep lantern. Thus we may assume that $U_{L''}=\emptyset$. In particular, we then have $U_{L''}=V_{L''}=\emptyset$, implying that $L''$ is deep. 
\end{proof}

\begin{figure}[htb]
  \centering  
  \includegraphics[scale=0.8]{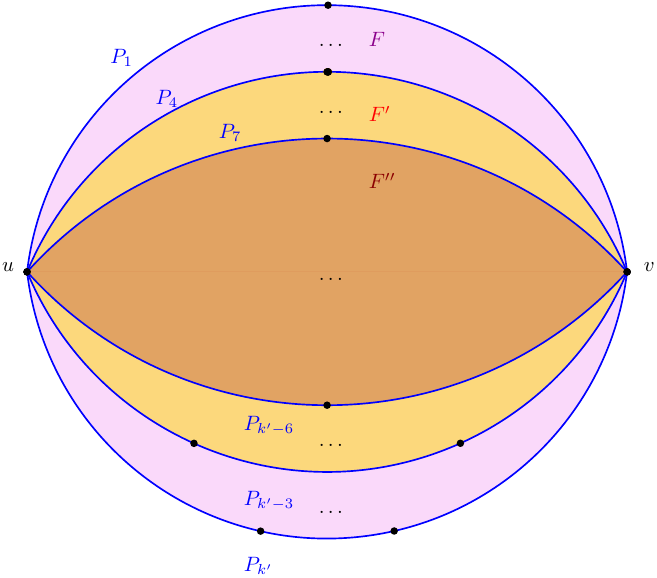}
  \caption{Proof of \Cref{lem: deep}.} 
  \label{fig: deep-proof}
\end{figure}

\paragraph{Nice lanterns}
We say that a lantern is \emph{nice} if it is hub-faithful (and thus also dominating), short, chordless, deep, and has width at least $6$.

\begin{lemma}
 \label{lem: extract}
 Let $G$ be a planar graph of diameter at most $3$ and $L$ be a $39$-lantern in $G$. Then $G$ contains a nice lantern $L'$ with the same hubs than $L$, and such that $V(L')\subseteq V(L)$.
\end{lemma}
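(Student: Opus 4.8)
The plan is to apply the lemmas of this section in sequence, each time peeling off enough axes to gain a structural property while retaining enough width for the next step. We start from a $39$-lantern $L$ in $G$ with hubs $u,v$; note $39 = 37+2$, so \Cref{lem: new-dom-lanterns} (with $k=37 \geq 8$) produces a dominating sublantern $L_1$ of width $37$ with the same hubs, and with $V(L_1)\subseteq V(L)$. Next, since $37 = 35+2$ and $35 \geq 4$, \Cref{lem: short-lanterns} applied to $L_1$ yields a dominating \emph{short chordless} lantern $L_2$ of width $35$, with the same hubs, whose interior is contained in the interior of $L_1$, and with $V(L_2)\subseteq V(L_1)\subseteq V(L)$.

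The third step is \Cref{lem: faithful}: writing $35 = 2\cdot 18 - 1$, that lemma (with $k=18 \geq 2$) applied to the short chordless dominating $35$-lantern $L_2$ gives a hub-faithful sublantern $L_3$ of width $18$. Since $L_3$ is a sublantern of the short chordless lantern $L_2$, it is itself short and chordless (as noted just before \Cref{lem: short-lanterns}), and of course $V(L_3)\subseteq V(L_2)\subseteq V(L)$ and its interior is contained in that of $L_2$, hence of $L$. Finally, since $18 = 6 + 12$ and $6 \geq 2$, \Cref{lem: deep} applied to the short hub-faithful $18$-lantern $L_3$ produces a hub-faithful \emph{deep} sublantern $L'$ of width $6$. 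Again $L'$, being a sublantern of the short chordless $L_3$, is short and chordless, and it is hub-faithful, deep, and of width $6$; it has the same hubs as $L$, $V(L')\subseteq V(L_3)\subseteq V(L)$. Thus $L'$ is nice, as required.

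The only points that need a little care — rather than genuine obstacles — are bookkeeping in nature: checking that the width thresholds line up ($39 \to 37 \to 35 \to 18 \to 6$, matching the hypotheses $k\geq 8$, $k\geq 4$, $2k-1$ with $k\geq 2$, and $k+12$ with $k\geq 2$ respectively), and observing that the properties "short", "chordless", and "hub-faithful" are inherited by sublanterns (the first two are stated explicitly in the paper, and hub-faithfulness is the last sentence of \Cref{lem: emptying}), so that the property gained at one step survives all later steps. One subtlety worth flagging: \Cref{lem: new-dom-lanterns} and \Cref{lem: short-lanterns} are stated for \emph{plane} graphs, whereas \Cref{lem: extract} is stated for a planar graph $G$; this is harmless since we may fix an arbitrary embedding of $G$ and work with it throughout, the conclusion being embedding-independent. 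Since each invoked lemma also guarantees that the sublantern keeps the same hubs and has vertex set inside that of the previous lantern, the final $L'$ automatically has the same hubs as $L$ and satisfies $V(L')\subseteq V(L)$.
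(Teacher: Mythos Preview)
Your proof is correct and follows exactly the approach of the paper, which simply states that the result follows from consecutive applications of \Cref{lem: new-dom-lanterns,lem: short-lanterns,lem: faithful,lem: deep}. You have carried out the explicit width bookkeeping ($39 \to 37 \to 35 \to 18 \to 6$) and the inheritance checks that the paper leaves to the reader.
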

\begin{proof}
 The proof immediately follows from consecutive applications of \Cref{lem: new-dom-lanterns,lem: short-lanterns,lem: faithful,lem: deep}.
\end{proof}

\section{Reduction to fractional matching}
\label{sec: reduction}
Our main result in this section is the following theorem, which establishes a connection between the degree diameter problem in planar graphs of diameter $3$, and fractional matchings. 

\begin{theorem}
 \label{thm: main1}
 Let $G$ be a planar graph with maximum degree $\Delta$ and diameter at most $3$. Then there exists a planar graph $\Gamma$ and a neighbouring set of edges $R\subseteq E(\Gamma)$ such that 
 $$|V(G)|\leq \mu^{\ast}(\Gamma[R])\cdot \Delta + 9 + 39^3.$$
\end{theorem}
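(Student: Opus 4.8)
The plan is to implement the iterative lantern-emptying procedure described in the overview, keeping track at each step of a planar graph $\Gami$, a neighbouring set $\Ri\subseteq E(\Gami)$, and a fractional matching $\mui$ of $\Gami[\Ri]$ whose value accounts (with weight $\tfrac1\Delta$ each) for all vertices deleted so far. Precisely, I would prove by induction the invariant: $\Gami$ is planar of diameter at most $3$ with maximum degree at most $\Delta$, $\Ri$ is neighbouring in $\Gami$, $\Gi$ is an isometric subgraph of $G$, and $|V(G)\setminus V(\Gi)|\le \mu_i(\Gami[\Ri])\cdot\Delta = \sum_{e\in\Ri}\mu_i(e)\cdot\Delta$. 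Start with $G_0=\Gamma_0=G$, $R_0=\emptyset$, $\mu_0\equiv 0$.

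For the inductive step, as long as $\Gi$ contains a $39$-lantern, apply \Cref{lem: extract} to extract a nice lantern $L_i$ (short, chordless, hub-faithful, deep, width $\ge 6$) with hubs $u_i,v_i$ and interior $F_i$. Since $L_i$ is short, its boundary cycle $C_i$ has length at most $6$, and since $L_i$ is dominating, $\{u_i,v_i\}$ dominates everything drawn in $F_i$; hub-faithfulness (via \Cref{lem: emptying}) guarantees that deleting the vertices strictly inside $F_i$ yields an isometric — hence still diameter-$3$ — subgraph $\Gip$ of $\Gi$, and thus of $G$. I form $\Gamip$ from $\Gami$ by the same deletion, then add the edge $u_iv_i$ (if absent) and two pendant edges $u_ia_i$, $v_ib_i$ at fresh vertices; planarity is preserved because $C_i$ becomes facial after emptying $F_i$, so the new edge and pendants can be drawn in $F_i$. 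The new set $\Rip$ is $\Ri$ together with $u_iv_i$ and whichever of $u_ia_i,v_ib_i$ corresponds to a hub having a private neighbour of $\Gi$ inside $F_i$ (using the sets $U_{L_i},V_{L_i}$ from the definition of deep). The key bookkeeping computation is to define $\muip$ extending $\mui$: set $\muip(u_iv_i)$ and $\muip(u_ia_i),\muip(v_ib_i)$ so that the total increase $\Delta\cdot(\text{new mass})$ is at least the number $|V(\Gi)\setminus V(\Gip)|$ of newly deleted vertices. Here deepness is exactly what lets me bound that count: the deleted vertices are dominated by $\{u_i,v_i\}$ inside $F_i$, so there are at most $2\Delta$ of them, but more carefully one splits them into those adjacent to $u_i$ only, $v_i$ only, or both, and the pendant edges absorb the "private" contributions while $u_iv_i$ absorbs the common ones; I must also check the new edges can legitimately be added to a neighbouring set (they all meet $C_i$, within distance $1$ of everything previously added, since every earlier edge of $\Ri$ lies on some boundary cycle and the whole construction stays "local"). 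I also need $\mu_{i+1}$ to remain a valid fractional matching — the constraint at $u_i,v_i$ is the thing to watch, since these vertices may receive mass from edges of $\Ri$ added at a previous step; this is where the precise weights (fractions with denominator depending on how many pendants are present, bounded so the sum at each vertex stays $\le 1$) matter.

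The process terminates: each step strictly decreases $|V(\Gi)|$. When it stops, $\Gi=:\Gell$ has no $39$-lantern, so by Menger every two vertices of $\Gell$ are joined by at most $38$ internally disjoint paths; combined with diameter $3$ this bounds the structure of $\Gell$. At this point I invoke the bounded-domination result of Dorfling–Goddard–Henning (via MacGillivray–Seyffarth): $\Gell$ has a dominating set $D$ of size at most $6$ (for $|V(G)|$ large; the additive constant absorbs small cases). Following the overview, I then choose $X\subseteq V(\Gell)$ — essentially the vertices dominated "far" from any would-be lantern structure — remove it, and add a further set $R$ of pendant edges to $\Gami$ to get $\Gamma$, extending $\mui$ to $\mu$ on $\Gamma[\Ri\cup R]$ so that each $x\in X$ again contributes $\tfrac1\Delta$. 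The point is that without any $39$-lantern available, each vertex $w$ of $D$ has bounded "lantern width" to the rest, so $N(w)$ can be covered by a bounded number of stars, each star contributing one pendant edge carrying mass $\tfrac{|{\rm star}|}{\Delta}$; since $|D|\le 6$ and the non-lantern condition bounds the number of such stars per dominator by roughly $38$, the leftover graph $\Gell-X$ has at most $\approx 6\cdot 38$... — more carefully, at most $9+39^3$ — vertices. Finally verify $\Ri\cup R$ is still neighbouring in $\Gamma$ (all new pendant edges hang off $D$, which has diameter $\le 3$, but neighbouring requires distance $\le 1$, so one must argue the relevant edges all attach near a common bounded-radius core — this uses that the remaining graph is small), and conclude
\[
|V(G)| \;=\; |V(G)\setminus V(\Gell)| + |X| + |V(\Gell)\setminus X| \;\le\; \mu^{\ast}(\Gamma[\Ri\cup R])\cdot\Delta + 9 + 39^3.
\]

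\textbf{Main obstacle.} The genuinely delicate part is the termination/endgame: showing that the absence of a $39$-lantern, together with diameter $3$ and bounded domination number, forces the residual graph (after removing a set $X$ whose mass is correctly accounted for) to have only $O(1)$ vertices, \emph{and} that the pendant edges added for $X$ together with the previously accumulated $\Ri$ still form a \emph{neighbouring} set — the neighbouring condition (pairwise distance $\le 1$) is much stronger than "all incident to a small dominating set", so one really needs the quantitative structure of diameter-$3$ graphs with no wide lantern. By contrast the iterative step is conceptually routine given the lantern lemmas, though the exact fractional weights require care to keep $\mu_i$ feasible.
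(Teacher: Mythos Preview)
Your iterative phase broadly matches the paper's approach, but two parts contain genuine gaps.

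\medskip

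\textbf{Neighbouring during the iteration.} Your justification that the edges added at step $i$ stay within distance $1$ of all earlier edges (``they all meet $C_i$ \ldots the whole construction stays local'') is not enough. In the paper this is Claim~4.6, a three-case argument that crucially exploits the \emph{deepness} of each nice lantern: when $e=u_ia_i$ has type~$1$ and $e'=u_jv_j$ has type~$2$ (or both have type~$1$), one needs the deep witness $u'\in U_{L_i}$ whose distances to the boundary cycle agree with those computed along $C+u_iu'$, in order to force an edge of $G$ between $u_i$ and $\{u_j,v_j\}$ from a length-$3$ path. You invoke deepness only for counting deleted vertices, but it is really the engine behind neighbouring.

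\medskip

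\textbf{The endgame.} Here the plan is off. You attach pendant edges to a bounded dominating set $D$ and hope to get neighbouring because ``the remaining graph is small''; but at the moment you must verify neighbouring, $G_\ell$ may still be arbitrarily large, so this does not go through. The paper's mechanism is different: one defines
\[
W:=\{\,w\in V(G_\ell):\ \ecc_{G_\ell}(w)=1\ \text{or}\ (\ecc_{G_\ell}(w)=2\ \text{and}\ \deg_{G_\ell}(w)>2\cdot 39^{2})\,\},
\]
attaches one pendant edge to each $w\in W$, and proves directly (Claim~4.14, using Claims~4.12 and~4.13) that every $w\in W$ is at distance $\le 1$ in $\Gamma_\ell$ from \emph{every} edge of $R_\ell$. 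Claim~4.13 again uses deepness of the old lanterns, and Claim~4.12 uses the no-$39$-lantern condition in the form ``no two vertices of $G_\ell$ have $\ge 39$ common neighbours''. Only \emph{after} neighbouring is secured does the domination bound enter: taking a dominating set $D$ of size $\le 9$, every $d\in D\setminus W$ has either eccentricity~$3$ (hence degree $<39^{3}$ by Claim~4.10) or degree $\le 2\cdot 39^{2}$, so $|V(G_\ell)\setminus N_{G_\ell}(W\cap D)|\le 9+39^{3}$, and one sets $X:=N_{G_\ell}(W)$. Your ``bounded number of stars per dominator'' picture has no counterpart in the argument and would not produce a neighbouring set.
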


Observe that combining \Cref{thm: main1} with \Cref{thm: frac-intro} immediately gives a proof of \Cref{thm: main}.

We split the proof of \Cref{thm: main1} in two parts, respectively described in \Cref{sec: vidage-1,sec: vidage-2}. 
In the whole section, we fix an integer $\Delta\geq 3$, together with a plane graph $G$ of diameter at most $3$ and maximum degree $\Delta$, with a fixed planar embedding. Again, every subgraph of $G$ will always be implicitly embedded in $\mathbb R^2$, with respect to the drawing of $G$.

\subsection{Emptying the lanterns}
\label{sec: vidage-1}
As sketched in the introduction, the first part of our proof of \Cref{thm: main1} consists in iteratively removing vertices lying  in the dominated face of some well-chosen cycles. We will find such cycles as boundaries of interiors of nice lanterns. 

\paragraph{Proof overview.}
\setcounter{footnote}{0}
We construct iteratively a sequence of graphs $G_0, \ldots, G_\ell$, starting with $G_0:=G$ such that for each $i\in \sg{0, \ldots, \ell-1}$, $\Gip$ is a proper isometric subgraph of $G_i$. Each graph $G_i$ will be considered with respect to the embedding of $G$. At step $i\geq 0$, we will consider a nice lantern $L_i$ and remove every vertex drawn in its interior to obtain $\Gip$. In parallel, we will also construct inductively a sequence $(\Gami, \Ri, \mui)_i$ such that for each $i\geq 0$, $\Gi$ is a subgraph of $\Gami$, $\Ri$ is a neighbouring set of edges of $\Gami$, and $\mui$ a fractional matching of $\Gami[\Ri]$ such that
$\left|V(G)\setminus V(\Gi)\right|= \left(\sum_{e\in \Ri}\mui(e)\right)\Delta$. 
We iterate this procedure and construct a new graph $\Gip$ as long $\Gi$ contains a $39$-lantern, and thus, by \Cref{lem: extract}, a nice lantern\footnote{In fact, for technical reasons, we will more precisely look for a lantern which is nice in $G$, and which is a subgraph of $\Gi$.}.

\paragraph{Construction of the sequence $\mathbf{(G_i, \Gamma_i, R_i, \mu_i)_{0\leq i\leq \ell}}$} 

Our goal now is to construct by induction over increasing values of $i\geq 0$ a sequence $(G_i, \Gamma_i, R_i, \mu_i)_i$ satisfying the properties mentioned above.

\begin{remark}
 \label{rem: preliminary}
 Note that if $G$ contains a lantern with hubs $u,v$ such that $\sg{u,v}$ dominates all vertices of $G$, then $|V(G)|\leq 2(\Delta(G)+1)$. Thus we will assume without loss of generality that $G$ has no such lantern. In particular, it implies that for every dominating lantern $L$, there is only one possible choice for the interior of $L$, and the free face is not dominated by $\sg{u,v}$.
\end{remark}

\paragraph*{Construction of $\mathbf{(\Gi)_i}$}
We first explain how to construct the sequence $(G_i)_{i\geq 0}$ of isometric subgraphs of $G$. 

\begin{claim}\label{clm: ipropGip}
    There exists a (finite) sequence $(G_i)_{0\leq i\leq \ell}$ such that $G_0=G$, $G_{\ell}$ has no $39$-lantern, and such that for each $i<\ell$, $\Gip$ is a proper isometric subgraph of $\Gi$, obtained by removing every vertex of $\Gi$ lying in the interior of a lantern $L_i$ of $\Gi$ with hubs $u_i, v_i$, which is nice in $G$. 
    
    In particular, each $G_i$ is a plane graph with maximum degree $\Delta$ and diameter at most $3$. 
   
    Moreover, if $F_i$ denote the interior of $L_i$, then we can construct $(G_i)_i$ such that for every $i\neq j$, $F_i$ and $F_j$ are disjoint.
\end{claim}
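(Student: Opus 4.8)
The plan is to build the sequence greedily: as long as the current graph $\Gi$ contains a $39$-lantern, we extract from it a nice lantern and empty its interior. First I would set $G_0 := G$ and, inductively, given $\Gi$ a plane isometric subgraph of $G$ of diameter at most $3$ and maximum degree $\Delta$, check whether $\Gi$ contains a $39$-lantern. If it does, then since $\Gi$ is an isometric subgraph of $G$ it also has diameter at most $3$, so \Cref{lem: extract} applies and produces a nice lantern $L'$ of $\Gi$; but a subtlety, flagged in the footnote of the proof overview, is that ``niceness'' of a lantern (in particular hub-faithfulness, which refers to shortest paths avoiding the interior) is most naturally controlled with respect to the \emph{original} graph $G$. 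So I would instead argue that the $39$-lantern of $\Gi$ is, thanks to isometry, also a $39$-lantern of $G$, apply \Cref{lem: extract} \emph{inside $G$} to get a lantern $L_i$ that is nice in $G$ and satisfies $V(L_i)\subseteq V(L)\subseteq V(\Gi)$, and then observe that $L_i$ is a legitimate subgraph of $\Gi$ whose interior (as a region of the plane) is well-defined via the fixed embedding. I then set $\Gip := \Gi - (\text{vertices of }\Gi\text{ drawn in the interior of }L_i)$.

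Next I would verify the asserted properties of $\Gip$. Since $L_i$ is a short hub-faithful lantern of width at least $6 \ge 4$ and $\Gi$ has diameter at most $3$, \Cref{lem: emptying} gives directly that $\Gip$ is an isometric subgraph of $\Gi$; composing isometric embeddings, $\Gip$ is an isometric subgraph of $G$, hence has diameter at most $3$, and removing vertices cannot increase the maximum degree, so $\Delta(\Gip)\le \Delta$. The interior of a dominating lantern is nonempty only if the free face is not dominated by $\{u_i,v_i\}$, which holds by \Cref{rem: preliminary}; combined with the fact that $L_i$ is separating (both its faces meet $G$ — guaranteed because, by \Cref{rem: preliminary}, $\{u_i,v_i\}$ does not dominate all of $\Gi$, so the free face contains at least one vertex), this shows $\Gip$ is a \emph{proper} subgraph of $\Gi$, which is what forces termination.

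For termination I would use a simple monotone quantity: $|V(\Gi)|$ is a strictly decreasing sequence of nonnegative integers, so after finitely many steps we reach a graph $\Gell$ with no $39$-lantern, and we stop. This defines the finite sequence $(G_i)_{0\le i\le \ell}$. Finally, for the disjointness of the faces $F_i$: at step $i$, the interior $F_i$ of $L_i$ is a region of the plane all of whose vertices of $\Gi$ get deleted when passing to $\Gip$; since for $j>i$ the lantern $L_j$ is a subgraph of $\Gj\subseteq \Gip$, every vertex of $L_j$ — in particular its hubs and the boundary cycle of $F_j$ — lies outside $F_i$, and since $F_j$ is bounded by that cycle it is contained in the complementary region, so $F_i\cap F_j=\emptyset$; by symmetry this covers all pairs $i\ne j$. (One should note that ``every vertex drawn in $F_i$ is removed'' uses that the closed region $\overline{F_i}$ is delimited by a cycle $C_i$ of $\Gi$ which survives into $\Gip$, so no vertex of $\Gj$ can be strictly inside $F_i$.)

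The main obstacle I expect is the bookkeeping around \emph{which graph} niceness is measured in: \Cref{lem: extract} and the lemmas feeding it are stated for a plane graph of diameter $\le 3$, and one must be careful that applying them to $\Gi$ versus to $G$ yields compatible notions of ``dominating'', ``hub-faithful'' and ``interior''. The clean way around this — and, I believe, the intended one given the footnote — is to always extract the nice lantern with respect to the fixed ambient graph $G$, using isometry of $\Gi$ in $G$ to transport the hypothesis ``$\Gi$ has a $39$-lantern'' into ``$G$ has a $39$-lantern on the same vertex set'', and then to check that all the relevant topological objects ($F_i$, $C_i$) and metric statements restrict correctly to $\Gi$. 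Everything else is routine once this is set up carefully.
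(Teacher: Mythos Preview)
Your overall strategy matches the paper's, and most of the argument is sound: extracting a nice-in-$G$ lantern from a $39$-lantern of $\Gi$ via \Cref{lem: extract}, emptying it via \Cref{lem: emptying}, and using strict decrease of $|V(\Gi)|$ for termination. There is, however, a genuine gap in your disjointness argument.

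Your claim that ``since $F_j$ is bounded by that cycle it is contained in the complementary region'' does not follow. For $j>i$, knowing that every vertex of $L_j$ (hence the boundary cycle $C_j$) lies outside the open region $F_i$ only tells you that $F_i$ lies entirely on one side of $C_j$; it does \emph{not} tell you which side. Nothing in your argument rules out $F_i\subseteq F_j$: at step $j$ you might pick a nice lantern whose interior properly swallows the already-emptied region $F_i$ (for instance a lantern with the same hubs but a strictly larger interior). In that case $F_i$ and $F_j$ intersect even though every vertex of $L_j$ avoids $F_i$. The paper closes this gap with one extra ingredient you omit: at each step it chooses $L_i$ so that its interior $F_i$ is \emph{inclusionwise maximal} among interiors of lanterns of $\Gi$ that are nice in $G$. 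Then, for $j<i$, since no vertex of $L_i$ lies in $F_j$, the region $F_j$ sits inside a single face of $L_i$; if $F_i\cap F_j\neq\emptyset$ this face is one of the $\ge 5$ non-free faces, so $F_j\subsetneq F_i$. But $L_i$ was a legitimate candidate at step $j$ (it is a lantern of $\Gi\subseteq G_j$ and nice in $G$), contradicting the maximality of $F_j$. Without this maximality choice the disjointness conclusion need not hold.

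A smaller point: you apply \Cref{lem: emptying} to $\Gi$, which requires $L_i$ to be hub-faithful \emph{in $\Gi$}, not merely in $G$. You flag this bookkeeping issue in your final paragraph but do not actually resolve it; the paper does so explicitly, using the isometry of $\Gi$ in $G$ to transport a shortest $u_iv_i$-path avoiding $F_i$ from $G$ to $\Gi$.
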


\begin{remark}
 \label{rem: region-decomposition}
 Note that a collection of (topological) closures of interiors of short dominating lanterns that are pairwise disjoint has also been considered and called \emph{region-decomposition} in \cite{alber2004polynomial}, and in subsequent related work on parameterized algorithms for planar graphs.
\end{remark}

\begin{proof}
 We construct the sequence $(G_i)_i$ by induction on $i\geq 0$. Assume that for some $i\geq 0$, a sequence $(G_j)_{0\leq j\leq i}$ with the desired properties has already been constructed. We moreover assume by induction that for every $j<i$, the interior $F_j$ of $L_j$ is not a proper subset of any interior of a nice lantern of $G$.
 
 In particular, an immediate induction implies that all graphs $G_j$ are isometric subgraphs of $G$.
 If $G_i$ has no $39$-lantern, then we conclude by setting $\ell:=i$.
 Assume now that $\Gi$ has a $39$-lantern $L^0_i$. 
 
 First, we claim that $\Gi$ has a lantern $L_i$ which is a nice lantern when considered as a lantern in $G$. To see this, note that $L^0_i$ is also a lantern in $G$, thus, by \Cref{lem: extract}, $G$ contains a lantern $L_i$ whose hubs are the same than the ones of $L^0_i$, which is nice in $G$, and such that $V(L_i)\subseteq V(L^0_i)$. In particular, as $\Gi$ is a subgraph of $G$, it implies that $L_i$ is also a lantern in $\Gi$.
 Among all possible choices, we now choose a lantern $L_i$ in $\Gi$, which is a nice lantern when considered as a lantern in $G$, and such that its interior $F_i$ is inclusionwise maximal. Note that such a choice makes sense, as we assumed in \Cref{rem: preliminary} that the free face of a lantern of $G$ cannot be dominated by its hubs. We denote by $u_i, v_i$ the hubs of $L_i$.

 Let $\Gip$ be the graph obtained from $\Gi$ after removing every vertex of $\Gi$ that lies in $F_i$. Note that as $L_i$ is an hub-faithful lantern in $G$, there exists a shortest path between $u_i$ and $v_i$ in $G$ which does not intersect $F_i$. In particular, as $\Gi$ is an isometric subgraph of $G$ that contains $u_i, v_i$, there also exists a shortest path between $u_i$ and $v_i$ in $\Gi$ which does not intersect $F_i$. It thus implies that $L_i$ is also hub-faithful as a lantern of $\Gi$. Thus, by \Cref{lem: emptying}, $\Gip$ is an isometric subgraph of $\Gi$. Moreover, as $L_i$ is nice in $G$, it has width at least $6$, so in particular $\Gip$ has strictly less vertices than $\Gi$. This shows that the sequence $(G_i)_i$ must be finite.
 
 To conclude the proof, it remains to show that the interiors $F_i$ are pairwise disjoint. It follows from a simple induction on $i\geq 0$: assume by that for some $1\leq i <\ell$, the interiors $F_0, \ldots, F_{i-1}$ constructed so far are pairwise disjoint. Then, as no vertex of $\Gi$ belongs to a face $F_j$ with $j<i$, if the face $F_i$ intersects such an interior $F_j$, then $F_j$ should be included in one of the faces of $L_i$ which is drawn in $F_i$, and thus we should have $F_j\subseteq F_i$. In particular, it contradicts the fact that, when constructing $F_j$, we chose $L_j$ such that $F_j$ was inclusionwise maximal.
\end{proof}

\begin{remark}
 \label{rem: degree-1}
 Note that, up to changing the initial embedding of $G$, we may assume moreover in the proof of \Cref{clm: ipropGip} that if for some $i<\ell$, some vertex with degree $1$ in $\Gi$ is adjacent to $u_i$ or $v_i$, then it is drawn in the interior of $L_i$, and is thus not a vertex of $\Gip$. This follows from the easy observation that every vertex of degree $1$ in some $\Gi$ must also have degree $1$ in $G$, as the operation of removing all vertices lying in the interior of a dominating lantern does not create new pendant edges.
\end{remark}

\paragraph*{Construction of $\mathbf{(\Gami,\Ri)_i}$ and additional properties}
From now on, we denote by $(G_i)_{0\leq i\leq \ell}$ the sequence of graphs given by \Cref{clm: ipropGip}, and we will reuse the notations $L_i, F_i, u_i, v_i$ from the statement.

We begin by defining the sequence $(\Gami)_{0\leq i\leq \ell}$ of auxiliary planar graphs. We set $\Gamma_0:=G$, and for each $i\geq 1$, we let $\Gami$ be the plane graph obtained from $\Gi$ after adding for each $j<i$:
\begin{itemize}
 \item the edge $u_jv_j$ between the hubs of $L_j$, drawn in the face $F_j$, if $u_j$ and $v_j$ were not adjacent in $G_j$;
 \item two pendant edges $u_ja_j$ and $v_jb_j$, drawn in the face $F_j$, and such that $a_j, b_j$ are new vertices of degree $1$ in $\Gami$.
\end{itemize}
See \Cref{fig: Gamip} for an illustration.
Note that the fact that all $\Gami$ are indeed planar follows from the property that all interiors $F_j$ are pairwise disjoint.
Note also that for each $j\leq i$, $\Gamma_j[V(\Gamma_i)]$ is a plane subgraph of $\Gamma_i$.

The following claim is immediate and follows from the definition of $(\Gami)_i$.
\begin{claim}
 \label{clm: iiipropGamip}
 For each $0\leq i\leq \ell$, every edge $e$ from $E(\Gami)\setminus E(\Gi)$ has either one incident vertex of degree $1$, while the other vertex $u$ is a vertex of $G$ which is a hub of a lantern $L_j$ with $j<i$, or $e$ connects two hubs $u_j, v_j$ of $L_j$ and is drawn in $F_j$ for some $j<i$. 
\end{claim}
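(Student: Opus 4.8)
The plan is to simply unwind the recursive definition of the sequence $(\Gamma_i)_i$; there is essentially no content here beyond bookkeeping. Fix $i\in\{0,\dots,\ell\}$. By definition, $\Gamma_i$ is obtained from $G_i$ by adding, for each $j<i$, the edge $u_jv_j$ (drawn inside $F_j$) whenever $u_j$ and $v_j$ are non-adjacent in $G_j$, together with two pendant edges $u_ja_j$ and $v_jb_j$ (also drawn inside $F_j$), where $a_j$ and $b_j$ are fresh vertices. Since emptying a lantern deletes vertices together with all their incident edges, each $G_i$ is an \emph{induced} subgraph of $G$; in particular $E(G_i)\subseteq E(G)$ and $G_i$ is a subgraph of $\Gamma_i$, so $E(\Gamma_i)\setminus E(G_i)$ is precisely the set of all the edges just listed, taken over all $j<i$.

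It then remains to check that each such edge falls into one of the two announced cases. An edge of the form $u_jv_j$ joins the two hubs of $L_j$, which are vertices of $G$ because $L_j$ is a subgraph of $G_j\subseteq G$, and it is drawn in $F_j$ by construction; this is the second alternative of the claim. An edge of the form $u_ja_j$ (the case $v_jb_j$ being symmetric) has $u_j$ equal to a hub of the lantern $L_j$ with $j<i$, hence a vertex of $G$, while $a_j$ has degree $1$ in $\Gamma_i$: the only edges added to $G_i$ when forming $\Gamma_i$ are incident either to hubs $u_{j'},v_{j'}$ (vertices of $G$, thus distinct from the fresh vertex $a_j$) or to auxiliary vertices $a_{j'},b_{j'}$ with $j'\neq j$ (also distinct from $a_j$), so no second edge is ever incident to $a_j$. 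This gives the first alternative, and completes the proof.

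I do not expect any genuine obstacle. The only point deserving a line of care is that the auxiliary endpoints $a_j,b_j$ keep degree $1$ in every subsequent graph $\Gamma_i$ (not just in the step at which they are introduced), and as indicated above this is immediate from the fact that all edges added at later steps are incident only to vertices of $G$ or to other, freshly introduced, auxiliary vertices.
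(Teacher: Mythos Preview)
Your proposal is correct and matches the paper's approach: the paper simply states that the claim ``is immediate and follows from the definition of $(\Gami)_i$'' without giving a proof, and your argument is precisely a careful unwinding of that definition.
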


For each $0\leq i < \ell$, we partition the set of vertices of $G$ distinct from $u_i$ and $v_i$ and drawn in $\overline{F}_i$ into three sets $U_i, V_i, W_i$. We set $U_i:=U_{L_i}$, $V_i:=V_{L_i}$ and let $W_i$ denote the set of vertices drawn in $\overline{F}_i$ which are adjacent to both $u_i$ and $v_i$ (we reuse here the notations $U_L, V_L$ introduced in \Cref{sec:empty_lanterns}). Note that as $F_i$ is the interior of $L_i$, every vertex from $V(G)\setminus \sg{u_i, v_i}$ drawn in $\overline{F}_i$ belongs to exactly one of the three sets $U_i, V_i, W_i$.

We now define the sets $(R_i)_{0\leq i \leq \ell}$ of edges as follows. We start setting $R_0:=\emptyset$. For every $i\geq 1$, and every $j<i$, we add in $\Ri$ the following edges:
\begin{itemize}
 \item the edge $u_jv_j$;
 \item the edge $u_ja_j$ if $U_j\neq\emptyset$;
 \item the edge $v_jb_j$ if $V_j\neq\emptyset$.
\end{itemize}
Note that for every $j<i$, we have $R_j\subseteq \Ri$.
We say that an edge $e\in \Ri$ has \emph{type $j$} for $j\in \sg{1,2}$ if it has exactly $j$ endvertices belonging to $V(G)$. The construction is illustrated in \Cref{fig: Gamip}.

\begin{figure}[htb]
  \centering  
  \includegraphics[scale=1]{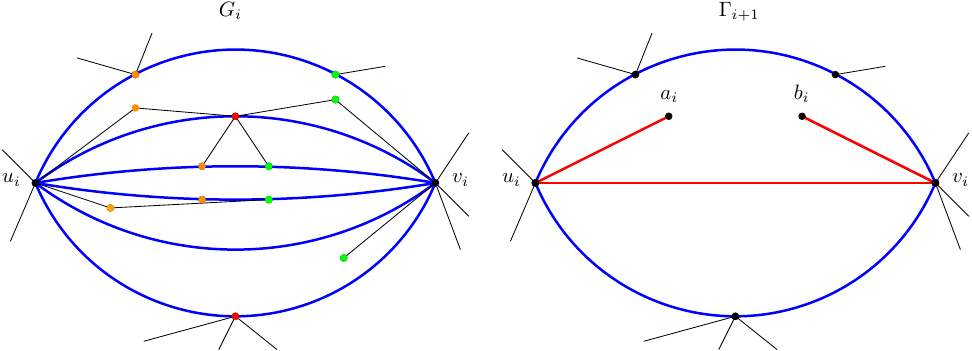}
  \caption{The construction of $\Gamip$. Left: in blue, a  nice lantern $L_i$. Vertices of $U_i$ (resp. $V_i$) are represented in orange (resp. green), and vertices of $W_i$ are represented in red.
  The free face of $L_i$ is the external face. Right: the graph $\Gamip$ obtained after emptying $L_i$. In red, we represented the edges from $\Rip$ added at step $i+1$. As $u_i$ and $v_i$ both have private neighbours in the interior of $L_i$ with respect to each other, both edges $u_ia_i$ and $v_ib_i$ are in $\Rip$.}
\label{fig: Gamip}
\end{figure}

\begin{claim}
 \label{clm: vNeighbouring}
 For each $0\leq i\leq \ell$, the set $\Ri$ is a neighbouring set of edges of $\Gami$.
\end{claim}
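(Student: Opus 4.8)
The plan is to prove the claim by induction on $i$. The base case $i=0$ is trivial since $R_0=\emptyset$. For the inductive step, assume $R_i$ is a neighbouring set of edges of $\Gamma_i$, hence also of $\Gamma_{i+1}$ (since $\Gamma_{i+1}$ contains $\Gamma_i[V(\Gamma_{i+1})]$ as a subgraph and no edge of $R_i$ is destroyed). It remains to handle the (at most three) new edges added to $R_{i+1}$ at step $i$, namely $u_iv_i$, possibly $u_ia_i$, and possibly $v_ib_i$. First I would observe that these three new edges are pairwise at distance at most $1$: $u_iv_i$ shares an endvertex with each of $u_ia_i$ and $v_ib_i$, and $u_ia_i$ and $v_ib_i$ are both incident to the edge $u_iv_i$, so they are at distance $1$ in $\Gamma_{i+1}$.

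The substantive part is to show that each new edge is at distance at most $1$ from every old edge $f\in R_i$. Fix such an $f$. By the inductive construction and Claim \ref{clm: iiipropGamip}, $f$ is either an edge $u_jv_j$ for some $j<i$ (drawn in $F_j$), or a pendant edge $u_ja_j$ or $v_jb_j$ with $j<i$; in all cases $f$ has at least one endvertex which is a hub of some lantern $L_j$ with $j<i$, namely a vertex of $G$. Call this hub vertex $x_f$. The key geometric fact I would invoke is that $L_j$ is a \emph{dominating} lantern of $G$ (since $L_j$ is nice, hence dominating), and its interior $F_j$ is disjoint from $F_i$ by Claim \ref{clm: ipropGip}. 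So the hubs $u_i,v_i$ of $L_i$ lie outside $F_j$ (indeed $u_i,v_i$ survive into $G_{i+1}$, whereas everything in $F_j$ was removed at step $j+1\le i$), and therefore each of $u_i,v_i$ is dominated by $\{u_j,v_j\}$ in $G$ — that is, $u_i$ (and $v_i$) is adjacent in $G$, hence in $\Gamma_{i+1}$, to $x_f$ or to the other hub of $L_j$.

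To finish, I would combine this with the symmetric fact that $L_i$ is dominating: $u_j$ and $v_j$ (being vertices of $G$ outside the interior $F_i$, as $F_i\cap F_j=\emptyset$) are each dominated by $\{u_i,v_i\}$ in $G$. Thus there is an edge of $G\subseteq \Gamma_{i+1}$ between $\{u_i,v_i\}$ and $\{u_j,v_j\}$. This edge witnesses that $u_iv_i$ is at distance at most $1$ from $f$ (since $f$ has an endvertex in $\{u_j,v_j\}$ and $u_iv_i$ has both endvertices in $\{u_i,v_i\}$, and these two sets are adjacent, so the two edges share an endvertex or both meet a common edge). The same adjacency, together with the fact that $u_ia_i$ is incident to $u_i\in\{u_i,v_i\}$ and $v_ib_i$ is incident to $v_i\in\{u_i,v_i\}$, shows that $u_ia_i$ and $v_ib_i$ are each at distance at most $1$ from $f$ as well. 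The main obstacle is purely bookkeeping: making precise the case distinction on the ``shape'' of $f$ and of the new edge, and tracking that all the relevant vertices indeed lie in $G$ and outside the disjoint interiors, so that the domination properties of the nice lanterns $L_i$ and $L_j$ can be applied; once the disjointness of interiors (Claim \ref{clm: ipropGip}) and the dominating property are in hand, each case reduces to exhibiting one edge of $G$ between $\{u_i,v_i\}$ and $\{u_j,v_j\}$.
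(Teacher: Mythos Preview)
Your approach has a fundamental error in how you invoke the ``dominating'' property. Recall the definition: a dominating lantern $L_j$ with hubs $u_j,v_j$ and \emph{free face} $F$ has the property that vertices of $G$ \emph{not} in $F$ (i.e., vertices in the \emph{interior} $F_j$ or on its boundary) are dominated by $\{u_j,v_j\}$. Vertices in the free face are \emph{not} covered by this property. Since the interiors $F_i$ and $F_j$ are disjoint and $u_i,v_i$ survive in $G_i$, the hubs $u_i,v_i$ lie in the free face of $L_j$, which is exactly the region where domination by $\{u_j,v_j\}$ is \emph{not} guaranteed. So your sentence ``$u_i,v_i$ lie outside $F_j$ \ldots\ and therefore each of $u_i,v_i$ is dominated by $\{u_j,v_j\}$'' has the implication backwards, and there is no reason a priori for an edge between $\{u_i,v_i\}$ and $\{u_j,v_j\}$ to exist.

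There is a second gap even granting such an edge. For the pendant edge $u_ia_i$ (present only when $U_i\neq\emptyset$), you need $u_i$ specifically---not $v_i$---to be adjacent to $\{u_j,v_j\}$, since $a_i$ is pendant. An edge $v_iu_j$ alone would place $u_ia_i$ at distance $2$, not $1$, from $u_jv_j$. The paper handles this by exploiting the full strength of ``nice'': width at least $6$ gives vertices $w,w'$ inside the respective lanterns with controlled neighbourhoods on the boundary cycles, the diameter-$3$ condition forces a short $ww'$-path, and the \emph{deep} property (which you do not use at all) pins down that any such path must pass through the correct hub $u_i$. The argument genuinely requires a case analysis on the types of the two edges, and the deep property is indispensable for the type-$1$ cases.
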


\begin{proof}
 We prove the result by induction on $i\in \sg{0,\ldots,\ell}$. For $i=0$, the result is immediate. We now let $0\leq i<\ell$, and assume that by induction hypothesis, for every $j\leq i$, $R_j$ is a neighbouring set of edges in $\Gamma_j$. We let $e,e'$ be two distinct edges from $\Rip$ and we will show that $e$ and $e'$ are at distance at most $1$ from each other in $\Gamip$. Note that, as $\Gamma_j[V(\Gamip)]$ is a subgraph of $\Gamip$ for every $j\leq i$, if $e,e'\in R_j$ for some $j\leq i$, then the induction hypothesis also implies that $e$ and $e'$ are at distance at most $1$ in $\Gamip$. 
 Note that by definition of $\Rip$, the result is also immediate when both $e$ and $e'$ belong to $E(\Gamip)\setminus E(\Gami)\subseteq \sg{u_iv_i, u_ia_i, v_ib_i}$.

 Hence, we may assume without loss of generality that $e\in E(\Gamip)\setminus E(\Gami)$, and that $e'\in E(\Gamip)\cap E(\Gami)$, and that $e$ and $e'$ do not share a common endvertex. Let $j\leq i$ be such that $e'$ was drawn in the face $F_j$. 
 Note that as $u_iv_i\in E(\Gamip)$, and as $e$ has at least one endvertex in $\sg{u_i, v_i}$, we may assume that $e'$ is neither incident to $u_i$, nor to $v_i$ in $\Gamip$.
 For the same reason, we may also assume that $e'$ is not at distance $1$ from both $u_i$ and $v_i$, so as $L_i$ is a short lantern, it implies that $e'$ is not an edge of $L_i$.
 Hence as $L_i$ is chordless, $e'$ must have at least one endvertex outside $\overline{F_i}$. We let $C$ and $C'$ denote respectively the boundary cycles of $F_i$ and $F_j$ in $\Gip$ (note that as $F_i$ and $F_j$ are disjoint, such facial cycles must exist in $\Gip$). 
 We now distinguish the following different cases:
 \begin{itemize}
  \item[$(1)$] Assume that both $e$ and $e'$ have type $2$, i.e. $e=u_iv_i$ and $e'=u_jv_j$; the situation is then as depicted in \Cref{fig: rouges22}. 
  As both $L_i$ and $L_j$ have width at least $6$, there exists some vertex $w$ (resp. $w'$) that belongs to $L_i-\sg{u_i, v_i}$ (resp. $L_j-\sg{u_j,v_j}$) whose only neighbours in $C$ (resp. in $C'$) are in $\sg{u_i, v_i}$ (resp. in $\sg{u_j, v_j}$). 
  As both $C$ and $C'$ separate $w$ from $w'$ in $G$, note that every path in $G\setminus \sg{u_j,v_j, u_i, v_i}$ connecting $w$ to $w'$ must have length at least $4$. 
  As $G$ has diameter $3$, it implies that there must be an edge $e''$ between the two sets $\sg{u_i, v_i}$ and $\sg{u, v}$ in $G$. By \Cref{clm: ipropGip}, $\Gip$ is an induced subgraph of $G$ so $e''\in E(\Gip)$, and by construction, $\Gip$ is a subgraph of $\Gamip$, so we also have $e''\in E(\Gamip)$, implying that $e$ and $e'$ are at distance $1$ apart in $\Gamip$. 

 \begin{figure}[htb]
  \centering  
  \includegraphics[scale=1.25]{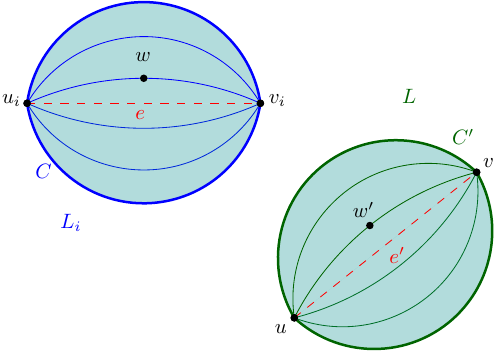}
  \caption{In blue, the lantern $L_i$ and in darkgreen the lantern $L$ when both $e$ and $e'$ have type $2$. Note that $C$ and $C'$ might not necessarily be disjoint.}
\label{fig: rouges22}
\end{figure}
  
 \item[$(2)$] Assume that $e$ has type $1$ and that $e'$ has type $2$. Then $e'=u_jv_j$, and without loss of generality, we may assume that $e=u_ia_i$. Moreover, as $u_jv_j\in E(\Gamip)$, we may also assume that $\sg{u_i, v_i}\cap \sg{u_j,v_j}=\emptyset$. By definition of $\Rip$, it implies that $U_i\neq \emptyset$. As $L_i$ is nice, it is a deep lantern of $G$, so there exists some vertex $w\in U_i$ such that for every $x\in V(C)\setminus \sg{v_i}$ we have $d_G(w, x)=d_{C+u_iw}(w, x)$. As $L_j$ has width at least $6$, there exists some vertex $w'$ in $G$ belonging to $L_j-\sg{u_j,v_j}$ whose neighbours in $C'$ are in $\sg{u_j,v_j}$ (in $G$). We now show that $u_i$ is at distance at most $1$ from $\sg{u_j,v_j}$ in $G$. Note that it is immediate if there exists some path of length $3$ in $G$ connecting $w$ to $w'$ and going through $u_i$. Note also that as both $C$ and $C'$ separate $w$ from $w'$, every path in $G - \sg{u_j, v_j, u_i, v_i}$ connecting $w$ to $w'$ has length at least $4$, hence as $G$ has diameter $3$, we may assume that there exists in $G$ some path $P$ of length $3$ from $w$ to $w'$ avoiding $u_i$. As $v_i\notin \sg{u_j, v_j}$, $v_i$ is at distance at least $2$ from $w'$ in $G$, and as $w\in U_i$, $v_i$ is also at distance at least $2$ from $w$ in $G$, so $P$ does not contain $v_i$. Then the only remaining option is that one of the vertices $u_j$ or $v_j$ belongs to $C\setminus \sg{u_i, v_i}$, and is at distance at most $2$ from $w$ in $G$ (see \Cref{fig: rouges12}). Assume without loss of generality that $u_j$ is such a vertex, and that $d_G(w,u_j)=2$. Then our choice of $w$ implies that $d_{C+u_iw}(w,u_j)=2$, hence $u_i$ and $u_j$ must be adjacent in $G$. In particular, as in case $(1)$, it allows us to conclude in this case that $e$ and $e'$ are also at distance $1$ apart in $\Gamip$.
   
\begin{figure}[htb]
  \centering    
  \includegraphics[scale=1.25]{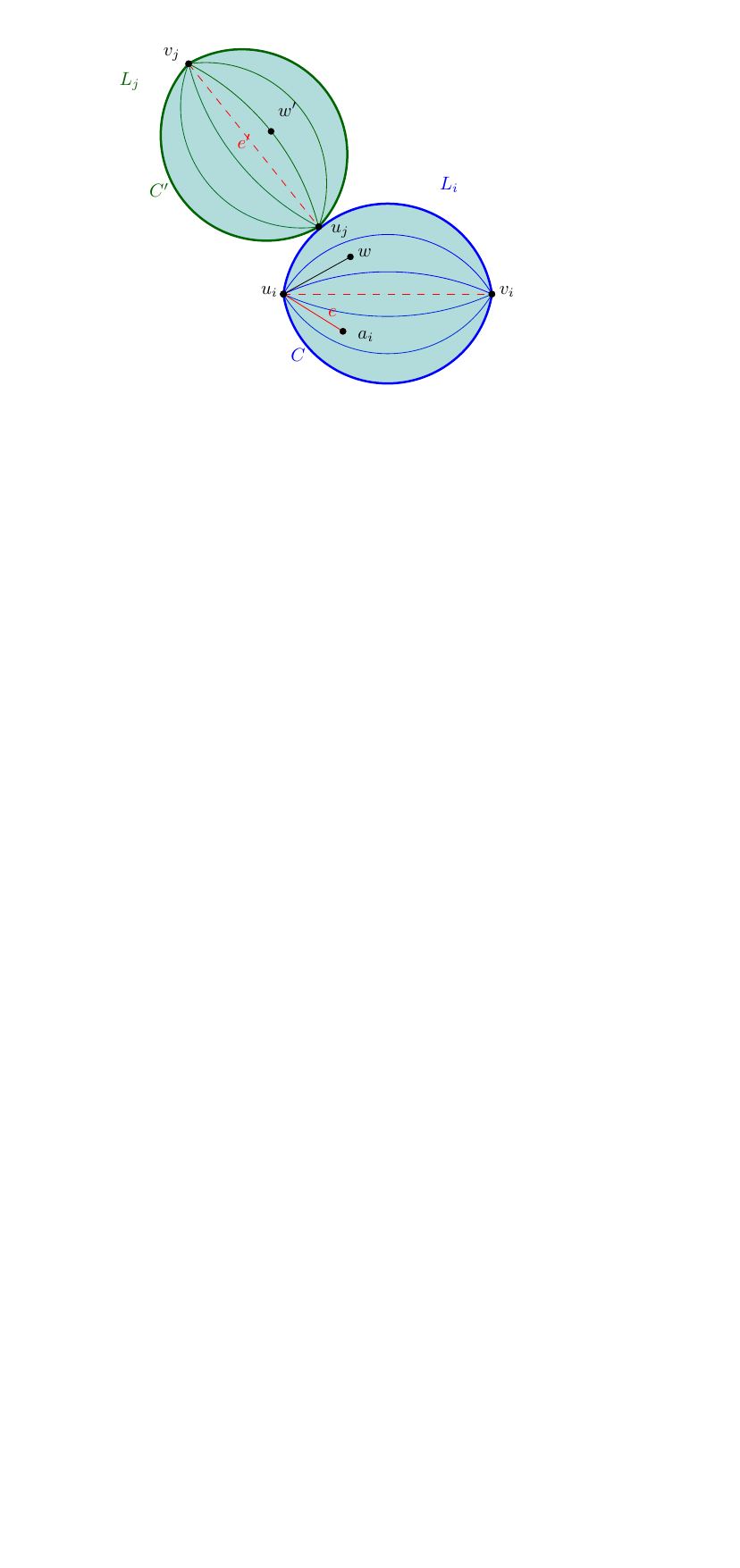}
  \caption{Configuration in the end of the proof of case $(2)$.}
\label{fig: rouges12}  
\end{figure} 

 \item[$(3)$] We claim that the case where $e$ has type $2$ and $e'$ has type $1$ is symmetric to case $(2)$. We eventually assume that both $e$ and $e'$ have type $1$. Without loss of generality, we assume that $e=u_ia_i$, and that $e'=u_ja_j$. We moreover assume that $u_j\neq u_i$, $u_i\neq v_j$ and $u_j\neq v_i$, as otherwise we conclude immediately.
 The arguments from case $(2)$ apply identically, and imply that there exist some vertices $w,w'$ respectively in $U_i$ and $U_j$ such that for every $x\in V(C)\setminus \sg{v_i}$ (resp. $x\in V(C')\setminus \sg{v_j}$) we have $d_G(w, x)=d_{C+u_iw}(w, x)$ (resp. $d_G(w', x)=d_{C'+u_jw'}(w', x)$). We now show that $u_i$ and $u_j$ are at distance at most $1$ in $G$. First, note that as $C$ and $C'$ separate $w$ from $w'$ in $G$, and as $u_i$ and $u_j$ are respectively the only neighbours of $w$ and $w'$ on $C$ and $C'$, every path in $G$ connecting $w$ to $w'$ in $G-\sg{u_i, u_j}$ has length at least $4$. We consider a path $P$ in $G$ of length at most $3$ from $w$ to $w'$ (which exists as $G$ has diameter at most $3$). Note that if both $u_j$ and $u_i$ belong to $P$, then $u_j$ and $u_i$ are at distance at most $1$ in $G$, so we assume that it is not the case. Assume also without loss of generality that $u_i$ is adjacent to $w$ on $P$, the other case ($u$ adjacent to $w'$ on $P$) being symmetric. Then $u_i$ must be at distance $2$ from $w'$ in $G$, and our choice of $w'$ implies in particular that $u_i\in V(C')$ and that $d_{C'+u_jw'}(u_i, w')=2$. In particular, it implies that $u_i$ and $u_j$ must be adjacent in $G$. To conclude, we argue that for the same reasons than in case $(1)$, we must also have $u_iu_j\in E(\Gamip)$, implying that $e$ and $e'$ are at distance $1$ apart in $\Gamip$.
\end{itemize}
 In each case, we were able to show that $e$ and $e'$ are at distance at most $1$ from each other in $\Gamip$, implying that $\Rip$ is indeed a neighbouring set in $\Gamip$.
\end{proof}

\paragraph*{Construction of $\mathbf{(\mu_i)_i}$}
We reuse here all notations from previous paragraphs. We will now define for each $0\leq i\leq \ell$ a fractional matching $\mu_i$ of the graph $\Gami[\Ri]$ that will intuitively allow us to count the number of vertices in $V(G)\setminus V(G_i)$. 
We set, for each $0\leq i\leq \ell$, $X_i:=V(G)\setminus V(G_i)$. Note that the set $X_i$ corresponds exactly to the set of vertices of $G$ drawn in the interiors $F_j$, for $j<i$, and that every vertex from $\Xip \setminus X_i$ is drawn in $F_i$.

\begin{claim}
 \label{clm: viPropfi}
 For each $i\in \sg{0,\ldots,\ell}$, there exists a mapping $f_i: X_i\to \Ri$ such that for each $v\in X_i$, if $f_{i}(v)=xy$, then for every vertex $w\in \sg{x,y}\cap V(G)$, 
  $v\in N_G(w)$.
\end{claim}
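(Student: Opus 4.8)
I want to construct the map $f_i$ by induction on $i$, peeling off one interior $F_{i-1}$ at a time. The base case $i=0$ is vacuous since $X_0=\emptyset$. For the inductive step, suppose $f_i\colon X_i\to R_i$ has been constructed. Recall that $X_{i+1}\setminus X_i$ consists exactly of the vertices of $G$ drawn in the interior $F_i$ of the nice lantern $L_i$, and that these vertices are partitioned into $U_i$, $V_i$, $W_i$ according to whether they are adjacent (among $u_i,v_i$) only to $u_i$, only to $v_i$, or to both (here I use that every vertex of $G\setminus\{u_i,v_i\}$ drawn in $\overline{F_i}$ is dominated by $\{u_i,v_i\}$ since $L_i$ is a dominating lantern — and vertices strictly inside $F_i$ are not on the boundary cycle $C$, so they genuinely lie in $U_i\cup V_i\cup W_i$; the boundary vertices of $C$ other than $u_i,v_i$ are never removed, so they are not in $X_{i+1}$).

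\textbf{Definition of $f_{i+1}$.} On $X_i$, set $f_{i+1}:=f_i$; this is well-defined into $R_{i+1}$ because $R_i\subseteq R_{i+1}$, and the required adjacency condition is inherited from the induction hypothesis (the map $f_i$ was into $R_i$, whose edges are all still edges of $R_{i+1}$, and the condition only refers to $G$-adjacency, which does not change). For a vertex $v\in X_{i+1}\setminus X_i$, i.e.\ a vertex drawn inside $F_i$, I define:
\begin{itemize}
 \item if $v\in W_i$, set $f_{i+1}(v):=u_iv_i$ (this edge is in $R_{i+1}$ by construction; both $u_i,v_i\in V(G)$ and $v$ is adjacent to both, so the condition holds);
 \item if $v\in U_i$, set $f_{i+1}(v):=u_ia_i$ (this edge is in $R_{i+1}$ precisely because $U_i\neq\emptyset$, which holds since $v\in U_i$; of the two endvertices $u_i,a_i$, only $u_i$ lies in $V(G)$, and $v\in N_G(u_i)$ by definition of $U_i$);
 \item if $v\in V_i$, set $f_{i+1}(v):=v_ib_i$ (symmetric: $v_ib_i\in R_{i+1}$ since $V_i\neq\emptyset$, and $v\in N_G(v_i)$).
\end{itemize}

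\textbf{Verification and main subtlety.} The only thing to check is that every $v\in X_{i+1}$ has indeed been assigned an image satisfying the stated property, and that the image lies in $R_{i+1}$. For $v\in X_i$ this is the induction hypothesis together with $R_i\subseteq R_{i+1}$; for $v\in X_{i+1}\setminus X_i$ it is the case analysis above. The one place requiring a little care is ensuring that the three cases $U_i,V_i,W_i$ really do cover all of $X_{i+1}\setminus X_i$ and that the relevant edges $u_ia_i$, $v_ib_i$ were actually added to $R_{i+1}$ — but this is exactly how $R_{i+1}$ was defined (the edge $u_ja_j$ is added iff $U_j\neq\emptyset$, etc.), so each time we need one of these edges, the nonemptiness hypothesis triggering its addition is witnessed by the very vertex $v$ we are trying to map. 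I expect no real obstacle here; the claim is essentially bookkeeping, and the substance is entirely front-loaded into the definitions of $U_i,V_i,W_i$ and of $R_i$. The reason the claim is worth stating separately is that the map $f_i$ will be used in the next step to build the fractional matching $\mu_i$ by pushing the uniform weight $1/\Delta$ from each removed vertex onto its image edge.
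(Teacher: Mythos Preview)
Your proof is correct and follows essentially the same approach as the paper: induction on $i$ with the vacuous base case, extending $f_i$ to $f_{i+1}$ by keeping it unchanged on $X_i$ and mapping each new vertex $v\in X_{i+1}\setminus X_i$ to $u_iv_i$, $u_ia_i$, or $v_ib_i$ according to whether $v\in W_i$, $U_i$, or $V_i$. Your verification that the three cases exhaust $X_{i+1}\setminus X_i$ and that the target edges actually lie in $R_{i+1}$ (precisely because the vertex being mapped witnesses the nonemptiness of $U_i$ or $V_i$) matches the paper's reasoning exactly.
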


\begin{proof}
 We prove the claim by induction on $i\geq 0$. As $X_0=\emptyset$, the result is immediate when $i=0$. Assume now that $0\leq i<\ell$, and that $f_i: X_i\to \Ri$ was already constructed with the desired properties.
 We define $\fip: \Xip \to \Rip$ by setting for every vertex $x\in \Xip$:
 
$$\fip(x)=
\begin{cases}
u_iv_i \text{ if }x\in W_i,\\ 
u_ia_i \text{ if }x\in U_i,\\
v_ib_i \text{ if }x\in V_i,\\
f_i(x) \text{ otherwise}.
\end{cases}$$

Recall that as $L_i$ is a dominating lantern in $\Gi$, every vertex of $\Gi$ which is drawn in $F_i$ is in $W_i\cup U_i \cup V_i$. In particular, as we already observed that vertices of $\Xip\setminus X_i$ are drawn in $F_i$, it implies that $\Xip \setminus (W_i\cup U_i \cup V_i)\subseteq X_i$, hence $\fip$ is well-defined. Moreover, recall that $u_ia_i$ (resp. $v_ib_i$) belongs to $\Rip$ if and only if $U_i\neq \emptyset$ (resp. $V_i\neq \emptyset$), so $\fip$ takes values in $\Rip$.

Now, observe that the definition of $\fip$ together with the induction hypothesis immediately conclude the proof.
\end{proof}

\begin{claim}
 \label{clm: viPropmui}
 For each $i\in \sg{0,\ldots,\ell}$, there exists a fractional matching $\mu_i: \Ri\to [0,1]$ of $\Gami[\Ri]$, such that 
  $$\left|X_i\right|= \left(\sum_{e\in \Ri}\muip(e)\right)\Delta.$$
\end{claim}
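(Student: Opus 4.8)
The plan is to build $\mu_i$ by induction on $i$, extending $\mu_{i-1}$ each time we empty a lantern $L_i$, while using the combinatorial map $f_i$ of \Cref{clm: viPropfi} to keep track of exactly which vertices of $X_i$ are being ``paid for'' by which edge of $R_i$. Concretely, for each edge $e\in R_i$ I would set $\mu_i(e) := \tfrac{1}{\Delta}\,|f_i^{-1}(e)|$, i.e. the weight on $e$ counts (normalised) the number of removed vertices assigned to $e$. Then the value identity $|X_i| = \big(\sum_{e\in R_i}\mu_i(e)\big)\Delta$ is immediate from the fact that $f_i$ is a function from $X_i$ onto (a subset of) $R_i$: summing $|f_i^{-1}(e)|$ over all $e$ counts each element of $X_i$ exactly once. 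The base case $i=0$ is trivial since $X_0=\emptyset$ and $R_0=\emptyset$, so $\mu_0$ is the empty matching of value $0$.

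The real content is checking that this $\mu_i$ is a valid fractional matching of $\Gamma_i[R_i]$, i.e. that for every vertex $z$ of $\Gamma_i[R_i]$ we have $\sum_{e\ni z}\mu_i(e)\le 1$. I would split on the type of $z$. If $z$ is one of the new degree-$1$ vertices $a_j$ or $b_j$, it lies on a single edge of $R_i$ and there is nothing to check. If $z = a_j$'s partner hub, say $z=u_j$, then the edges of $R_i$ incident to $z$ are among $u_jv_j$, $u_ja_j$, and possibly edges $u_jv_k$, $u_jb_k$, $u_ja_k$ etc. coming from other lanterns sharing $u_j$ as a hub — but here the key geometric fact is that the interiors $F_j$ are pairwise disjoint (\Cref{clm: ipropGip}), so the sets $f_j^{-1}(\cdot)$ that live in distinct faces are disjoint subsets of $X_i$; combined with the fact that each vertex removed is counted with weight $\tfrac1\Delta$ and $z=u_j$ has degree at most $\Delta$ in $G$, the sum of all these contributions is at most $\tfrac{1}{\Delta}\cdot(\text{number of removed vertices adjacent to }u_j) \le \tfrac{\deg_G(u_j)}{\Delta}\le 1$. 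The crucial point making this work is precisely the second conclusion of \Cref{clm: viPropfi}: if $f_i(v)=xy$ then $v\in N_G(w)$ for every $w\in\{x,y\}\cap V(G)$, so a removed vertex assigned to an edge incident to $u_j$ is genuinely a neighbour of $u_j$ in $G$, and distinct removed vertices give distinct neighbours. Thus the total weight at $u_j$ is bounded by the normalised degree, hence by $1$.

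I would organise the induction step as follows: assume $\mu_i$ is a valid fractional matching of $\Gamma_i[R_i]$ with $|X_i| = \big(\sum_{e\in R_i}\mu_i(e)\big)\Delta$; define $\mu_{i+1}(e) := \tfrac1\Delta|f_{i+1}^{-1}(e)|$ for all $e\in R_{i+1}$, observing that by the definition of $f_{i+1}$ in the proof of \Cref{clm: viPropfi} this agrees with $\mu_i$ on old edges up to the newly-assigned vertices in $W_i,U_i,V_i$ (all of which lie in $F_i$ and are adjacent to $u_i$, to $u_i$, to $v_i$ respectively). The value identity then follows since $f_{i+1}$ is onto the support and $X_{i+1}=\biguplus_{e}f_{i+1}^{-1}(e)$. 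For the matching constraint at a vertex $z$: the new edges $u_iv_i$, $u_ia_i$, $v_ib_i$ only add weight at $z\in\{u_i,v_i,a_i,b_i\}$; at $a_i,b_i$ this is the only incident edge; at $u_i$ (symmetrically $v_i$) the total weight is $\tfrac1\Delta$ times the number of removed vertices assigned to edges incident to $u_i$, and these are all distinct neighbours of $u_i$ in $G$ by \Cref{clm: viPropfi}, so the total is at most $\deg_G(u_i)/\Delta\le 1$.

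The step I expect to be the main obstacle is the matching-constraint verification at a hub $z=u_j$ that is simultaneously a hub of several lanterns $L_{j_1},\dots,L_{j_t}$: one must argue that the removed vertices assigned to the various edges incident to $u_j$ (over all lanterns it hubs, plus its original incident edges of $R_i$ of type $2$) are pairwise distinct neighbours of $u_j$ in $G$. This is where disjointness of the $F_{j}$'s (so that a removed vertex is assigned via $f_i$ to an edge drawn in the unique face it lies in) and the ``private neighbour'' structure ($U_i$ vs $V_i$ vs $W_i$ being a partition, and the edge $u_ia_i$ only present when $U_i\ne\emptyset$) combine to prevent double-counting; one should also be slightly careful that the edge $u_iv_i$ absorbs exactly the vertices of $W_i$ and is shared between the two hubs, which is fine since $W_i$-vertices are counted once in $f_i^{-1}(u_iv_i)$ and contribute $\tfrac1\Delta$ to the weight at $u_i$ and at $v_i$, both of which they are adjacent to. Once this bookkeeping is set up cleanly, the bound $\le \deg_G(u_j)/\Delta \le 1$ drops out and the induction closes.
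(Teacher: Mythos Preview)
Your approach is correct and essentially identical to the paper's: define $\mu_i(e):=\tfrac{1}{\Delta}|f_i^{-1}(e)|$ and use \Cref{clm: viPropfi} to conclude that for any $v\in V(G)$ the preimages of the edges of $R_i$ incident to $v$ are (disjoint, since $f_i$ is a function) subsets of $N_G(v)$, giving the bound $\sum_{e\ni v}\mu_i(e)\le |N_G(v)|/\Delta\le 1$. Your inductive framing and the appeal to disjointness of the interiors $F_j$ are unnecessary overhead---the paper argues directly for each fixed $i$, and the ``no double-counting'' you worry about is automatic from $f_i$ being a function.
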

\begin{proof}
 We let $(f_i)_{0\leq i<\ell}$ be the sequence of matchings given by \Cref{clm: viPropfi}, and define for each $i\in \sg{0, \ldots, \ell}$ the mapping $\mui: \Ri \to \mathbb{R}^+$ by setting for each $e\in \Ri$ 

$$ \mui(e):=\frac{|f^{-1}_{i}(e)|}{\Delta}.$$
 
 First, we check that $\mui(e)\in [0,1]$ for each $e\in \Ri$. By definition of $\Ri$, $e$ has at least one endvertex $v$ in $V(G)$. In particular, by \Cref{clm: viPropfi}, every vertex $x\in X_i$ such that $f_i(x)=e$ must be adjacent to $v$, hence $f_i^{-1}(e)\subseteq N_G(v)$. Therefore, $|f_i^{-1}(e)|\leq \Delta$, implying that $\mui(e)\in [0,1]$.

In fact, the same arguments also imply that $\mui$ is a fractional matching: let $v\in V(G)$. Again, \Cref{clm: viPropfi} implies that for every edge $e\in \Ri$ which is incident to $v$, we have $f_i^{-1}(e)\subseteq N_G(v)$. Thus in particular, if $e_1, \ldots, e_k$ denote the edges from $\Ri$ incident to $v$, we have

$$\sum_{j=1}^k \mui(e_j) = \sum_{j=1}^k \frac{|f_i^{-1}(e_j)|}{\Delta}\leq \frac{|N_G(v)|}{\Delta}\leq 1.$$

This implies that $\mui$ is indeed a fractional matching. 
To conclude the proof of the claim, observe that

$$\left|X_i\right|
= \sum_{e\in \Ri}|f_i^{-1}(e)|
= \left(\sum_{e\in \Ri}\mui(e)\right)\Delta.$$

\end{proof}

\subsection{Proof of \texorpdfstring{\Cref{thm: main1}}{Theorem~\ref{thm: main1}}}
\label{sec: vidage-2}

\paragraph{Proof overview}
We now consider the sequence $(\Gi, \Gami, \Ri, \mui)_{0 \leq i\leq \ell}$ constructed previously. In particular, by \Cref{clm: ipropGip}, the graph $\Gell$ does not admit any $39$-lantern. As before, we let $\Xell:=V(G)\setminus V(\Gell)$. We will now show that we can find a set $X\subseteq V(\Gell)$ such that $|\Xell\cup X|\leq \tfrac92 \Delta$, and that $|V(\Gell)\setminus X|\leq 9 + 39^3$. Intuitively, we will show that for some vertices $u$ of $\Gell$ with large degree, we can attach a new pendant edge $e$ to $u$ in $\Gamell$, so that $e$ is at distance at most $1$ from every other edge of $\Rell$. We let $R$ denote the set of such pendant edges we added, and $\Gamma$ be the graph obtained from $\Gamell$ after adding every pendant edge from $R$. We will show that $R$ is still a neighbouring set in $\Gamma$, and extend our previous fractional matching $\muell$ to some fractional matching $\mu$ defined on $R\cup \Rell$. We will then show that up to $9+39^3$ vertices, we can map every vertex $v$ of $V(\Gell)$ to some edge $e$ of $R\cup \Rell$ so that $v$ is adjacent to every endvertex of $e$ that belongs to $V(G)$. $X$ will be defined as the set of vertices $V(\Gell)$ which can be mapped in such a way, and we will prove that we have $|\Xell\cup X|\leq \mu(\Gamma, \Rell\uplus R)\cdot \Delta$. The fact that $|V(\Gell)\setminus X|\leq 9+39^3$ relies on the fact that every planar graph of diameter $3$ has a dominating set of constant size \cite{MS96, DGH06}.

\paragraph{Preliminary results}
We reuse in this section all notations from \Cref{sec: vidage-1}.
Before defining the sets $\Gamma$ and $R$, we need a few preliminary results, describing the structure of $\Gell$. Recall that by \Cref{clm: ipropGip}, the graph $\Gell$ does not contain a $39$-lantern. In particular, the following remark immediately follows.

\begin{remark}
 \label{rem: common-neighbours}
 For every two vertices $u,v\in V(\Gell)$, note that we have $|N_{\Gell}(u)\cap N_{\Gell}(v)|< 39$.
\end{remark}
 
The next claim follows from the proof of \cite[Lemma 5.1]{GH02}. 

\begin{claim}[Lemma 5.1 in \cite{GH02}]
 \label{clm: ecc3}
 Every vertex of $\Gell$ of eccentricity $3$ has degree strictly less than $39^3$.
\end{claim}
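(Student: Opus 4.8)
The plan is to analyse the ball of radius $3$ around $v$ in $\Gell$, exploiting the only two structural facts available: $\Gell$ is a plane graph of diameter at most $3$ with no $39$-lantern (\Cref{clm: ipropGip}), and consequently (\Cref{rem: common-neighbours}) any two of its vertices have fewer than $39$ common neighbours. Suppose $v\in V(\Gell)$ has eccentricity $3$ and fix $w$ with $d_{\Gell}(v,w)=3$. Work with the BFS layers $L_i:=\{x: d_{\Gell}(v,x)=i\}$, so $L_0=\{v\}$, $L_1=N_{\Gell}(v)$, $\deg_{\Gell}(v)=|L_1|$, and $w\in L_3$. First I would record the elementary distance facts that get reused throughout: every $u\in L_1$ satisfies $d_{\Gell}(u,w)\in\{2,3\}$ (the lower bound is the triangle inequality); no shortest $u$--$w$ path with $u\in L_1$ passes through $v$ (routing through $v$ costs length $\ge 4$); every vertex adjacent to $w$ lies in $L_2\cup L_3$; every vertex adjacent both to $w$ and to a vertex of $L_1$ lies in $L_2$; in particular $v\notin N(w)$.

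\textbf{Turning many ``route vertices'' into a lantern.} The engine of the argument is the following: if $z_1,\dots,z_k$ are pairwise distinct vertices of $L_2\cap N_{\Gell}(w)$ and we pick for each $i$ a neighbour $u_i\in N_{\Gell}(z_i)\cap L_1$, then the paths $v\,u_i\,z_i\,w$ are internally disjoint, since the internal pairs $\{u_i,z_i\}$ are pairwise disjoint ($L_1$ and $L_2$ are disjoint and the $z_i$ are distinct); they thus form a $k$-lantern with hubs $v,w$, so $k\le 38$. More generally I would show that a large family of short $v$--$w$ routes always yields either $39$ internally disjoint $v$--$w$ paths (impossible, $\Gell$ has no $39$-lantern) or a vertex sharing at least $39$ neighbours with another vertex (impossible by \Cref{rem: common-neighbours}): whenever two routes collide at a vertex $x$, that $x$ is adjacent to the two distinct route-vertices following it, so a single $x$ absorbs at most $38$ routes, and pruning collisions leaves a clean subfamily of size $39$ once the family was large enough.

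\textbf{Counting $L_1$.} Partition $L_1=A\sqcup B$ with $A:=\{u\in L_1:d_{\Gell}(u,w)=2\}$ and $B:=\{u\in L_1:d_{\Gell}(u,w)=3\}$. For $u\in A$, choose a common neighbour $z_u$ of $u$ and $w$; then $z_u\in L_2\cap N(w)$, the set $Z:=\{z_u:u\in A\}$ has size at most $38$ by the paragraph above, and each fibre $\{u\in A:z_u=z\}$ lies in $N(z)\cap N(v)$, of size at most $38$ by \Cref{rem: common-neighbours}; hence $|A|\le 38^2$. For $u\in B$, a shortest $u$--$w$ path has the form $u\,z_u\,z'_u\,w$ with $z_u\in N(u)\setminus\{v\}$ and $d_{\Gell}(z_u,w)=2$, so $z_u\in L_1\cup L_2$. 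If $z_u\in L_1$ then $z_u\in A$ and $u\in N(z_u)\cap N(v)$, so this part has size at most $38\,|A|\le 38^3$. If $z_u\in L_2$, apply one more round of the lantern-extraction engine to the family $\{z_u\}$: each $z_u$ is at distance $2$ from $w$, hence has a common neighbour $t_{z_u}$ with $w$, and has a neighbour in $L_1$, producing $v$--$w$ paths of length $4$; this bounds the number of distinct $z_u$, and then \Cref{rem: common-neighbours} bounds the corresponding $u$'s. Summing the three contributions gives $\deg_{\Gell}(v)=|L_1|<39^3$; the constant reflects three ``levels'' (roughly a factor $39$ per edge of a distance-$3$ path), and keeping the arithmetic strictly below $39^3$ requires using the sharper bound $38$ (rather than $39$) in each application of \Cref{rem: common-neighbours} and no-$39$-lantern.

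\textbf{Expected main obstacle.} The delicate point is the length-$4$ case: unlike length-$\le 3$ routes, two paths $v\,y\,z\,t\,w$ and $v\,y'\,z'\,t'\,w$ can collide either at their first internal vertex ($y=y'$, a common $L_1$-neighbour of two distinct $L_2$-vertices) or through an $L_2$-vertex of one coinciding with a vertex of the other. Each such collision again exposes a vertex with many common neighbours with another vertex, so it is killed by \Cref{rem: common-neighbours}; organising this pruning so that the surviving clean family reaches size $39$ whenever the original family is too large, while simultaneously keeping the totals strictly under $39^3$, is where the real work lies. This is precisely the content of the proof of \cite[Lemma~5.1]{GH02}, which we follow.
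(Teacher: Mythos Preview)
The paper does not supply its own proof here; it simply cites \cite[Lemma~5.1]{GH02}. So there is no line-by-line comparison to make, but your sketch has a concrete gap.

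Your ``engine'' asserts that if $z_1,\dots,z_k\in L_2\cap N_{\Gell}(w)$ are distinct and $u_i\in N_{\Gell}(z_i)\cap L_1$, then the paths $v\,u_i\,z_i\,w$ are internally disjoint. This is false: nothing forces the $u_i$ to be distinct, and if $u_i=u_j$ for $i\neq j$ the two paths share the internal vertex $u_i$. Your bound $|Z|\le 38$ (and hence $|A|\le 38^2$) is therefore unjustified as written, and the length-$4$ case that you correctly identify as the crux is left as a plan (``organising this pruning \ldots\ is where the real work lies'') rather than an argument.

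The fix---and almost certainly the \cite{GH02} argument, visible in miniature in the paper's own proof of \Cref{clm: dist2-neighbours}---is to route everything through a BFS tree $T$ rooted at $w$, restricted to the vertices lying on some shortest $w$--$u$ path with $u\in N_{\Gell}(v)$. Write $D_1',D_2'$ for the depth-$1$ and depth-$2$ vertices of $T$. For each $q\in D_1'$ pick a descendant $u_q\in N_{\Gell}(v)$; the paths $w\text{--}q\text{--}\cdots\text{--}u_q\text{--}v$ are internally disjoint \emph{because distinct children of $w$ have disjoint subtrees in $T$}, so $|D_1'|\le 38$. Repeating the argument with hubs $q$ and $v$ bounds the number of children of each $q$ in $D_2'$ by $38$, whence $|D_2'|\le 38^2$; one more round with hubs $p\in D_2'$ and $v$ shows that at most $38$ elements of $N_{\Gell}(v)$ lie at or below each $p$, giving $\deg_{\Gell}(v)\le 38^3<39^3$. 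The tree structure is exactly what guarantees the internal disjointness your pruning paragraph gestures at but never supplies, and it treats the length-$3$ and length-$4$ routes uniformly instead of splitting into cases $A$, $B_1$, $B_2$.
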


\begin{claim}
\label{clm: dist2-neighbours}
 For every vertex $v\in V(\Gell)$, if $u\in V(\Gell)$ is such that $d_{\Gell}(u,v)= 2$, then the set $\sg{w\in N_{\Gell}(v): d_{\Gell}(w,u)\leq 2}$ has size at most $39^2$.
\end{claim}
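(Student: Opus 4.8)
The plan is to argue with the two facts already available about $\Gell$: it contains no $39$-lantern (\Cref{clm: ipropGip}), and consequently, by \Cref{rem: common-neighbours}, $|N_{\Gell}(x)\cap N_{\Gell}(y)|\le 38$ for all $x,y\in V(\Gell)$. Write $Z:=\{w\in N_{\Gell}(v):d_{\Gell}(w,u)\le 2\}$. First I would split $Z$ as $Z=Z_1\sqcup Z_2$ with $Z_1:=Z\cap N_{\Gell}(u)$ and $Z_2:=Z\setminus N_{\Gell}(u)$. Since $Z_1\subseteq N_{\Gell}(u)\cap N_{\Gell}(v)$, \Cref{rem: common-neighbours} gives $|Z_1|\le 38$. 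Note also that since $d_{\Gell}(u,v)=2$ we have $v\notin N_{\Gell}(u)$, and every $w\in Z_2$ satisfies $d_{\Gell}(w,u)=2$, hence has a common neighbour with $u$.

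The core step is to extract internally disjoint $uv$-paths greedily from $Z_2$. Maintain a set $D$ of ``used'' vertices, initially empty; processing the vertices of $Z_2$ one by one, whenever a vertex $w$ has a common neighbour $x_w$ with $u$ that does not lie in $D$, add $x_w$ to $D$ and call $w$ \emph{good}. For a good vertex $w$, the walk $v,w,x_w,u$ is a genuine $uv$-path of length $3$: indeed $x_w\in N_{\Gell}(u)$ while $w\notin N_{\Gell}(u)$ forces $w\ne x_w$, and $x_w\notin\{u,v\}$ because $\Gell$ is simple and $v\notin N_{\Gell}(u)$. Moreover the paths obtained from distinct good vertices $w,w'$ are internally disjoint: their internal vertex sets are $\{w,x_w\}$ and $\{w',x_{w'}\}$, the vertices $w,w'$ are distinct elements of $N_{\Gell}(v)$, the witnesses $x_w,x_{w'}$ are distinct by construction, and $w\ne x_{w'}$ since $x_{w'}\in N_{\Gell}(u)$ whereas $w\notin N_{\Gell}(u)$. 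Hence if the process ever produces $39$ good vertices, the corresponding $39$ paths form a $39$-lantern of $\Gell$ with hubs $u,v$, contradicting \Cref{clm: ipropGip}. Therefore at most $38$ vertices are good, and in particular $|D|\le 38$.

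Finally I would cash this in. At the end, every $w\in Z_2$ has a neighbour in $D$: if $w$ is good this is its witness $x_w\in D$; if $w$ is not good, then (by the stopping criterion and the fact that $D$ only grows) the set $N_{\Gell}(u)\cap N_{\Gell}(w)$, which is nonempty because $d_{\Gell}(w,u)=2$, is contained in $D$. Thus $Z_2\subseteq\bigcup_{x\in D}N_{\Gell}(x)$, and since $Z_2\subseteq N_{\Gell}(v)$ and each $x\in D\subseteq N_{\Gell}(u)$ satisfies $x\ne v$, \Cref{rem: common-neighbours} gives
\[
|Z_2|\le\sum_{x\in D}|N_{\Gell}(x)\cap N_{\Gell}(v)|\le 38\cdot 38 .
\]
Hence $|Z|=|Z_1|+|Z_2|\le 38+38^2=38\cdot 39=1482<39^2$, which is the claimed bound. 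The only point requiring care is the verification in the middle paragraph that the greedily chosen length-$3$ walks are genuine, pairwise internally disjoint $uv$-paths — in particular that a witness vertex, which lies in $N_{\Gell}(u)$, can never coincide with the $N_{\Gell}(v)$-vertex of another path; once that is settled, the absence of a $39$-lantern and the common-neighbourhood bound combine to force the conclusion, with the constants lining up comfortably.
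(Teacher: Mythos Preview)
Your proof is correct and follows essentially the same approach as the paper's: split $Z$ into $Z_1=N_{\Gell}(u)\cap N_{\Gell}(v)$ (bounded by $38$ via \Cref{rem: common-neighbours}) and $Z_2$, then find at most $38$ ``intermediate'' vertices in $N_{\Gell}(u)$ that together dominate $Z_2$, yielding $|Z_2|\le 38^2$. The paper phrases the second step via a BFS tree of depth~$2$ rooted at $u$ whose leaves are $S_2=Z_2$, whereas you extract the intermediate vertices greedily; both arguments build internally disjoint $uv$-paths of length~$3$ to invoke the no-$39$-lantern property, and both arrive at the same bound $38+38^2<39^2$.
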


\begin{proof} 
 We set $S:=\sg{w\in N_{\Gell}(v): d_{\Gell}(w,u)\leq 2}$ and $k:=|S|$.
 We partition $S$ into two sets by setting $S_1:=N_{\Gell}(u)\cap N_{\Gell}(v)$ and $S_2:= S\setminus S_1$. 
 By \Cref{rem: common-neighbours}, $|S_1|\leq 38$. 
 
 To bound the size of $S_2$, construct a BFS-tree $T$ of depth $2$ rooted in $u$ whose leaves correspond exactly to elements of $S_2$. Note that it is always possible to find such a tree because every shortest path from $u$ to any vertex of $S_2$ avoids both $v$ and every other vertex from $S_2$.
 Again, \Cref{rem: common-neighbours} implies that the internal nodes of $T$ distinct from $u$ are each adjacent to at most $38$ leaves in $T$. Hence $T$ has at least 
 $\frac{|S_2|}{38}$ internal nodes distinct from $u$. As $\Gell$ has no $39$-lantern, $T$ contains at most $38$ edge-disjoint paths from $u$ to the elements of $S_2$, implying that $\frac{|S_2|}{38}\leq 38$.
 
 We thus obtain 
 $$k\leq 38 + 38^2 \leq 39^2.$$
\end{proof}

\begin{claim}
 \label{clm: close-to-hubs}
 Let $L$ be a nice lantern in $G$ with hubs $u,v$, and let $w\in V(G)\setminus \sg{u,v}$ be a vertex drawn in the closure of the free face of $L$. Then $w$ is at distance at most $2$ from the set $\sg{u,v}$ in $G$. Moreover, if $d_G(w,v)\geq 2$ and $U_L\neq \emptyset$ (resp. if $d_G(w,u)\geq 2$ and $V_L\neq \emptyset$), then $d_G(u,w)\leq 2$ (resp. $d_G(v,w)\leq 2$).
\end{claim}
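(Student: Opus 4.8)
The plan is to exploit the two structural features of a nice lantern that are relevant here: first, that $L$ is dominating (so the part of $G$ outside the interior $F$ is dominated by $\{u,v\}$), and second, that $L$ is deep (so a suitable private neighbour of $u$, resp.\ $v$, inside $F$ behaves isometrically with respect to the boundary cycle). Throughout, write $C$ for the boundary cycle of the interior $F$ of $L$, and recall that $C$ is a cycle of order at most $6$ since $L$ is short.

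\textbf{First part: distance at most $2$ to $\{u,v\}$.} Let $w$ be a vertex drawn in $\overline{F'}$, where $F'$ is the free face of $L$, with $w\notin\{u,v\}$. If $w$ is adjacent to $u$ or $v$ we are done, so assume $d_G(w,u),d_G(w,v)\ge 2$. Since $L$ is dominating with interior $F$, and since $w$ lies in the closure of the free face (hence not in the open interior $F$), the vertex $w$ must be adjacent to some vertex of $C$; let $z\in V(C)$ be such a neighbour, so $d_G(w,z)=1$ and $z\notin\{u,v\}$. Now I would use that $L$ is hub-faithful and short: the cycle $C=P\cup Q$ is the union of two axes each of length at most $3$, with a shortest $uv$-path lying in the closure of the free face. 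Because $C$ has order at most $6$ and contains a shortest $uv$-path of length $d_G(u,v)\le 3$ along one of its two sides, every vertex of $C$ is within distance $1$ of $\{u,v\}$ inside $C$; indeed, each axis $P,Q$ has length $\le 3$, so its two internal vertices (if any) are each adjacent to a hub. Hence $d_G(z,\{u,v\})\le 1$, and therefore $d_G(w,\{u,v\})\le d_G(w,z)+d_G(z,\{u,v\})\le 2$, as required.

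\textbf{Second part: the one-sided refinement.} Suppose $d_G(w,v)\ge 2$ and $U_L\ne\emptyset$; I must show $d_G(u,w)\le 2$. By the first part there is a vertex $z\in V(C)\setminus\{u,v\}$ with $wz\in E(G)$ and $d_G(z,\{u,v\})\le 1$. If $z$ is adjacent to $u$ we immediately get $d_G(u,w)\le 2$, so assume $z$ is adjacent to $v$ but not to $u$; in particular $z$ is an internal vertex of one of the two boundary axes, adjacent to $v$ in $C$, so $d_C(u,z)=2$ (the axis has length at most $3$, and $z$ cannot be adjacent to $u$). Here is where deepness enters: since $U_L\ne\emptyset$, there is a vertex $u'\in U_L$ with $d_G(u',x)=d_{G[V(C)]+uu'}(u',x)$ for every $x\in V(C)\setminus\{u,v\}$, and $u$ is the only neighbour of $u'$ on $C$. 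Apply this with $x=z$: in $G[V(C)]+uu'$ a shortest $u'z$-path goes $u'\,u\,\dots\,z$ through $C$, and its length is $1+d_C(u,z)=3$, hence $d_G(u',z)=3$. But $d_G(u',w)\le d_G(u',z)+1$ would only give $4$, which is not directly useful; instead I would argue from the other side. Consider the vertices $w$ and $u'$: both lie in $\overline{F}$ (indeed $u'\in U_L\subseteq F$, and $w\in\overline{F'}$ meets $\overline F$ only along $C$), and any path between them not using $u$ or $v$ must cross $C$ and so have length at least $\dots$; combined with $\diam(G)\le 3$ this forces $d_G(u,w)\le 2$ by a short case analysis on where a length-$\le 3$ path from $u'$ to $w$ can go — it must pass through $u$ (since $d_G(u',v)\ge 2$ as $u'\in U_L$, and $d_G(w,v)\ge 2$ by hypothesis, $v$ cannot be the crossing vertex, and $z$ with $d_G(u',z)=3$ already saturates the budget), giving a $u'$--$u$--$\cdots$--$w$ path of length $\le 3$ and hence $d_G(u,w)\le 2$. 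The symmetric statement with $u,v$ and $U_L,V_L$ swapped follows identically.

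\textbf{Main obstacle.} The delicate point is the second part: making precise that a shortest path of length at most $3$ between the deep private neighbour $u'$ and the external vertex $w$ is forced to route through $u$ rather than through $v$ or through a long stretch of $C$. This requires combining the deepness isometry (to pin down $d_G(u',z)=3$ for the ``wrong-side'' boundary vertex $z$), the hypothesis $d_G(w,v)\ge2$, and the fact $d_G(u',v)\ge 2$ (from $u'\in U_L$), to eliminate every alternative for the crossing vertex on $C$. I expect the bookkeeping of which boundary vertices can be adjacent to $w$, and verifying $d_C(u,z)=2$ in the short chordless lantern, to be the only genuinely fiddly step; everything else is a direct appeal to the definitions of dominating, short, and deep lanterns established in \Cref{sec:empty_lanterns}.
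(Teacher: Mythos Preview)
Your first part rests on a misreading of ``dominating lantern''. The definition says that every vertex of $G$ \emph{not in the free face} (i.e., in the closed interior) is adjacent to $u$ or $v$; it says nothing about vertices in the free face. So your claim that $w$, which lies in the closure of the free face, ``must be adjacent to some vertex of $C$'' is unjustified --- a vertex in the open free face can perfectly well reach $\{u,v\}$ in two steps via another free-face vertex, without touching $C$ at all. The paper's argument for this part is quite different: it uses width $\ge 6$ to locate an interior vertex $w'$ (on a middle axis) having no neighbour on $C\setminus\{u,v\}$, and then argues that a path of length $\le 3$ from $w$ to $w'$ must cross $C$, forcing $d_G(w,\{u,v\})\le 2$.

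Your second part has the right key idea --- exploit deepness to produce $u'\in U_L$ and analyse a shortest $u'$--$w$ path, ruling out $v$ as a crossing vertex because $d_G(u',v)\ge 2$ and $d_G(w,v)\ge 2$ --- and this is exactly what the paper does. But your execution inherits the flaw above (you begin ``By the first part there is a vertex $z\in V(C)\setminus\{u,v\}$ with $wz\in E(G)$'', which is not established), and your case analysis only treats the single vertex $z$ rather than an arbitrary crossing point of the $u'$--$w$ path on $C\setminus\{u,v\}$. The paper's clean way to finish is: if the length-$\le 3$ path $P$ from $w$ to $u'$ avoids $u$ (and hence $v$), then since every vertex of $C\setminus\{u,v\}$ is at distance $\ge 2$ from $u'$, the crossing vertex must be $w$'s neighbour $x$ on $P$; now either $x$ is adjacent to $u$ (done), or $d_{C+uu'}(u',x)=3$, forcing $|P|\ge 4$, a contradiction.
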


\begin{proof}
 Let $F$ denote the interior of $L$, and $C$ its boundary cycle.
 First, note that as $L$ is a short lantern, if $w$ belongs to $C$ then the result is immediate, thus we may assume that $w$ belongs to the free face of $L$.
 
 Assume first for a contradiction that $w$ is at distance $3$ from both $u$ and $v$. 
 As $L$ has width at least $6$, there exists a vertex $w'$ drawn in $F$ such that no vertex of $C\setminus \sg{u,v}$ is adjacent to $w'$. As $G$ has diameter $3$, we then obtain a contradiction as every shortest path from $w$ to $w'$ must intersect $C$.
 
 Assume now without loss of generality that $U_L\neq \emptyset$, and that $d_G(w, v)\geq 2$, the other case being symmetric. As $L$ is deep, there exists some vertex $u'\in U_L$ such that for every $x\in V(C)\setminus \sg{u,v}$, we have $d_G(u',x)=d_{C+uu'}(u',x)$. As $G$ has diameter $3$, there is a path $P$ of length at most $3$ from $w$ to $x$. As $C$ separates $w$ from $u'$, $P$ intersects $C$. Note that if $u$ belongs to $P$, then it immediately implies that $d_G(u,w)\leq 2$, so we assume without loss of generality that $P$ avoids $u$.
 As $u'\in U_L$, $d_G(u',v)\geq 2$, so $P$ cannot contain $v$, and it must intersect $C - \sg{u,v}$. Note that our choice of $u'$ implies that $u'$ is at distance at least $2$ in $G$ from every vertex of $C - \sg{u,v}$, hence if we let $x$ denote the neighbour of $w$ on $P$, $x$ must belong to $C-\sg{u,v}$. In particular, if $x\in N_G(u)$, then we get $d_G(w, u)\leq 2$. On the other hand, if $x$ is not a neighbour of $u$, then we must have $d_G(u', x)=d_{C+uu'}(u',x)=3$, which contradicts our assumption that $P$ has length at most $3$.
\end{proof}

\begin{claim}
 \label{clm: ecc2-big-degree}
 For every vertex $w\in V(\Gell)$ of eccentricity $2$ in $\Gell$, if $\deg_{\Gell}(w)> 2\times 39^2$, then $w$ is at distance at most $1$ from every edge from $\Rell$ in $\Gamell$.
\end{claim}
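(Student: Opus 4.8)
The plan is to fix an edge $e\in\Rell$ and show that $w$ is at distance at most $1$ from it. By the construction of $\Rell$, the edge $e$ is one of $u_jv_j$, $u_ja_j$ or $v_jb_j$ for some $j<\ell$, where $L_j$ is the lantern with hubs $u_j,v_j$ emptied at step $j$, which is nice in $G$ by \Cref{clm: ipropGip}. If $w\in\{u_j,v_j\}$ there is nothing to do: $e$ has an endvertex in $\{u_j,v_j\}$, the edge $u_jv_j$ belongs to $E(\Gamell)$ by construction, and $a_j,b_j$ are pendant, so $d_{\Gamell}(w,x)\le1$ for some endvertex $x$ of $e$. So I may assume $w\notin\{u_j,v_j\}$, and I would first record three facts: (i) $\Gell$ is an isometric subgraph of $G$ (\Cref{clm: ipropGip}), so $d_{\Gell}=d_G$ on $V(\Gell)$; (ii) since the interiors of $L_0,\dots,L_{\ell-1}$ are pairwise disjoint, no vertex of $\Gell$ lies in the interior of $L_j$, hence $w$ and all its $\Gell$-neighbours lie in the closure of the free face of $L_j$; (iii) $\Gell\subseteq\Gamell$, so every distance in $\Gamell$ is at most the corresponding one in $\Gell$.

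Now suppose for contradiction that $w$ is not at distance at most $1$ from $e$ in $\Gamell$. Unwinding the definition (again using $u_jv_j\in E(\Gamell)$ and that $a_j,b_j$ are pendant vertices), this says $d_{\Gamell}(w,u_j)\ge2$ when $e=u_ja_j$, resp.\ $d_{\Gamell}(w,v_j)\ge2$ when $e=v_jb_j$, resp.\ $d_{\Gamell}(w,u_j)\ge2$ and $d_{\Gamell}(w,v_j)\ge2$ when $e=u_jv_j$. By (i), (iii) and $\ecc_{\Gell}(w)=2$, each such distance equals $2$ in $\Gell$. I would then apply \Cref{clm: dist2-neighbours} with $v:=w$ and $u$ equal to the relevant hub(s): at most $39^2$ neighbours of $w$ in $\Gell$ are at $\Gell$-distance at most $2$ from a given such hub. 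Since $\deg_{\Gell}(w)>2\cdot39^2$, in the case $e=u_jv_j$ this produces a neighbour $w'$ of $w$ with $d_{\Gell}(w',u_j)\ge3$ and $d_{\Gell}(w',v_j)\ge3$; but $w'$ lies in the closure of the free face of $L_j$ and $w'\notin\{u_j,v_j\}$, so \Cref{clm: close-to-hubs} forces $d_G(w',\{u_j,v_j\})\le2$, a contradiction.

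It remains to treat $e=u_ja_j$ (the case $e=v_jb_j$ being symmetric), where the definition of $\Rell$ gives $U_{L_j}\ne\emptyset$. The same counting, applied only to $u_j$, yields more than $39^2$ neighbours $w'$ of $w$ in $\Gell$ with $d_{\Gell}(w',u_j)\ge3$. Fix such a $w'$ with $w'\ne v_j$. Then \Cref{clm: close-to-hubs} gives $d_G(w',\{u_j,v_j\})\le2$, hence $d_G(w',v_j)\le2$; and if $d_G(w',v_j)=2$, the ``moreover'' part of \Cref{clm: close-to-hubs} (using $U_{L_j}\ne\emptyset$) gives $d_G(u_j,w')\le2$, contradicting $d_{\Gell}(w',u_j)\ge3$. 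So $d_{\Gell}(w',v_j)\le1$, i.e.\ $w'\in N_{\Gell}(v_j)$ once we discard the at most one exceptional vertex $w'=v_j$. Thus $w$ and $v_j$ have at least $39^2$ common neighbours in $\Gell$, contradicting \Cref{rem: common-neighbours}.

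I expect the main obstacle to be the bookkeeping in the last two paragraphs: first, translating ``$w$ is at distance at most $1$ from $e$ in $\Gamell$'' into the clean condition $d_{\Gamell}(w,u_j)\le1$, which genuinely uses that $u_jv_j$ was inserted into $\Gamell$ and that $a_j,b_j$ are pendant; and second, isolating the degenerate case $w'=v_j$ carefully enough that the surviving common neighbours of $w$ and $v_j$ still contradict \Cref{rem: common-neighbours}. The topological point (ii) — that $V(\Gell)$ sits in the closure of the free face of each emptied lantern — also deserves a one-line justification from the disjointness of the interiors $F_j$ established in \Cref{clm: ipropGip}, since it is exactly what licenses the use of \Cref{clm: close-to-hubs}.
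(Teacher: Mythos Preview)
Your proof is correct and follows essentially the same approach as the paper's: both split on the type of $e$, invoke \Cref{clm: close-to-hubs} to force neighbours of $w$ close to a hub, and then combine \Cref{clm: dist2-neighbours} with \Cref{rem: common-neighbours} to reach a contradiction. The only difference is the order of application --- in the type-$2$ case the paper first uses \Cref{clm: close-to-hubs} on all neighbours and then pigeonholes into \Cref{clm: dist2-neighbours}, whereas you first bound via \Cref{clm: dist2-neighbours} and then find a single bad neighbour violating \Cref{clm: close-to-hubs}; similarly in the type-$1$ case the paper bounds the ``adjacent to $v_j$'' neighbours first while you bound the ``close to $u_j$'' neighbours first --- but the arithmetic and the lemmas used are identical.
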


\begin{proof}
 Let $w$ be a vertex of eccentricity $2$ in $\Gell$ and let $e\in \Rell$. We set $k:=\deg_{\Gell}(w)$ and assume that $k>2\times 39^2$. As in \Cref{sec: vidage-1}, we say that $e$ has \emph{type $j$} for $j\in \sg{1,2}$ if it has exactly $j$ endvertices in $V(G)$. By definition of $\Rell$, every edge from $\Rell$ has either type $1$ or $2$.
 
 \begin{itemize}
  \item Assume first that $e$ has type $2$, and write $e=u_iv_i$, for some $i\in \sg{0, \ldots, \ell-1}$. As $w$ is a vertex of $\Gell$, it is not drawn in $F_{i}$. Assume for a contradiction that $w$ is at distance at least $2$ from both $u_i$ and $v_i$ in $\Gamell$, and thus also in $\Gell$. As $w$ has eccentricity $2$ in $\Gell$, it implies that $d_{\Gell}(u,w)=d_{\Gell}(v,w)=2$. In particular, as $L_{i}$ is dominating in $G$, $w$ does not belong to the cycle boundary of $F_i$ and  its neighbours in $\Gell$ also belong to the closure of the free face of $L_i$.
  Hence, by \Cref{clm: close-to-hubs} every neighbour of $w$ in $\Gell$ is at distance at most $2$ from at least one of the two vertices $u_i,v_i$. Assume without loss of generality that $\ceil{\frac{k}{2}}$ neighbours of $w$ are at distance at most $2$ from $u_i$. Then \Cref{clm: dist2-neighbours} implies that
  
  $$\ceil{\frac{k}{2}}\leq 39^2.$$
  
  In particular, we obtain a contradiction as we assumed that $k> 2\times39^2$.
  
  \item Assume now that $e$ has type $1$, and write $e=u_ia_i$ for some $i\in \sg{0,\ldots, \ell-1}$, with $u_i\in V(G)$ and $a_i\in V(\Gamell)\setminus V(G)$. In particular, $U_i\neq \emptyset$. First, observe that if $w=v_i$, then by definition of $\Gamell$, we must have $u_iv_i\in E(\Gamell)$, so $w$ is adjacent to the edge $e$ in $\Gamell$ and there is nothing to prove, hence we assume that $w\neq v_i$. Our goal now is to show that $d_{\Gamell}(u_i,w)\leq 1$. 
  We assume for a contradiction that it is not the case.
  As $w$ has eccentricity $2$ in $\Gell$, we then have $d_{\Gell}(u_i,w)=2$, and as $\Gell$ is an isometric subgraph of $G$, we also have $d_G(u_i,w)=2$.
  Recall that no vertex of $\Gell$ is drawn in $F_i$, so \Cref{clm: close-to-hubs} implies that every vertex of $N_{\Gell}(w)\setminus \sg{v_i}$ is either adjacent to $v_i$ or lies at distance at most $2$ from $u_i$ in $G$, and thus also in $\Gell$. By \Cref{rem: common-neighbours}, there are at most $38$ vertices from $N_{\Gell}(w)\setminus \sg{v_i}$ that are also neighbours of $v_i$, hence at least $k-39$ vertices from $N_{\Gell}(w)$ must be at distance at most $2$ from $u_i$ in $\Gell$. In particular, \Cref{clm: dist2-neighbours} implies that
  
  $$k-39\leq 39^2.$$

  We then obtain a contradiction as we assumed that $k>2\times 39^2$.
 \end{itemize}
\end{proof}

\paragraph*{Construction of $\mathbf{(\Gamma, R)}$}
We now have everything we need to construct $(\Gamma, R)$. We let $W$ be the set of vertices $w\in V(\Gamell)\cap V(\Gell)$ 
satisfying one of the following conditions
\begin{itemize}
 \item $w$ is a vertex of eccentricity $1$ in $\Gell$,
 \item $w$ is a vertex of eccentricity $2$ in $\Gell$ and degree strictly greater than $2\times39^2$. 
\end{itemize}
We define the graph $\Gamma$ from $\Gamell$ after attaching a pendant edge $wc$ to every vertex $w\in W$, where $c$ denotes a new vertex distinct form the ones of $\Gamell$. We let $R:=E(\Gamma)\setminus E(\Gamell)$ denote the set of such pendant edges $wc$. 
Note that as $\Gamell$ is planar, $\Gamma$ is also planar. 

\begin{claim}
\label{clm: xPropR}
The set $\Rell\cup R$ is a neighbouring set of edges in $\Gamma$.
\end{claim}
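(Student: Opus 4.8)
\textbf{Plan for the proof of Claim~\ref{clm: xPropR}.}
The goal is to show that any two edges of $\Rell \cup R$ are at distance at most $1$ in $\Gamma$. We already know from \Cref{clm: vNeighbouring} that $\Rell$ is a neighbouring set of edges in $\Gamell$, and $\Gamell$ is a subgraph of $\Gamma$ with all of $V(\Gamma)\setminus V(\Gamell)$ being degree-$1$ vertices, so two edges of $\Rell$ stay at distance at most $1$ in $\Gamma$. Hence there are two remaining cases: (i) one edge lies in $R$ and the other in $\Rell$; (ii) both edges lie in $R$. The plan is to handle case (i) directly using \Cref{clm: ecc2-big-degree}, and case (ii) by combining the eccentricity information with \Cref{rem: common-neighbours}.

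\textbf{Case (i): $e\in R$ and $f\in\Rell$.}
Write $e = wc$ with $w\in W$ and $c$ a new pendant vertex; note every endvertex of $e$ except $c$ is just $w$, so $e$ is at distance at most $1$ from $f$ iff $w$ is at distance at most $1$ from $f$ in $\Gamma$ (equivalently in $\Gamell$, since the only new vertices are pendant). If $w$ has eccentricity $1$ in $\Gell$, then $w$ is adjacent in $\Gell$ to every other vertex of $\Gell$; in particular $w$ is adjacent to at least one endvertex of any edge $f$ of $\Rell$ that has an endvertex in $V(\Gell)$, and every edge of $\Rell$ does (by construction $R_\ell$-edges have type $1$ or $2$, so at least one endvertex lies in $V(G)\subseteq V(\Gell)$, unless that endvertex equals $w$ itself, in which case $w\in f$ and we are done). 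So $w$ is at distance at most $1$ from $f$. If instead $w$ has eccentricity $2$ in $\Gell$ and degree $> 2\times 39^2$, then \Cref{clm: ecc2-big-degree} says exactly that $w$ is at distance at most $1$ from every edge of $\Rell$ in $\Gamell$, hence in $\Gamma$. This settles case (i).

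\textbf{Case (ii): $e = w c,\ e' = w' c'$ with $w,w'\in W$.}
It suffices to show $d_\Gamma(w,w')\le 1$, i.e.\ $w=w'$ or $ww'\in E(\Gell)$. If either vertex has eccentricity $1$ in $\Gell$ then it is adjacent to $w'$ (resp. $w$), done. So assume both $w$ and $w'$ have eccentricity $2$ and degree $>2\times 39^2$ in $\Gell$. Suppose for contradiction $w\ne w'$ and $ww'\notin E(\Gell)$; since $w$ has eccentricity $2$, this forces $d_{\Gell}(w,w') = 2$, so $w$ and $w'$ have a common neighbour, and more generally every vertex of $N_{\Gell}(w')$ is at distance at most $2$ from $w$ in $\Gell$ (being adjacent to $w'$). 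By \Cref{clm: dist2-neighbours} applied with the pair $(w, w')$ (here $d_{\Gell}(w,w')=2$), the set $\{x\in N_{\Gell}(w') : d_{\Gell}(x,w)\le 2\}$ has size at most $39^2$. But this set is all of $N_{\Gell}(w')$, which has size $> 2\times 39^2 > 39^2$, a contradiction. Hence $w=w'$ or $ww'\in E(\Gell)$, and $e,e'$ are at distance at most $1$ in $\Gamma$.

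\textbf{Main obstacle.}
The conceptual content is entirely packaged into the earlier claims (\Cref{clm: dist2-neighbours} and \Cref{clm: ecc2-big-degree}), so the main delicate point is purely bookkeeping: making sure that in every case at least one endvertex of the $\Rell$-edge we compare against actually lies in $V(\Gell)$ (so that ``distance in $\Gamell$'' and ``distance in $\Gamma$'' coincide and the eccentricity/common-neighbour bounds apply), and correctly matching up the hypotheses of \Cref{clm: dist2-neighbours} (which needs $d_{\Gell}(u,v)=2$ exactly) with the eccentricity-$2$ assumption. The threshold $2\times 39^2$ in the definition of $W$ is exactly what is needed to beat the $39^2$ bound with room to spare for the $39$ common neighbours subtracted off in the type-$1$ subcase of \Cref{clm: ecc2-big-degree}; I would double-check that $W$ was defined with this constant and that no off-by-one creeps in.
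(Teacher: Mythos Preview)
Your proof is correct and follows essentially the same approach as the paper: reduce to \Cref{clm: vNeighbouring} when both edges lie in $\Rell$, invoke \Cref{clm: ecc2-big-degree} for the mixed case, and use \Cref{clm: dist2-neighbours} plus the eccentricity-$2$ hypothesis to force $d_{\Gell}(w,w')\le 1$ when both edges lie in $R$.

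Two small inaccuracies worth cleaning up: in case~(i) you write ``$V(G)\subseteq V(\Gell)$'', which is false (many vertices of $G$ were deleted); the correct statement is that every edge of $\Rell$ has an endvertex among the hubs $u_j,v_j$, and these hubs do lie in $V(\Gell)$. In case~(ii), the parenthetical ``(being adjacent to $w'$)'' does not justify $d_{\Gell}(x,w)\le 2$ (triangle inequality only gives $3$); the correct reason, which you have already assumed, is that $w$ itself has eccentricity $2$ in $\Gell$.
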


\begin{proof}
 We let $e, e'$ be two distinct edges from $\Rell\uplus R$, and show that $e$ and $e'$ must lie at distance at most $1$ from each other in $\Gamell$. Note that it follows immediately from \Cref{clm: vNeighbouring} when $e,e'\in \Rell$, hence we may assume that $e\in R$ and write $e=wc$, with $w\in W$ and $c\in V(\Gamma)\setminus V(\Gamell)$.

Assume first that $e'\in \Rell$. Then, $e'$ has at least one endvertex in $V(\Gi)$. In particular, if $w$ has eccentricity $1$ in $\Gell$, $e$ and $e'$ are clearly at distance at most $1$ from each other in $\Gell$, and thus also in $\Gamma$.
Assume now that $w$ has eccentricity $2$ in $V(\Gell)$ and degree strictly greater than $2\times39^2$. Then \Cref{clm: ecc2-big-degree} implies that $w$ is at distance at most $1$ from $e'$ in $\Gamell$, so we are done when $e'\in \Rell$.

We now assume that $e'\in R$ and write $e'=w'c'$ with $w'\in W$ and $c'\in V(\Gamma)\setminus V(\Gamell)$. Again, if one of the two vertices $w$ and $w'$ has eccentricity $1$ in $\Gell$, then $e$ and $e'$ clearly lie at distance at most $1$ from each other in $\Gamma$. We now assume that both $w$ and $w'$ have eccentricity $2$ and degree strictly greater than $2\times39^2$. Then 
applying \Cref{clm: dist2-neighbours} with $w$ and $w'$ playing respectively the roles of $u$ and $v$ implies that $w$ and $w'$ must lie at distance at most $1$ from each other in $\Gell$, and thus also in $\Gamma$, so we are also done when $e'\in R$.
\end{proof}

\paragraph*{Construction of $\mathbf{\mu}$}
We now let $X$ be the set of vertices from $V(\Gell)\cap V(\Gamma)$ incident to some edge of $R$.

\begin{claim}
 \label{clm: xiPropf}
 There exists a mapping $f: \Xell\cup X\to \Rell\cup R$ such that for each $v\in \Xell\cup X$, if $f(v)=xy$, then for every vertex $w\in \sg{x,y}\cap V(G)$, $v\in N_G(w)$.
\end{claim}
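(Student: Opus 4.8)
The plan is to extend the map $f_{\ell}\colon \Xell\to \Rell$ from \Cref{clm: viPropfi} (which already satisfies the required adjacency property on $\Xell$) to the vertices in $X$, by routing each $x\in X$ to one of the new pendant edges of $R$. Recall that $x\in X$ means $x$ is a vertex of $V(\Gell)$ incident to some edge of $R$; but by construction of $\Gamma$, the only vertices of $V(\Gell)$ incident to edges of $R$ are precisely the vertices of $W$, and each $w\in W$ is incident to exactly one pendant edge $wc\in R$. So in fact $X=W$, and there is a \emph{canonical} choice: for $w\in W$, set $f(w):=wc$ where $wc$ is the unique edge of $R$ incident to $w$.

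With this definition, I would check the adjacency condition. If $v=w\in X=W$ and $f(v)=wc$, the endvertices of $f(v)$ are $w$ and $c$; since $c\in V(\Gamma)\setminus V(\Gamell)\subseteq V(\Gamma)\setminus V(G)$, the only endvertex lying in $V(G)$ is $w$ itself (note $w\in V(\Gell)\subseteq V(G)$). The condition then requires $v\in N_G(w)$, i.e. $w\in N_G(w)$ — which is false, since graphs here are loopless. So the naive ``route $w$ to its own pendant edge'' fails the literal statement; instead one must route $w$ to a \emph{different} edge of $\Rell\cup R$ one of whose $G$-endpoints is a genuine $G$-neighbour of $w$. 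This is exactly where \Cref{clm: ecc2-big-degree} and the structure of $W$ come in: every $w\in W$ has eccentricity $1$ or $2$ in $\Gell$. If $w$ has eccentricity $1$, then $w$ is adjacent in $\Gell$ (hence in $G$) to every other vertex of $\Gell$, in particular to a $G$-endpoint of any edge of $\Rell\cup R$ that has such an endpoint distinct from $w$; and if $\Rell\cup R\neq\{wc\}$ there is such an edge, so set $f(w)$ to be one of them. If $w$ has eccentricity $2$ and degree $>2\times 39^2$, then by \Cref{clm: ecc2-big-degree} $w$ lies at distance at most $1$ from every edge of $\Rell$ in $\Gamell$; I would argue that in fact $w$ has a $G$-\emph{neighbour} among the $V(G)$-endpoints of some edge of $\Rell\cup R$. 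Here one uses that $w$ itself, having large degree, is adjacent in $G$ to many vertices; and that $R\cup\Rell$ was built so that every large-degree vertex gets a pendant edge — so pick $w'\in W$ with $w'\in N_G(w)$ (which exists because... ) and route $f(w):=w'c'$.

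The main obstacle is precisely this last point: ensuring that for each $w\in W$ of eccentricity $2$ there is an edge $e\in \Rell\cup R$ with a $G$-endpoint in $N_G(w)\setminus\{w\}$. One clean way around it: observe that if $w\in W$ has eccentricity $2$ and $\deg_{\Gell}(w)>2\times 39^2$, then in particular $\deg_G(w)\geq\deg_{\Gell}(w)>2\times39^2>0$, so $w$ has a $G$-neighbour $z$; if $z$ happens to be a hub $u_j$ or $v_j$ of some $L_j$ (so an endpoint of a type-$2$ edge of $\Rell$, or of a type-$1$ edge if the corresponding $U_j/V_j$ is nonempty), route $f(w)$ to that edge; otherwise $z\in V(\Gell)$, and I would argue that among the $>2\times 39^2$ $G$-neighbours of $w$, the analysis in \Cref{clm: ecc2-big-degree,clm: dist2-neighbours} forces at least one to be incident to an edge of $\Rell\cup R$ (since the set of $G$-neighbours of $w$ \emph{not} covered by such edges is bounded by a function of $39$, while $w$ has more neighbours than that). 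I expect the author's actual proof to make a uniform choice: map every $v\in X$ to an edge of $R$ incident to a high-degree \emph{neighbour}, exploiting that $R$ was designed exactly so that each such neighbour carries a pendant edge; the bookkeeping (verifying that the chosen edge's $G$-endpoints are all $G$-adjacent to $v$, which for a pendant edge $w'c'$ means just checking $w'\in N_G(v)$) is then routine. The only genuinely delicate step is confirming the existence of such a high-degree neighbour for every $v$, and that is where the diameter-$3$ plus no-$39$-lantern hypotheses, via \Cref{clm: close-to-hubs,clm: dist2-neighbours}, are used.
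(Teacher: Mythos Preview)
You correctly spot that under the literal reading $X=W$ the naive self-routing $w\mapsto wc$ fails the adjacency condition (it would require $w\in N_G(w)$). But your repair attempt is both overly elaborate and incomplete: the step you yourself flag as ``genuinely delicate'' --- producing, for every $w\in W$, an edge of $\Rell\cup R$ with a $G$-endpoint that is a \emph{genuine} $G$-neighbour of $w$ --- is never established. For instance, if $\Rell=\emptyset$ you would need every $w\in W$ to have some \emph{other} $w'\in W$ with $ww'\in E(G)$, and nothing in the construction guarantees this; your parenthetical ``(which exists because\dots)'' is left unfinished for a reason.

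The paper's proof is a two-liner and avoids this entirely: for each $x\in X$ it sets $f(x):=wc$ where $w\in W\cap N_{\Gell}(x)$. The only $G$-endpoint of $wc$ is $w$, and since $x\in N_{\Gell}(w)\subseteq N_G(w)$ the adjacency condition is immediate. This definition of $f$ presupposes that every $x\in X$ has a $\Gell$-neighbour in $W$, which holds automatically if one reads $X$ as the set of vertices of $\Gell$ \emph{adjacent} to some edge of $R$ (equivalently $X=N_{\Gell}(W)$) rather than \emph{incident} to one. That reading is also forced by the subsequent use of $X$ in the proof of \Cref{clm: xiiPropmu}, where the inclusion $N_{\Gell}(W\cap D)\subseteq X$ is invoked. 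So the word ``incident'' in the paper's definition of $X$ is almost certainly a slip for ``adjacent''; once you make that correction, the claim is trivial and none of the machinery you bring in (\Cref{clm: ecc2-big-degree}, \Cref{clm: dist2-neighbours}, \Cref{clm: close-to-hubs}) is needed.
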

\begin{proof}
 We define a mapping $f: \Xell\cup X\to \Ri\cup R$ as follows. We consider $f_{\ell}: \Xell \to \Rell$ be the mapping given by \Cref{clm: viPropfi},
 and for each $x\in \Xell$, we set $f(x):=f_{\ell}(x)$. Moreover, for each $x\in X$, we let $f(x)$ be any edge $e\in R$ such that $e=wc$ with $w\in W\cap N_{\Gell}(x)$ and $c\in V(\Gamma)\setminus V(\Gamell)$ (take an arbitrary such edge if there are multiple candidates for $e$). 
 
 The fact that $f$ satisfies the desired property follows immediatly from its definition, and from \Cref{clm: viPropfi}.
\end{proof}

\begin{claim}
 \label{clm: xiiPropmu}
 There exists a fractional matching $\mu: \Rell \cup R \to [0,1]$ of $\Gamma[\Rell\cup R]$ such that 
 $$|\Xell \cup X|\leq \left(\sum_{e\in \Rell\cup R}\mu(e)\right)\Delta$$
 and
 $$|V(\Gell)\setminus X|\leq 9+39^3.$$
\end{claim}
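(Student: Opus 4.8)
The plan is to define $\mu$ from the mapping $f$ of Claim~\ref{clm: xiPropf} exactly as $\mu_i$ was defined from $f_i$ in Claim~\ref{clm: viPropmui}, namely by setting $\mu(e) := |f^{-1}(e)|/\Delta$ for each $e \in \Rell \cup R$. The verification that $\mu$ is a fractional matching of $\Gamma[\Rell \cup R]$ is then essentially identical to the argument in Claim~\ref{clm: viPropmui}: for any vertex $w$ of $G$, Claim~\ref{clm: xiPropf} guarantees that every edge $e$ incident to $w$ with $\mu(e)>0$ has $f^{-1}(e) \subseteq N_G(w)$, and these preimages are pairwise disjoint, so $\sum_{e \ni w} \mu(e) \le |N_G(w)|/\Delta \le 1$. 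For vertices not in $V(G)$ (the degree-$1$ vertices $a_j, b_j, c$), each is incident to a single edge $e$ of $\Rell \cup R$, and $|f^{-1}(e)| \le \Delta$ for the same reason (the edge has an endvertex in $V(G)$ adjacent to all of $f^{-1}(e)$), so the constraint holds there too. Summing, $|\Xell \cup X| = \sum_{e} |f^{-1}(e)| = \left(\sum_e \mu(e)\right)\Delta$, which in particular gives the first displayed inequality (with equality, but the statement only asks for $\le$; one should double-check that $f$ is injective on $\Xell \cup X$, or else the displayed equality becomes an inequality, which is still fine).

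The substantive part is the second inequality $|V(\Gell) \setminus X| \le 9 + 39^3$. Here the plan is to show that every vertex of $\Gell$ not in $X$ lies in a small set controlled by the domination number of a planar graph of diameter $3$. By definition $X$ contains every vertex incident to an edge of $R$, i.e. every vertex of $W$: all vertices of eccentricity $1$ in $\Gell$, and all vertices of eccentricity $2$ and degree $> 2 \times 39^2$. Since $\Gell$ is an isometric subgraph of $G$ (hence has diameter at most $3$), every vertex of $\Gell$ has eccentricity $1$, $2$, or $3$. The eccentricity-$3$ vertices number at most... no — rather, by Claim~\ref{clm: ecc3} each has degree $< 39^3$, and by the MacGillivray--Seyffarth / Dorfling--Goddard--Henning bound $\Gell$ (being planar of diameter $\le 3$) has a dominating set $D$ of size at most $7$ (or the relevant constant; $|D| \le 7$, and one gets $9$ after a small slack, or possibly the constant is absorbed differently). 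Then I would argue: every vertex of $\Gell \setminus X$ has eccentricity $2$ or $3$. A vertex of eccentricity $3$ has degree $< 39^3$; a vertex of eccentricity $2$ not in $X$ has degree $\le 2 \times 39^2 < 39^3$. So $V(\Gell) \setminus X$ consists of vertices of degree $< 39^3$, and each such vertex is within distance... hmm, that alone is not enough to bound the count.

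The cleaner route, which I expect to be the intended one: take a dominating set $D$ of $\Gell$ with $|D| \le 7$ (the bound from \cite{DGH06}). I would show that any vertex $w \in V(\Gell) \setminus X$ together with its role as a potential dominator is limited — more precisely, if some $d \in D$ has eccentricity $1$ then it dominates everything, forcing $|V(\Gell)| \le \Delta + 2 + |R$-stuff$|$, a degenerate case handled separately (cf. Remark~\ref{rem: preliminary}); otherwise every $d \in D$ has eccentricity $2$ or $3$, hence either lies in $X$ (if eccentricity $2$ and large degree) or has degree $< 39^3$. Now every vertex of $\Gell$ is dominated by some $d \in D$, so $V(\Gell) \subseteq D \cup \bigcup_{d \in D} N_{\Gell}(d)$; among the $d$'s, at most $|D| \le 7$ contribute (and we may drop the ones in $X$), the ones not in $X$ have degree $< 39^3$, giving $|V(\Gell) \setminus X| \le |D \setminus X| + \sum_{d \in D \setminus X} |N_{\Gell}(d)| \le 7 + 7 \cdot 39^3$ — this overshoots, so the real argument must be sharper, presumably: the dominating set can be taken of size $\le 2$ (or we exploit that a vertex dominating a large degree-portion forces eccentricity $1$ or lands in $X$), cutting the bound down to $9 + 39^3$. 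The main obstacle is precisely pinning down this counting step — matching the exact constant $9 + 39^3$ — which requires carefully combining the domination-number bound with Claims~\ref{clm: ecc3} and \ref{clm: ecc2-big-degree} and peeling off the degenerate "one vertex dominates almost all of $\Gell$" case via Remark~\ref{rem: preliminary}.
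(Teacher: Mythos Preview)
Your construction of $\mu$ from $f$ via $\mu(e):=|f^{-1}(e)|/\Delta$ and the verification that it is a fractional matching are exactly what the paper does, and your derivation of the first inequality is correct.

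For the second inequality your overall strategy is also the paper's: take a dominating set $D$ of $\Gell$ (the paper uses the Dorfling--Goddard--Henning bound $|D|\le 9$, not $7$), observe that every $d\in D\setminus W$ has degree at most $39^3$ in $\Gell$ (eccentricity~$2$ with small degree, or eccentricity~$3$ via \Cref{clm: ecc3}), and conclude that the vertices not covered by $N_{\Gell}(W\cap D)$ are few. Two small corrections to your outline: you do not need \Cref{rem: preliminary} for eccentricity-$1$ vertices of $\Gell$, since such vertices lie in $W$ by definition; and for the final inclusion $V(\Gell)\setminus X\subseteq V(\Gell)\setminus N_{\Gell}(W\cap D)$ to hold you need $X$ to contain $N_{\Gell}(W)$, not just $W$ itself (the paper's definition of $f$ on $X$ in \Cref{clm: xiPropf} is consistent with this reading).

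Your worry that the counting ``overshoots'' to $|D|+|D|\cdot 39^3$ rather than $9+39^3$ is in fact not resolved by the paper: its written argument bounds $|S\setminus D|$ by summing the degrees of the vertices in $D\setminus W$, which naively gives $9\cdot 39^3$, yet the paper records $9+39^3$. So you should not search for a genuinely sharper mechanism---there is none in the paper---and the discrepancy is cosmetic for the purposes of \Cref{thm: main}, which only requires some absolute constant. Your plan, with the domination bound corrected to $9$, yields $9+9\cdot 39^3$ and is a complete proof of the claim up to that harmless adjustment of the constant.
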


\begin{proof}
 We define $\mu: R\cup \Rell\to \mathbb R^+$ by setting for each edge $e\in R\cup \Ri$ 
$$\mu(e):=\frac{|f^{-1}(e)|}{\Delta}.$$
 The proof that $\mu$ has values in $[0,1]$ and that it is a fractional matching of $\Gamma[\Rell \cup R]$ works exactly the same way than for the one of \Cref{clm: viPropmui}:
 observe that 
 for each $e\in R\cup \Rell$ and every endvertex $w$ of $e$ such that $w\in V(G)$, \Cref{clm: xiPropf} implies that $f^{-1}(e)\subseteq N_G(w)$. In particular, it gives $\mu(e)\leq 1$, and also implies that $\mu$ is a fractional matching of $\Gamma$.
 Moreover, the definition of $\mu$ together with \Cref{clm: xiPropf} and the fact that $X$ and $\Xell$ is disjoint imply that we have
 
\begin{align*}
|\Xell\cup X| & = \left(\sum_{e\in \Rell}\muell(e)\right)\Delta + \sum_{e\in R}|f^{-1}(e)| & \\
& = \left(\sum_{e\in \Rell}\muell(e)\right)\Delta + \left(\sum_{e\in R}\mu(e)\right)\Delta & \\
& =\left(\sum_{e\in \Rell\uplus R}\mu(e)\right)\Delta.
\end{align*}

We now prove the last inequality of the claim. For this we will need to use that planar graphs of diameter at most $3$ admits dominating sets of constant size. More precisely, improving a result of MacGillivray and Seyffarth \cite{MS96}, Dorfling, Goddard and Henning \cite[Theorem 3]{DGH06} proved that every planar graph of diameter at most $3$ has domination number at most $9$. In particular, using \Cref{clm: ipropGip} this implies that there exists a set $D\subseteq V(\Gell)$ such that $V(\Gell)= N_{\Gell}[D]$ and $|D|\leq 9$.

\begin{claim}
 \label{clm: small-degree-outside-D}
 Every vertex of $V(\Gell)\setminus D$ has degree at most $9(39+1) - 1$ in $\Gell$. 
\end{claim}

\begin{proof}
 If $u\in V(\Gell)\setminus D$ is such that $\deg_{\Gell}(u)\geq 9(39+1)$, then by pigeonhole principle, there exists $v\in D$ such that $u$ and $v$ have at least $39$ common neighbours, contradicting \Cref{rem: common-neighbours}.
\end{proof}

\begin{claim}
 \label{clm: bd-size}
 We have
 $$|V(\Gell)\setminus N_{\Gell}(W\cap D)|\leq 9+39^3.$$
\end{claim}

\begin{proof}
 We set $S:=V(\Gell)\setminus N_{\Gell}(W\cap D)$. As $D$ is a dominating set of $\Gell$, note that every vertex of $S$ which does not belong to $D$ must admit a neighbour in $D\setminus W$. Let $w\in D\setminus W$. Then by definition of $W$, either $w$ has eccentricity $2$ and degree at most $2\times 39^2$ in $\Gell$, or it has eccentricity $3$ in $\Gell$ and thus degree at most $39^3$ in $G_i$ by \Cref{clm: ecc3}. Hence, $w$ has degree at most $39^3$ in $\Gell$. As $D\setminus W$ dominates $S$, it implies that 
 $$|S|= |S\cap D| + |S\setminus D|  \leq 9 + 39^3.$$
\end{proof}

To conclude the proof of $(xiii)$, we just need to combine \Cref{clm: bd-size} with the observation that the following inclusion holds
$$V(\Gell)\setminus X \subseteq V(\Gell) \setminus N_{\Gell}(W\cap D)).$$ 
\end{proof}

\paragraph*{Proof of \Cref{thm: main1}}
The proof of \Cref{thm: main1} now follows from \Cref{clm: xPropR,clm: xiiPropmu}: we consider $\Gamma, R, \Rell, \mu$ constructed above. By \Cref{clm: xPropR}, $R\cup \Rell$ is a neighbouring set of edges of $\Gamma$. Moreover, by \Cref{clm: xiiPropmu}, $\mu$ is a fractional matching of $\Gamma[R\cup \Rell]$ and we have:

\begin{align*}
|V(G)| & = |X\cup \Xell| + |V(\Gell)\setminus X| \leq \left(\sum_{e\in \Rell\uplus R}\mu(\Gamma[R])\right)\Delta + 9 + 39^3.
\end{align*}
\qed

\section{Fractional matching number of planar neighbouring sets of planar graphs.}
\label{sec: fractional-matching}

In this section, we show that for every planar graph $G$ and every neighbouring subset of edges $R$, we have $\mu^{\ast}(G[R])\leq \tfrac92$ (see \Cref{thm: frac-intro}). In fact, we prove that this inequality holds if we assume that $G$ only excludes the complete graph $K_5$ as a minor, instead of assuming that $G$ is planar.
In particular, if we combine it with \Cref{thm: main1}, it immediately implies that \Cref{thm: main} holds.

\subsection{Excluded subgraphs.}
\label{sec: obstructions}
We prove in this subsection that if $R$ is a neighbouring subset of edges of a $K_5$-minor-free graph $G$, then $G[R]$ must exclude as subgraphs the graphs $H_1:=2K_3 + 2K_2, H_2:=C_5+2K_2, H_3:=C_7+K_3$ and $H_4:= C_9$.
We depicted $H_1, H_2$ and $H_3$ in \Cref{fig: obstructions}. 

\begin{figure}[htb]
  \centering   
  \includegraphics[scale=0.8]{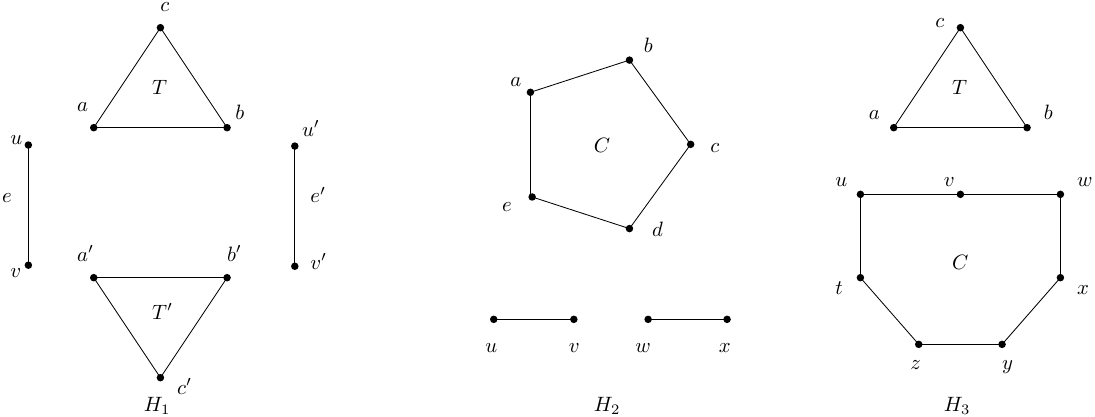}
  \caption{Three forbidden subgraphs of $G[R]$ when $G$ is $K_5$-minor-free and $R$ is a neighbouring set of edges.}
  \label{fig: obstructions}
\end{figure} 
 
\subsubsection{Excluding \texorpdfstring{$H_1, H_2$ or $H_3$}.} 

\begin{lemma}
 \label{lem: H1}
 If $G$ is a $K_5$-minor-free graph and $R$ is a neighbouring set of edges of $G$, then $G[R]$ does not contain $H_1$ as a subgraph.
\end{lemma}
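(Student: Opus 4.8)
The plan is to argue by contradiction: suppose $G$ is $K_5$-minor-free, $R \subseteq E(G)$ is neighbouring, and $G[R]$ contains $H_1 = 2K_3 + 2K_2$ as a subgraph. Name the two triangles $T_1$ with vertex set $\{a_1,b_1,c_1\}$ and $T_2$ with $\{a_2,b_2,c_2\}$, and the two disjoint edges $e = xy$ and $f = zw$; all six edges of the triangles and both $e,f$ lie in $R$. Since $R$ is neighbouring, every pair of edges in $R$ is at distance at most $1$ in $G$. The strategy is to extract from the neighbouring condition enough adjacencies (edges of $G$, not necessarily in $R$) between these seven vertex-disjoint pieces to build a $K_5$-minor, contradicting $K_5$-minor-freeness.

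First I would exploit the neighbouring condition between the two triangles: for each edge $g_1$ of $T_1$ and each edge $g_2$ of $T_2$, either $g_1,g_2$ share an endvertex (impossible here, the triangles are disjoint) or some edge $h \in E(G)$ is adjacent to both. Since a triangle is a single "blob," I would contract each $T_i$ to a single vertex $t_i$; the neighbouring condition forces a path of length at most $2$ in $G$ between $T_1$ and $T_2$, hence after contraction $t_1$ and $t_2$ are either adjacent or joined through one intermediate vertex. The key point is that a triangle, being $2$-connected with three vertices, is a very robust "branch set": to get a $K_5$-minor it suffices to find five pairwise-adjacent (in the minor sense) branch sets among $T_1, T_2$, and pieces built from $e$, $f$, and the connecting vertices. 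I would then set up branch sets $B_1 = V(T_1)$, $B_2 = V(T_2)$, $B_3 \supseteq \{x,y\}$, $B_4 \supseteq \{z,w\}$, and a fifth branch set $B_5$ absorbing the "connector" vertices witnessing the neighbouring relations among the six pairs of pieces, and check that all $\binom{5}{2}$ adjacencies are realized. Because each piece already spans at least two vertices (an edge) or three (a triangle), there is a lot of room to route the required connections; the neighbouring hypothesis applied to the $\binom{4}{2}=6$ pairs among $\{T_1,T_2,e,f\}$ supplies exactly the connecting edges/paths needed, and one shows these can be organized into disjoint branch sets.

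The main obstacle I anticipate is bookkeeping the disjointness of the branch sets: a single vertex of $G$ might simultaneously be the "connector" witnessing the neighbouring relation for several of the six pairs, and one must be careful that merging all connectors into one branch set $B_5$ still leaves $B_5$ connected and disjoint from $B_1,\dots,B_4$ — or, if a connector coincides with a vertex of a triangle or of $e$ or $f$, that this only helps. I would handle this by a short case analysis on how the witnessing paths between pieces overlap, in each case either directly exhibiting the five branch sets or, in the degenerate cases where pieces are forced to share vertices, producing an even smaller configuration (e.g.\ two triangles sharing a vertex already gives a $K_5$-minor together with two more connected pieces). A useful simplification is the observation that a triangle plus any vertex adjacent to at least two of its vertices already contains a $K_4$, so most "collisions" accelerate rather than obstruct the construction; the genuinely delicate case is when all the connecting paths have length exactly $2$ with distinct internal vertices, and there one checks that $B_5 := \{$the internal vertices$\}$ together with a spanning connection (available since all these vertices are close to the triangles, which are connected) yields the fifth branch set.
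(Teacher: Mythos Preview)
Your plan has a genuine gap rooted in a misreading of the neighbouring condition. If $e,e'\in R$ are vertex-disjoint, the definition gives an edge $f\in E(G)$ adjacent to both; since $f$ shares one endvertex with $e$ and one with $e'$, $f$ is a \emph{direct} edge between an endvertex of $e$ and an endvertex of $e'$. There is no intermediate ``connector'' vertex. Applied to your four pieces $T_1,T_2,e,f$, the neighbouring hypothesis only tells you that each pair is joined by an edge of $G$ whose endpoints already lie in the ten vertices of $H_1$. So contracting $T_1,T_2,e,f$ yields a $K_4$-minor and nothing more: your proposed $B_5$ is empty, and the ``delicate case'' you single out (length-$2$ connecting paths with distinct internal vertices) simply does not occur.

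The paper obtains the fifth branch set not from outside but by \emph{splitting one of the triangles}. The key preliminary observation is that each matching edge $V_1=\{u,v\}$ and $V_2=\{u',v'\}$, being neighbouring to all three edges of a triangle, must be adjacent in $G$ to at least two of that triangle's vertices. One then finds a vertex $b$ of one triangle, say $T$, that is adjacent to both $V_1$ and $V_2$ and (after a short case analysis on the $T$--$T'$ adjacencies) also adjacent to $T'$. Setting $V_3:=\{a,c\}$ (the other two vertices of $T$), $V_4:=\{b\}$, and $V_5:=V(T')$ then gives five pairwise adjacent branch sets. So the crucial idea you are missing is that the fifth branch set must be carved out of the ten vertices you already have, and the leverage to do so comes from exploiting the neighbouring condition \emph{between the matching edges and each individual edge of the triangles}, not just between the four pieces taken as blobs.
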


\begin{proof}
 We assume for a contradiction that $G[R]$ contains $H_1$ as a subgraph, and reuse all the variable names from \Cref{fig: obstructions}. 
 We set $V_1:=\sg{u,v}$ and $V_2:=\sg{u',v'}$. Note that since $R$ is a neighbouring set, for every edge $f=xy\in R$ and every triangle $D$ of $G[R]$ vertex-disjoint from $f$, there are at least two vertices of $D$ adjacent to $\sg{x,y}$. In particular, both $V_1$ and $V_2$ are adjacent to at least two vertices of each of the triangles $T,T'$.
 We distinguish two cases. 

\mbox{\bf{Case 1}:} A vertex of $V(T)$, say $c$, is not adjacent to any vertex of $T'$ in $G$. 
Again, we distinguish two subcases.
  \begin{itemize}
   \item[$\bullet$] Assume first that both $V_1$ and $V_2$ are connected to $c$ by an edge, and that $V_1$ is adjacent to $a$ while $V_2$ is adjacent to $b$ (see \Cref{fig: H1-Case1}, left).
   Since $c$ is not adjacent to any vertex of $T'$, $bc\in R$, and $R$ is a neighbouring set of edges, it holds that $b$ is adjacent to at least two vertices from $T'$. In particular, by the pigeonhole principle, $V_1$ and $b$ have a common neighbour on $T'$, and we assume without loss of generality that it is $a'$. Let $V_3=\{a,c\}$, $V_4=\{b',c'\}$ and $V_5=\{a',b\}$. Observe that $V_1$, $V_2$, $V_3$ and $V_4$ are all edges of the neighbouring set and thus are at distance~1 from each other, so contracting them results in pairwise adjacent vertices, and by construction $V_5$ is at distance~1 from every other $V_i$.
   Therefore by contracting all the $V_i$'s, we obtain a $K_5$ as minor, a contradiction. 
   
   \item[$\bullet$] We assume now that we are not in the previous subcase, and that by symmetry, we are also not in the situation where $V_1$ is adjacent to $c$ and $b$ and $V_2$ is adjacent to $c$ and $a$. In particular, since $V_1$ and $V_2$ both have to be adjacent to two vertices of $T$, both are adjacent to a common vertex of $T-c$; assume without loss of generality that $b$ is such a vertex (see \Cref{fig: H1-Case1}, right). 
   Since $c$ is not adjacent to any vertex of $T'$, $ac,bc\in R$, and $R$ is a neighbouring set, it holds that $b$ and $a$ are adjacent to $T'$. 
   Let $V_3:=\sg{a,c}, V_4:=\sg{b}$ and $V_5:=\sg{a',b',c'}$. The sets $V_1$, $V_2$, $V_3$ and $V_5$ are disjoint and at distance~1 from each other and $V_4$ is adjacent to every other $V_i$, so by contracting all the $V_i$'s, we obtain a $K_5$ as minor, a contradiction. 
   
   \begin{figure}[htb]
  \centering   
  \includegraphics[scale=1]{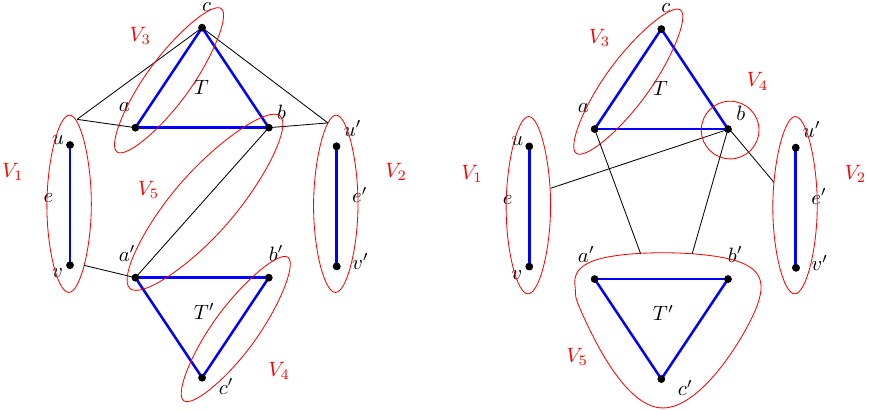}
  \caption{The two distinct situations in Case 1 of the proof of \Cref{lem: H1}. The edges of $R$ are represented in blue, while the black edges denote the edges from $G$ (which do not necessarily belong to $R$).} 
  \label{fig: H1-Case1}
\end{figure} 
  \end{itemize}

 \mbox{\bf{Case 2}:} Every vertex $x\in V(T)$ is adjacent to some vertex of $T'$ in $G$. As both $V_1$ and $V_2$ are adjacent to at least two vertices of $T$, we may assume without loss of generality that both $V_1$ and $V_2$ are adjacent to $b$. By hypothesis, $b$ is adjacent to at least one vertex of $T'$. We can thus find a $K_5$ minor the exact same way that we did for the second subcase of Case 1.  
\end{proof}

\begin{lemma}
 \label{lem: H2}
 If $G$ is a $K_5$-minor-free graph and $R$ is a neighbouring set of edges of $G$, then $G[R]$ does not contain $H_2$ as a subgraph.
\end{lemma}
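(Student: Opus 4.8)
The plan is to mirror the structure of the proof of \Cref{lem: H1} but adapted to the graph $H_2 = C_5 + 2K_2$. First I would assume for a contradiction that $G[R]$ contains $H_2$ as a subgraph, and fix notation: let the $5$-cycle be $C = c_1c_2c_3c_4c_5$ and let the two disjoint edges be $e_1 = uv$ and $e_2 = u'v'$, with $V_1 := \sg{u,v}$ and $V_2 := \sg{u',v'}$. As in the previous lemma, the key structural fact from $R$ being a neighbouring set is that for every edge $f = xy \in R$ and every triangle or, more generally, every subgraph we wish to contract that is vertex-disjoint from $f$, the pair $\sg{x,y}$ must be adjacent (in $G$) to enough vertices of that subgraph. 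Concretely, $V_1$ and $V_2$ are each at distance at most $1$ from each edge of $C$; since the edges of $C$ cover all five vertices, each of $V_1, V_2$ is adjacent (in $G$) to at least $\lceil 5/2 \rceil = 3$ vertices of $C$, and in fact to two endpoints of every edge of $C$ it does not itself meet — I would extract from this that $V_1$ and $V_2$ each hit at least two consecutive vertices of $C$ and more.

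The strategy to produce the forbidden $K_5$ minor is to exhibit five pairwise ``adjacent-or-contractible'' branch sets. The natural candidates are: the two edges $V_1, V_2$ (each contracts to a single vertex, and they are at distance $\le 1$ from each other since $R$ is neighbouring); a piece of the cycle, say an edge or a path of $C$, call it $V_3$; another disjoint piece of the cycle $V_4$; and a fifth set $V_5$ that acts as a connector. Since $C_5$ on $\sg{c_1,\dots,c_5}$ can be split into two vertex-disjoint edges $\sg{c_1,c_2}$ and $\sg{c_3,c_4}$ plus the leftover vertex $c_5$, a first attempt is $V_3 := \sg{c_1,c_2}$, $V_4 := \sg{c_3,c_4}$, $V_5 := \sg{c_5}$ (or $V_5$ a longer set containing $c_5$ together with a vertex through which it reaches the others). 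I would then check all $\binom{5}{2}$ adjacencies: $V_1V_2$ by the neighbouring property; $V_1V_3$, $V_1V_4$, $V_2V_3$, $V_2V_4$ because $V_1, V_2$ are each adjacent to two vertices of each edge of $C$ they miss (using that these $V_i$ are $R$-edges disjoint from the relevant cycle-edges); $V_3V_4$ because $c_2c_3 \in E(C) \subseteq E(G)$; and $V_1V_5$, $V_2V_5$, $V_3V_5$, $V_4V_5$ because $c_5$ is adjacent in $C$ to $c_1 \in V_3$ and $c_4 \in V_4$, and one argues $V_1, V_2$ reach $c_5$ or reach $V_5$ after enlarging it. Where a direct adjacency fails, I would enlarge a branch set by absorbing an extra vertex of $G$ (an intermediate vertex witnessing distance $1$), exactly as the proof of \Cref{lem: H1} does when it replaces a singleton by a larger connected set, and then re-verify disjointness.

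The step I expect to be the main obstacle is handling the case analysis when $V_1$ or $V_2$ is \emph{not} adjacent to some particular vertex of $C$ — the analogue of ``Case 1'' in the proof of \Cref{lem: H1}, where a vertex $c$ of the cycle has no neighbour among the relevant set. One has less freedom here than with a triangle: a $5$-cycle split into $\sg{c_1,c_2}, \sg{c_3,c_4}, \sg{c_5}$ is rigid, and if, say, $V_1$ avoids $c_5$ entirely, one must reroute through the edges incident to $c_5$ and use that the \emph{edges} $c_5c_1, c_4c_5$ of $C$ lie in $R$ (hence force their endpoints to see $V_1, V_2$). I would organize this by first establishing, from the neighbouring property applied to each of the five edges of $C$ as elements of $R$, a precise list of which vertices of $C$ each of $V_1$ and $V_2$ must be adjacent to, then splitting into the few genuinely distinct configurations up to the dihedral symmetry of $C_5$ and the swap $V_1 \leftrightarrow V_2$, and in each exhibiting the five branch sets explicitly. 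I expect two or three subcases, each resolved by a short concrete $K_5$-minor witness, with the bookkeeping of disjointness being the only delicate point; once the branch sets are named, verifying the contractions give $K_5$ is routine, contradicting $K_5$-minor-freeness of $G$.
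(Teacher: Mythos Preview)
Your overall strategy is correct and essentially matches the paper's: set $V_1,V_2$ to be the two isolated $R$-edges, split $C_5$ into two edges and a singleton, and contract. However, you are planning case analysis that the paper avoids entirely with one pigeonhole step you almost wrote down but did not exploit.

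You correctly observe that each of $V_1,V_2$ is adjacent in $G$ to at least three vertices of the $5$-cycle $C$. Since $3+3>5$, there is some vertex of $C$, call it $d$, adjacent to \emph{both} $V_1$ and $V_2$. The paper takes $V_5:=\sg{d}$ to be precisely this common neighbour, rather than an arbitrary leftover vertex of the cycle. The remaining four vertices of $C_5$ then form a $4$-vertex path, which splits into two adjacent edges $V_3,V_4$. Now all ten adjacencies are immediate: $V_1,V_2,V_3,V_4$ are pairwise adjacent because they are four pairwise-disjoint edges of the neighbouring set $R$; $V_5$ is adjacent to $V_3$ and $V_4$ via the cycle edges incident to $d$; and $V_5$ is adjacent to $V_1,V_2$ by the choice of $d$. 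No subcases, no enlarging of branch sets, no rerouting.

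So your proposal would work, but the ``main obstacle'' you anticipate (what to do when $V_1$ or $V_2$ misses the singleton) simply disappears once you let the pigeonhole argument \emph{choose} the singleton for you instead of fixing it in advance.
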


\begin{proof}
  We assume for a contradiction that $G[R]$ contains $H_2$ as a subgraph, and reuse all the variable names from \Cref{fig: obstructions}. 
  Let $V_1:=\sg{u,v}$ and $V_2:=\sg{w,x}$. As $R$ is a neighbouring set in $G$,  $V_1$ and $V_2$ must be adjacent and each of them must be adjacent to at least three vertices of the cycle $C$. 
  In particular, there is a vertex in $C$, without loss of generality $d$, adjacent to both $V_1$ and $V_2$.
  Let $V_3:=\sg{a,e}$ and $V_4:=\sg{b,c}$ and $V_5=\{d\}$ (see \Cref{fig: H2}). The sets $V_1$, $V_2$, $V_3$ and $V_4$ are disjoint and at distance~1 from each other and $V_5$ is adjacent to every $V_i$, so the graph obtained after contracting each $V_i$ and removing the other vertices from $G$ is a $K_5$, a contradiction. 
\end{proof}
  
     \begin{figure}[htb]
  \centering   
  \includegraphics[scale=1.25]{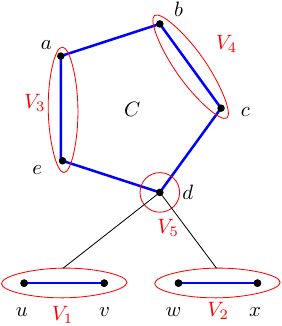}
  \caption{Proof of \Cref{lem: H2}. The edges of $R$ are represented in blue, while the black edges denote the edges from $G$.} 
  \label{fig: H2}
\end{figure}

\begin{lemma}
 \label{lem: H3}
 If $G$ is a $K_5$-minor-free graph and $R$ is a neighbouring set of edges of $G$, then $G[R]$ does not contain $H_3$ as a subgraph.
\end{lemma}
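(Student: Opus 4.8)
The plan is to assume, for contradiction, that $G[R]$ contains $H_3=C_7+K_3$ as a subgraph, say with the $7$-cycle $C=v_0v_1\cdots v_6v_0$ and the triangle $T$ on vertices $p,q,r$, and then to build a $K_5$-minor in $G$. Since $R$ is a neighbouring set, every edge of $T$ is at distance at most $1$ in $G$ from every edge of $C$; writing $A_x:=N_G(x)\cap V(C)$ for $x\in\{p,q,r\}$, this gives the two structural facts I would use throughout: (i) for each pair $\{x,y\}\subseteq\{p,q,r\}$ the set $A_x\cup A_y$ is a vertex cover of the cycle $C$ (hence $|A_x\cup A_y|\ge 4$), and (ii) for each edge $v_iv_{i+1}$ of $C$ at least two of $A_p,A_q,A_r$ meet $\{v_i,v_{i+1}\}$. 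The branch sets of the $K_5$-minor will always be built from $p,q,r$ (grouped into one, two or three branch sets) together with two or three arcs of $C$, and the only adjacencies I ever need are: $p,q,r$ pairwise adjacent ($T$ is a triangle); two arcs consecutive along $C$ are adjacent (so three arcs partitioning $V(C)$, or two complementary arcs, are pairwise adjacent); an arc on at least $2$ vertices meets any vertex cover of $C$; and various "an arc meets $A_x$" facts.

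I would organise the construction into cases. First, if some $v_i$ lies in $A_p\cap A_q\cap A_r$ and all three $A_x$ have at least two vertices, then $\{p\},\{q\},\{r\},\{v_i\},V(C)\setminus\{v_i\}$ are the branch sets of a $K_5$-minor. Next, if some $A_x$ has at most one vertex — say $|A_p|\le 1$, which by (i)/(ii) is then the unique such case — (i) forces the other two neighbourhoods to be large, and I would exhibit an explicit $K_5$-minor: if $A_p=\{v_0\}$ take $\{p,v_0\},\{q\},\{r\},\{v_1,v_2,v_3\},\{v_4,v_5,v_6\}$ (here $A_q$ and $A_r$ each meet $\{v_1,v_2\}$ and $\{v_5,v_6\}$, being vertex covers of the path $C-v_0$), and if $A_p=\emptyset$ take $\{p,q\},\{r\},\{v_0,v_1\},\{v_2,v_3\},\{v_4,v_5,v_6\}$ (here $A_q,A_r$ are vertex covers of $C$, so they meet each of the three arcs, all of size at least $2$).

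The remaining — and main — case is: all $|A_x|\ge 2$ and $A_p\cap A_q\cap A_r=\emptyset$. Here I would first handle the situation where some neighbourhood meets both others: if, say, $A_p\cap A_q\ne\emptyset$ and $A_p\cap A_r\ne\emptyset$, pick $z\in A_p\cap A_q$ and $z'\in A_p\cap A_r$ (necessarily distinct), let $X_4$ be the short arc of $C$ from a neighbour of $z$ to $z'$ (at most $3$ vertices), set $X_3=\{z\}$, split the remaining $\ge 2$ vertices into two consecutive arcs $X_1,X_2$ so the cyclic order along $C$ is $X_3,X_4,X_1,X_2$, and take the branch sets $\{p\},\ \{q\}\cup X_1,\ \{r\}\cup X_2,\ X_3,\ X_4$; one checks the ten adjacencies using $z\in A_q$, $z'\in A_r$, $p\sim q,r,z,z'$ and the cyclic arrangement. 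Otherwise each $A_x$ meets at most one of the other two neighbourhoods, which forces $A_p,A_q,A_r$ pairwise disjoint, or exactly one of the three pairwise intersections nonempty. In the pairwise-disjoint case $\sum|A_x|\le 7$ while each $|A_x|\ge 2$; a short counting argument, using that each $A_x\cup A_y$ is a size-$4$ vertex cover of $C$ and hence its complement is a maximum independent set of $C_7$, rules out $\sum|A_x|=6$ and reduces $\sum|A_x|=7$ to a couple of explicit configurations, each finished by choosing two complementary arcs of $C$ meeting all of $A_p,A_q,A_r$ together with $\{p\},\{q\},\{r\}$. The "exactly one pairwise intersection" case is similar: the neighbourhood disjoint from the other two lies in the complement of a vertex cover of $C$, so it is an independent set of size $2$ or $3$, leaving only a bounded list of configurations to check.

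The routine parts are the ten adjacency checks in each explicit branch-set family and the little vertex-cover / maximum-independent-set arithmetic of $C_7$. The real work, I expect, is the main case: ensuring the reductions ($A_p\cap A_q\cap A_r=\emptyset$; "some $A_x$ meets both others"; "pairwise disjoint with all sizes $\ge2$ is rigid") are exhaustive, and that the surviving rigid configurations genuinely admit the claimed two-complementary-arc partition. A cleaner alternative I would also try is to first extract honest chords of $C$ (any two non-adjacent edges of $C$, both lying in $R$, force a $G$-edge between their endpoints) and check whether $C$ together with these chords and $T$ already yields $K_5$ as a minor in a more uniform way, possibly bypassing much of the case analysis.
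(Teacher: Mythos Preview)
Your approach is genuinely different from the paper's, and your opening cases (triple intersection; some $|A_x|\le 1$) are clean and correct. But the main case has a real gap.

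In the sub-case ``some $A_x$ meets both others'', you propose branch sets $\{p\},\ \{q\}\cup X_1,\ \{r\}\cup X_2,\ X_3,\ X_4$ and say ``one checks the ten adjacencies''. The ten adjacencies between branch sets are fine, but you never argue that $\{q\}\cup X_1$ and $\{r\}\cup X_2$ are \emph{connected}. This requires $A_q\cap X_1\neq\emptyset$ and $A_r\cap X_2\neq\emptyset$, and neither follows from what you have set up. Concretely, take $A_p=\{v_0,v_1,v_3,v_5\}$, $A_q=\{v_0,v_2,v_4,v_6\}$, $A_r=\{v_1,v_3\}$ (all three vertex-cover conditions hold and the triple intersection is empty). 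Then $A_p\cap A_q=\{v_0\}$, so $z=v_0$ is forced; if you pick $z'=v_3\in A_p\cap A_r$ and take the short arc $X_4=\{v_1,v_2,v_3\}$, the remaining arc is $\{v_4,v_5,v_6\}$ and $A_r$ misses it entirely, so $\{r\}\cup X_2$ is disconnected for any split. The construction can be rescued here by choosing $z'=v_1$ instead and then splitting as $X_1=\{v_2\}$, $X_2=\{v_3,v_4,v_5,v_6\}$, but you give no rule for making these choices, and it is not clear a uniform rule exists without further case analysis. The ``pairwise disjoint'' and ``exactly one pairwise intersection'' sub-cases are only sketched; they are plausibly finishable, but together with the above the main case is not yet a proof.

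For comparison, the paper takes the dual viewpoint: it picks three pairwise non-adjacent edges $V_1,V_2,V_3$ of the $7$-cycle and asks which vertices of the triangle $T=\{a,b,c\}$ each $V_i$ sees (each sees at least two). If some vertex of $T$ is seen by all three $V_i$, one gets $K_5$ immediately from $V_1,V_2,V_3,\{b,c\},\{a\}$. Otherwise the pattern is forced up to symmetry, and a short analysis of how the seventh cycle vertex $v$ attaches to $T$ finishes the argument in a page. The key economy is that three contracted cycle edges already give three mutually adjacent branch sets that are automatically connected, so connectivity is never an issue---exactly the point where your construction stalls.
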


\begin{proof}
  We assume for a contradiction that $G[R]$ contains $H_3$ as a subgraph, and reuse all the variable names from \Cref{fig: obstructions}. 
  Let $V_1:=\sg{t,u}, V_2:=\sg{w,x}$ and $V_3:=\sg{y,z}$. As $R$ is a neighbouring set in $G$, each of the sets $V_1, V_2$ and $V_3$ must be adjacent to at least two vertices of the triangle $T$.

  \mbox{\bf{Case 1}:}
  Assume first that there exists some vertex of $T$, say $a$, which is adjacent to each $V_i$. Let $V_4:=\sg{b,c}$ and $V_5:=\sg{a}$. The minor obtained by contracting the $V_i$'s is a $K_5$, a contradiction.
  
  \mbox{\bf{Case 2}:}
  Assume now that no vertex of $T$ is simultaneously adjacent to 
  $V_1, V_2$ and $V_3$. Then we may assume without loss of generality that $N_T(V_1)=\sg{a,c}, N_T(V_2)=\sg{a,b}$ and $N_T(V_3)=\sg{c,b}$. 
  
  \begin{itemize}
      \item Assume first that $v$ is adjacent to $b$ or $c$ in $G$; say $b$ by symmetry (see \Cref{fig: H3}, left). Then, with $V_4:=\sg{a,c}$ and $V_5:=\sg{b,v}$, the minor obtained by contracting the $V_i$'s is a $K_5$, a contradiction.
      \item Assume now that $v$ is not adjacent to $b, c$, hence $|N_T(v)|\leq 1$.
      Since the edges $vw, cb$ belong to $R$, they are at distance $1$ from each other. In particular, since $w\in V_2$ is not adjacent to $c$, $w$ must be adjacent to $b$ in $G$.
      \begin{itemize}
          \item If $w$ is adjacent to $a$, we now set $V'_1:=\sg{u,v}, V'_2:=\sg{x,y}$ and $V'_3:=\sg{t,z}$ (see \Cref{fig: H3}, right). Then we are necessarily in one of the previous cases, with $w$ playing the role of $v$ and $V'_i$ playing the role of $V_i$ for each $i\in \sg{1,2,3}$.
          \item If $w$ is not adjacent to $a$, since both $vw$ and $wx$ are at distance $1$ from $ac$, both $v$ and $x$ are adjacent to $a$.
          We now set $V'_2:=\sg{v,w}$ and $V'_3:=\sg{x,y}$. Then $a$ is adjacent to $V_1$, $V_2'$ and $V'_3$, so we are in the same situation as in Case $1$, with $V'_i$ playing the role of $V_i$ for each $i\in \sg{2,3}$.
      \end{itemize}
  \end{itemize}
\end{proof} 
  
\begin{figure}[htb]
  \centering   
  \includegraphics[scale=1]{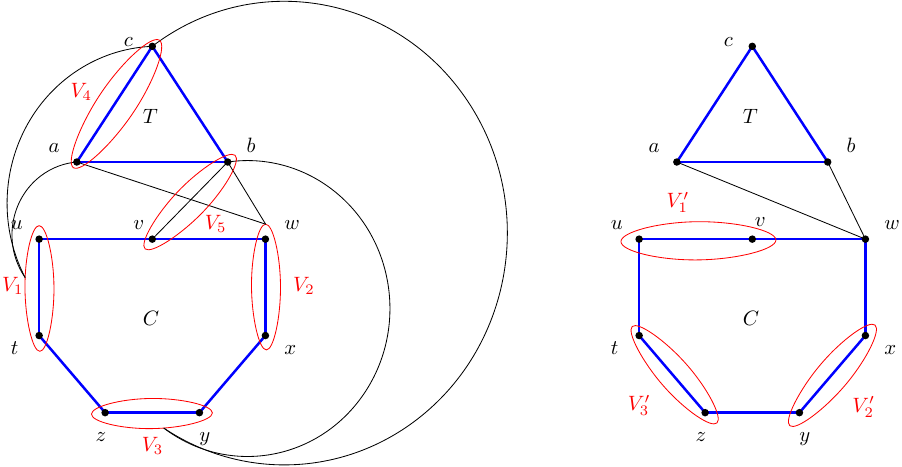}
  \caption{Proof of \Cref{lem: H3}. The edges of $R$ are represented in blue, while the black edges denote the edges from $G$.} 
  \label{fig: H3}
\end{figure}

\subsubsection{Excluding \texorpdfstring{$C_9$}.}
\begin{lemma}
 \label{lem: H4}
 If $G$ is a $K_5$-minor-free graph and $R$ is a neighbouring set of edges of $G$, then $G[R]$ does not contain $H_4:=C_9$ as a subgraph.
\end{lemma}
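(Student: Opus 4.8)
The plan is to argue by contradiction, so suppose $C_9\subseteq G[R]$ and let us produce a $K_5$ minor of $G$. Write the $9$-cycle as $v_0v_1\cdots v_8v_0$ with indices read modulo $9$, and set $e_i:=v_iv_{i+1}$. First I would unwind the neighbouring hypothesis: two cycle edges $e_i,e_j$ are automatically at distance at most $1$ in $G$ when their indices are at cyclic distance at most $2$, so for each $i$ the pairs $\{e_i,e_{i+3}\}$ and $\{e_i,e_{i+4}\}$ must be witnessed, i.e.\ $E(G)$ contains an edge joining an endvertex of $e_i$ to an endvertex of $e_{i+3}$ (resp.\ of $e_{i+4}$). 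Call these the \emph{forced chords}; note that a forced chord for a pair at cyclic distance $d\in\{3,4\}$ joins two cycle vertices at cyclic distance in $\{2,3,4\}$.

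The workhorse of the proof is the following choice of branch sets: $B_1:=\{v_0,v_1\}$, $B_2:=\{v_2,v_3\}$, $B_3:=\{v_4,v_5\}$, $B_4:=\{v_6,v_7\}$, $B_5:=\{v_8\}$. The cycle edges give the adjacencies $B_1B_2,B_2B_3,B_3B_4,B_4B_5,B_5B_1$, while the forced chords for $\{e_0,e_4\}$, $\{e_2,e_6\}$ and $\{e_0,e_6\}$ give $B_1B_3$, $B_2B_4$ and $B_1B_4$. So contracting the $B_i$ yields $K_5$ unless $v_8$ has no neighbour in $\{v_2,v_3\}$ or no neighbour in $\{v_4,v_5\}$. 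Since the hypotheses are invariant under rotating the cycle, the same holds with $v_8$ replaced by any $v_s$ (and $B_1,\dots,B_4$ rotated accordingly), so we may assume the following \emph{bad configuration}: for every $s$, $v_s$ has no neighbour in $\{v_{s+3},v_{s+4}\}$ \emph{or} no neighbour in $\{v_{s+5},v_{s+6}\}$.

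The remaining (and main) task is to rule out the bad configuration. Every candidate chord for a cyclic-distance-$4$ pair $\{e_i,e_{i+4}\}$ joins $v_a$ to $v_b$ with $b-a\in\{3,4\}$ (cyclically), and such a chord exhibits a neighbour of $v_a$ in its forward zone $\{v_{a+3},v_{a+4}\}$ and a neighbour of $v_b$ in its backward zone $\{v_{b-3},v_{b-4}\}=\{v_{b+5},v_{b+6}\}$; under the bad configuration $v_a$ then has no backward neighbour and $v_b$ no forward neighbour, so the forced chords of the nine cyclic-distance-$4$ pairs, oriented from $v_a$ to $v_b$, all have their tails in a fixed set $F\subseteq V(C_9)$ and their heads in a disjoint set $B$. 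This rigidity is very restrictive: for every cyclic-distance-$3$ pair $\{e_i,e_{i+3}\}$ that cannot be witnessed by a chord respecting this $F$/$B$ orientation, the only available forced chord is the short one $v_{i+1}v_{i+3}$, which creates a triangle $v_{i+1}v_{i+2}v_{i+3}$ in $G$. One checks that this happens for enough values of $i$, and that the finitely many resulting patterns of short chords each still leave room for a $K_5$ minor, built directly from a dense triple $\{v_i,v_{i+1},v_{i+2}\}$ (used as three branch sets, or as one), the cycle edges, and the cyclic-distance-$4$ chords. I expect this final case analysis — verifying that no bad configuration is actually $K_5$-minor-free — to be the crux; the preceding reductions are mechanical once the forced-chord bookkeeping is set up.
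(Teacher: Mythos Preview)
Your first reduction is correct and coincides exactly with the paper's Claim~1: with branch sets $\{v_0,v_1\},\{v_2,v_3\},\{v_4,v_5\},\{v_6,v_7\},\{v_8\}$, the forced chords for $\{e_0,e_4\}$, $\{e_2,e_6\}$, $\{e_0,e_6\}$ supply all adjacencies except $B_5B_2$ and $B_5B_3$, and rotating yields your ``bad configuration'' (equivalently: no vertex sees both its forward zone $\{v_{s+3},v_{s+4}\}$ and its backward zone $\{v_{s-3},v_{s-4}\}$).

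The genuine gap is everything after that. You explicitly flag the remaining case analysis as the crux and do not carry it out; this is exactly where the work lies. The paper handles it not via your $F/B$ dichotomy but via three further forbidden chord-patterns (their Claims~2--4): no pair $\{fb,ei\}$, no pair $\{be,fi\}$, and no pair $\{ae,cg\}$ (in their labelling), each proved by a tailored minor or by reducing to an earlier claim. These combine with the ``antipode'' observation (each cycle edge must be adjacent in $G$ to the unique vertex at cycle distance $4$ from both its endpoints) to force a final contradiction. Your $F/B$ framework is a legitimate alternative organizing principle, but there is a slip in your bookkeeping: for a cyclic-distance-$4$ pair $\{e_i,e_{i+4}\}$ the witness chord can also be $v_iv_{i+5}$, which has $b-a=5$ (cyclic vertex distance $4$, but with the $F$ endpoint at $v_{i+5}$ rather than $v_i$). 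This does not break the disjointness of $F$ and $B$, but it means the chords for distance-$4$ pairs are not all oriented the same way along the cycle, and the ``rigidity'' you invoke is weaker than stated; the subsequent enumeration of short-chord patterns is correspondingly messier. Until that enumeration is actually performed and each surviving pattern is shown to yield a $K_5$ minor, the argument is incomplete.
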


\begin{proof}
 We assume for a contradiction that $G[R]$ contains a cycle $C$ of length $9$, and denote its vertices with $a,b,c,d,e,f,g,h,i$, taken in a cyclic order.

\begin{claim}
\label{clm: C9_1}
    The vertex $a$ cannot be adjacent in $G$ to both $\sg{d,e}$ and $\sg{f,g}$.
\end{claim}
\begin{proof}
    The proof follows from the top left side of \Cref{fig: C9_12}: if we assume for a contradiction that $a$ is adjacent in $G$ to both an element of $\sg{d,e}$ and of $\sg{f,g}$, then after contracting the edges $bc, de, fg$ and $hi$ (that are pairwise adjacent in $G$ as they all belong to $R$), we obtain a $K_5$ minor in $G$. 
\end{proof}

\begin{figure}[htb]
    \centering   
    \includegraphics[scale=0.7]{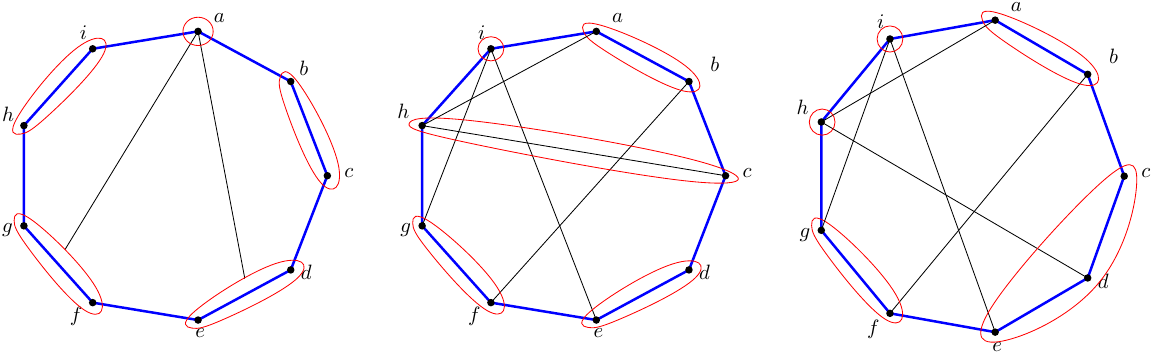}
    \caption{Proof of \Cref{clm: C9_1,clm: C9_2}. The edges of $R$ are represented in blue, while the black edges denote the edges from $G$.}   
    \label{fig: C9_12}
\end{figure}

\begin{claim}
   \label{clm: C9_2}
  We cannot have both $fb\in E(G)$ and $ei\in E(G)$.
\end{claim}

\begin{proof}
    We assume for a contradiction that both $fb$ and $ei$ are edges of $G$. Therefore, by \Cref{clm: C9_1} (with respectively $f$ and $e$ playing the role of $a$) , we get that $\{fi, fa, ea, eb\}\cap E(G)=\emptyset$. On the other hand, $R$ is a neighbouring set, and thus $\{a,b\}$ must be adjacent to $\{d,e\}$ and $\{a,i\}$ must be adjacent to $\{g,f\}$. Therefore, $da$ or $db$ is an edge in $G$ and similarly $ga$ or $gi$ is an edge of $G$. Moreover, $da$ and $ga$ cannot both exist by \cref{clm: C9_1}. The other case being symmetric, we may assume that $ga$ is not an edge, and thus that $gi$ is an edge in $G$.  Note that $\sg{a,b}$ must be adjacent to $\sg{h,g}$, and moreover, as we assumed that $ga$ is not an edge, and as by \Cref{clm: C9_1}, $b$ cannot be adjacent to $g$ nor $h$, $ah$ must be an edge in $G$. 
    Again, since $\{i,h\}$ must be adjacent to $\{c,d\}$, and $ie$ is an edge in $G$, by \Cref{clm: C9_1}, $i$ cannot be adjacent to $\sg{c,d}$, hence $hc$ or $hd$ must be an edge in $G$.
    If $hc$ is an edge, then after contracting edges $ab, de, fg$ and $hc$, we obtain a $K_5$ minor in $G$ (see \Cref{fig: C9_12}, top right).
    Thus $hd$ is an edge and then after contracting $\{a,b\}$, $\{c,d,e\}$, $\{g,f\}$ we obtain a $K_5$ minor in $G$ (see \Cref{fig: C9_12}, bottom).
\end{proof}

\begin{claim} \label{clm: C9_3}
    We cannot have both $be\in E(G)$ and $fi\in E(G)$.
\end{claim}
\begin{proof}
    Assume for a contradiction that both $be$ and $fi$ are edges of $G$.
    In particular, by two applications of \Cref{clm: C9_1}, with $i$ and $b$ playing the role of $a$ (up to performing a rotation of $C$), we obtain that $i$ cannot be adjacent to $\sg{c,d}$, and that $b$ cannot be adjacent to $\sg{g,h}$. On the other hand, since $R$ is neighbouring, there has to be an edge between $\sg{b,c}$ and $\sg{g,h}$, and thus it has to be either $cg$ or $ch$. With a symmetric argument, in order to connect the sets $\sg{h,i}$ and $\sg{c,d}$, either $hc$ or $hd$ is an edge in $G$.

    Suppose first that $ch\notin E(G)$.
    By above remarks, we then must have $hd, cg\in E(G)$. 
    Moreover, there has to be an edge connecting $ab$ and $fg$ and because of \cref{clm: C9_1}, this cannot be $ga$ nor $gb$ (with $g$ taking the role of $a$)  nor $fb$ (with $f$ taking the role of $a$), so it has to be $fa$. However, note that then, the existence of $fa$ and $cg$ implies that, up to performing a rotation of $C$, we are in the configuration forbidden by \Cref{clm: C9_2}, a contradiction.

    Suppose now that $ch\in E(G)$. Then the existence of respectively $ch$, $be$ and $fi$ implies by \cref{clm: C9_1} 
    that $c$ in not adjacent to $f$ nor to $g$, $b$ is not adjacent to $g$, and $f$ is not adjacent to $b$. Therefore, there is no edge between the sets $\sg{b,c}$ and $\sg{f,g}$, contradicting the fact that $bc,fg$ belong to a neighbouring set.
\end{proof}    
  
Given an edge $e$ of $G[R]$, we call its \emph{antipode} the unique vertex which is at distance $4$ on the $C$ from its two endvertices. As $R$ is neighbouring, every edge of $C$ must be adjacent to the two edges of $C$ incident to its antipode. In particular, up to rotating $C$, \Cref{clm: C9_1,clm: C9_2,clm: C9_3} imply that every edge must be adjacent in $G$ to its antipode.
  
\begin{claim} \label{clm: C9_4}
    The edges $ae$ and $cg$ cannot both exist in $G$.
\end{claim}
\begin{proof}
  The proof follows from of \Cref{fig: C9_4}: $d$ is the antipode of $ih$, and thus must be connected to $\sg{i,h}$. 
  If we assume for a contradiction that $ae,cg\in E(G)$, then the couple of edges $di,cg$, or $dh, ae$ (depending of whether $d$ is adjacent to $i$ or $h$), implies that, up to a rotation of $C$, we are in the configuration forbidden by \cref{clm: C9_2}, a contradiction.
\end{proof}

\begin{figure}[htb]
    \centering   
    \includegraphics[scale=0.7]{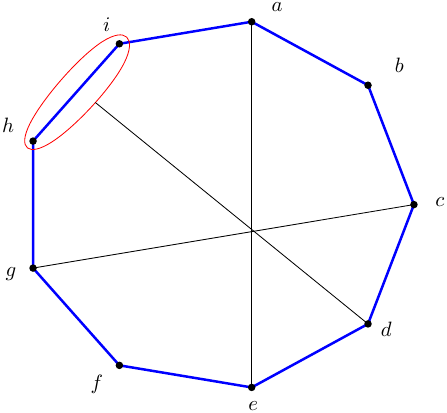}
    \caption{Proof of \Cref{clm: C9_4}. The edges of $R$ are represented in blue, while the black edges denote edges from $G$.} 
    \label{fig: C9_4}
\end{figure}

We now conclude the proof of \Cref{lem: H4}. Since $a$ and $c$ are respectively the antipodes of the edges $ef$ and $gh$, then $a$ must be adjacent to one of the vertices in $\sg{e,f}$ and $c$ must be adjacent to one of the vertices in $\sg{g,h}$.
Observe that we must then have $af, cg\in E(G)$, as all other possible pair of edges connecting $ef$ and $gh$ to their antipodes imply that, up to a rotation of $C$, we are in a configuration forbidden by \Cref{clm: C9_4}. However, note that the existence of these two edges implies that, up to a rotation of $C$, we are in a configuration forbidden by \cref{clm: C9_2}, a contradiction.
\end{proof}

We note that for each $i\in \sg{1,2,3,4}$, $H_i$ is edge-minimal, in the sense that for every proper subgraph $S$ of $H_i$, one can build a planar (and thus $K_5$-minor-free) graph $G$ that contains a neighbouring set $R$ such that $S$ is a subgraph of $G[R]$.


\subsection{Fractional matching}
\label{sec: proof-frac}

The aim of this section is to show that a neighbouring set of edges in a $K_5$-minor-free graph has fractional matching at most $\frac{9}{2}$ (see \Cref{thm: frac-intro}).
To do so, we first introduce some preliminary definitions and results about matchings.

Let $H$ be a graph with no isolated vertex, and let $M=\sg{e_1,\ldots, e_k}$ be a maximum matching of $H$, with $k:=\mu(H)$.
For each $i\in \sg{1,\ldots, k}$, we write $e_i=u_iv_i$, with $u_i, v_i\in V(G)$ and we denote by $S_M:=\sg{u_i, v_i : 1\leq i \leq k}$ the set of vertices saturated by $M$. 
 
A \emph{private triangle} of an edge $e_i\in M$ is a triangle of $H$ of the form $u_iv_iw$, for some vertex $w\in V(H)\setminus S_M$. 
Since $M$ is a maximum matching, every edge $e_i$ has at most one private triangle.

A \emph{private edge} of a vertex $u$ of $S_M$ (belonging to $e_i$ for some $1\leq i \leq k$) is an edge $uw\in E(H)$, with $w\in V(H)\setminus S_M$ which does not belong to a private triangle of $e_i$. Since $M$ is a maximum matching, for each $i\in \sg{1, \ldots, k}$, at most one endvertex of $e_i$ can have private edges. For the same reason, if $e_i$ has a private triangle, then none of its endvertices can have a private edge. In the remainder of the proof, we will follow the convention that whenever one of the two endvertices of the edge $e_i$ has a private edge, then it is always $u_i$.

Observe that some vertices of $V(H)\setminus S_M$ can belong to several private edges or private triangles. Moreover, every vertex of $V(H)\setminus S_M$ belongs to at least one private edge or private triangle (otherwise, since $H$ has no isolated vertex, we would obtain a contradiction with the maximality of $M$).

For a given maximum matching $M$, an \emph{$M$-partition} of $V(H)$ is a tuple $(U,V,W,X)$ where each of $U$, $V$, $W$ and $X$ are sets of vertices of $H$ with the following properties:

\begin{itemize}
    \item $(U,V)$ is a partition of $S_M$ with $U=\sg{u_i : 1\leq i \leq k}$ and $V=\sg{v_i : 1\leq i \leq k}$;
    \item $W$ is the set of vertices from $V(H)\setminus S_M$ that belong to some private triangle;
    \item $X$ is the set of vertices from $V(H)\setminus (S_M\cup W)$ that belong to a private edge  but not to a private triangle.
\end{itemize}   

In the following claim, we gather some properties of an $M$-partition, whose proofs immediately follow from the above definitions, and from the fact that the matching $M$ is given and maximum and $H$ has no isolated vertices.

\begin{claim}\label{proprietesM-partition}
  For every graph $H$ with no isolated vertex, for every maximum matching $M$ of $H$ and for every $M$-partition $(U,V,W,X)$ of $V(H)$:
    \begin{enumerate}
        \item $(W,X)$ is a partition of $V(H)\setminus S_M$, and thus $(U,V,W,X)$ is a partition of $V(H)$.
        \item $W\cup X=V(H)\setminus S_M$ is an independent set.
        \item  For every $v_i\in V$, $v_i$ is adjacent to some vertex $w$ in $V(H)\setminus S_M$ if only if $u_iv_iw$ is a private triangle. In particular, there is no edge between $X$ and $V$, and thus $N_H(X)\subseteq U$.
        \item If $(U',V',W',X')$ is another $M$-partition of $V(H)$, then $W'=W$ and $X'=X$, so the partitions can only differ on the partition $(U,V)$ and $(U',V')$ of $S_M$, \emph{i.e.}, on the choices of the parts of some $u_i,v_i$ that have symmetric roles (that is, if there is no private edge adjacent to $u_iv_i$). 
    \end{enumerate}
\end{claim}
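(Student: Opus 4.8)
The plan is to check the four items in turn, using throughout the two standing hypotheses that $M$ is a \emph{maximum} matching of $H$ and that $H$ has no isolated vertex. The single fact powering everything is item~2, so I would establish it first: if two vertices $z,z'\in V(H)\setminus S_M$ were adjacent in $H$, then $M\cup\sg{zz'}$ would be a strictly larger matching, contradicting the maximality of $M$; hence $V(H)\setminus S_M$ (which by the definitions of $W$ and $X$ equals $W\cup X$) is independent.

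For item~1, I would first note that $W\cap X=\emptyset$ and $(W\cup X)\cap S_M=\emptyset$ hold by the very definitions of $W$ and $X$, and that $(U,V)$ partitions $S_M$ since distinct edges of a matching share no endvertex. It then remains to see that $W\cup X=V(H)\setminus S_M$, i.e. that every $z\in V(H)\setminus S_M$ lies in a private edge or a private triangle: since $H$ has no isolated vertex, $z$ has a neighbour $y$, and by item~2 we have $y\in S_M$, say $y$ is an endvertex of $e_i$; then the edge $zy$ is, by definition, either part of a private triangle of $e_i$ (putting $z\in W$) or a private edge, and in the latter case, if $z$ lies in no private triangle at all, then $z\in X$. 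Therefore $(U,V,W,X)$ partitions $V(H)$.

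For item~3, I would invoke the labelling convention that whenever an endvertex of $e_i$ is incident to a private edge, that endvertex is called $u_i$; consequently $v_i$ is never incident to a private edge. So if $v_iw\in E(H)$ with $w\in V(H)\setminus S_M$, then $v_iw$ cannot be a private edge, hence by definition it must lie inside a private triangle of $e_i$, which is necessarily $u_iv_iw$; in particular $u_iw\in E(H)$ and $w\in W$. This is the ``only if'' direction of item~3 (the ``if'' direction being immediate), and applying it with some $x\in X$ in the role of $w$ shows that $X$ has no neighbour in $V$; combined with item~2, which forbids neighbours of $X$ inside $W\cup X$, this yields $N_H(X)\subseteq U$.

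For item~4, the crux is that $W$ and $X$ are determined by $M$ and $H$ alone, independently of the labelling $(U,V)$: a ``private triangle of $e_i$'' refers only to the \emph{unordered} pair of endvertices of $e_i$, and likewise the set of all private edges does not see the labelling; hence any two $M$-partitions satisfy $W'=W$ and $X'=X$ and can differ only in how $S_M$ is split into $U$ and $V$. By the convention recalled above, the part of $u_i$ is forced as soon as $e_i$ has an endvertex incident to a private edge (there is at most one such endvertex, and none at all if $e_i$ has a private triangle), so the only genuine freedom is on those $e_i$ neither of whose endvertices is incident to a private edge, whose two endvertices play symmetric roles. I do not expect a real obstacle: each item is a short unwinding of the definitions. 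The one point that needs care is keeping track, in items~3 and~4, of the asymmetry created by the $u_i/v_i$ convention, and confirming that ``private triangle'' and ``private edge'' are themselves labelling-free notions — which is exactly what makes $W$ and $X$ canonical and hence what makes item~4 work.
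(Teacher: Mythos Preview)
Your proposal is correct and matches the paper's approach: the paper does not give a detailed proof of this claim at all, merely stating that the four items ``immediately follow from the above definitions, and from the fact that the matching $M$ is given and maximum and $H$ has no isolated vertices,'' which is precisely what you unwind. Your handling of the $u_i/v_i$ convention in items~3 and~4 is accurate and is the only point where any care is required.
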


For the results of this section, we are interested in the case when $H$ is edge-induced by a neighbouring set of edges in a $K_5$-minor-free graph, and in particular when $\mu(H)=4$. In this case, we get the following additional property of the $M$-partition. 

\begin{claim}\label{claimSizeWX}
  Let $G$ be a $K_5$-minor-free graph, $R$ a neighbouring set of edges of $G$, and let $H=G[R]$.
  If $\mu(H)=4$, then for every $M$-partition $(U,V,W,X)$ of $H$, $|W|\leq 1$. Furthermore, if $|V(H)|\geq 10$, then $|W \cup X|\geq 2$, and thus $|X|\geq 1$.
\end{claim}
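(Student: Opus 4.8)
The plan is to prove the two assertions separately. The bound $|W|\le 1$ will come from the forbidden-subgraph result \Cref{lem: H1}, and the second assertion is then a short counting argument based on the $M$-partition.

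First I would establish $|W|\le 1$ by contradiction. Suppose $w,w'\in W$ are distinct. By definition each lies in a private triangle, say $w$ in $u_iv_iw$ and $w'$ in $u_jv_jw'$; since a maximum-matching edge admits at most one private triangle, we must have $i\ne j$. Writing $\{k,\ell\}=\{1,2,3,4\}\setminus\{i,j\}$, the two triangles $u_iv_iw$ and $u_jv_jw'$ together with the matching edges $e_k=u_kv_k$ and $e_\ell=u_\ell v_\ell$ are pairwise vertex-disjoint: the $u$'s and $v$'s are eight distinct vertices (endpoints of the matching $M$, and $u_iv_i\in E(H)$ has no loop), and $w,w'\notin S_M$. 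Hence $H=G[R]$ contains $2K_3+2K_2=H_1$ as a subgraph, contradicting \Cref{lem: H1}. Note that this part uses neither $|V(H)|\ge 10$ nor anything beyond $\mu(H)=4$.

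For the second assertion I would argue by counting. By \Cref{proprietesM-partition}(1), $(U,V,W,X)$ is a partition of $V(H)$; since $M$ is a matching of size $\mu(H)=4$ it saturates exactly $8$ vertices, so $|U|=|V|=4$. Therefore $|V(H)| = 8 + |W| + |X| = 8 + |W\cup X|$, using that $W$ and $X$ are disjoint. If $|V(H)|\ge 10$ this forces $|W\cup X|\ge 2$, and then $|X|\ge 2-|W|\ge 1$ by the first part, as required.

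The only nonroutine point is the first step: recognising that two private triangles attached to distinct matching edges, plus the two leftover matching edges, form precisely the excluded configuration $H_1$. Everything else is straightforward manipulation of the $M$-partition, whose relevant properties are already recorded in \Cref{proprietesM-partition}.
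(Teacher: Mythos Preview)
Your proof is correct and follows essentially the same approach as the paper: two distinct private triangles together with the two remaining matching edges form a copy of $H_1$, contradicting \Cref{lem: H1}, and the second assertion is a direct count using $|S_M|=8$. Your write-up is in fact slightly more explicit than the paper's on vertex-disjointness and on the counting step.
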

\begin{proof}
  If $W$ contains two distinct vertices $w$ and $w'$, then there exist $i,j$ such that $u_iv_iw$ and $u_jv_jw'$ form two private triangles. As each edge has at most one private triangle, $i\neq j$. Since $|M|=4$, these two triangles together with the two remaining edges of the matching $M$ form a copy of $H_1$, contradicting \cref{lem: H1}. The ``furthermore'' part then immediately follows from item $1$ of \Cref{proprietesM-partition}.
\end{proof}

We say that a graph $H$ is \emph{tame} if $H$ has either at most $9$ vertices, or if it is the disjoint union of a cycle $C$ of length $7$ and a star of center $v$, with possibly additional edges having both endvertices in $V(C)\cup \sg{v}$.

Tame graphs can be easily seen to have a fractional vertex cover with value $\tfrac92$.
\begin{lemma}
    \label{clm:tameIsEasy}
    If $H$ is tame, then $\tau^*(H)\leq \tfrac92$.
\end{lemma}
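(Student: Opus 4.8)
The plan is to exhibit, in each of the two cases of the definition of tameness, an explicit fractional vertex-cover of $H$ with value at most $\tfrac92$; since $\tau^*(H)$ is by definition the infimum of the values of all fractional vertex-covers of $H$, this immediately gives the lemma (no appeal to LP duality is needed).

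If $|V(H)|\le 9$, I would simply take $h(x):=\tfrac12$ for every $x\in V(H)$. For any edge $xy\in E(H)$ we have $h(x)+h(y)=1\ge 1$, so $h$ is a fractional vertex-cover, of value $\tfrac12|V(H)|\le\tfrac92$.

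Now suppose $H$ is the disjoint union of a $7$-cycle $C$ and a star with center $v$ and leaf-set $L$, possibly together with extra edges all having both endpoints in $V(C)\cup\sg{v}$. Since $C$ and the star are vertex-disjoint we have $v\notin V(C)$, and the only edge incident to a leaf $\ell\in L$ is the star edge $v\ell$. I would set $h(v):=1$, $h(\ell):=0$ for every $\ell\in L$, and $h(x):=\tfrac12$ for every $x\in V(C)$. Every edge of $H$ is then of one of the following kinds: a star edge $v\ell$ with $\ell\in L$ (covered, since $h(v)+h(\ell)=1$); an extra edge $vx$ with $x\in V(C)$ (covered, since $h(v)+h(x)=\tfrac32\ge 1$); or an edge $xy$ with $x,y\in V(C)$ — a cycle edge or an extra chord of $C$ — (covered, since $h(x)+h(y)=1$). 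Hence $h$ is a fractional vertex-cover, of value $h(v)+\sum_{x\in V(C)}h(x)=1+7\cdot\tfrac12=\tfrac92$.

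There is essentially no obstacle here: the two assignments are forced once one notices that the leaves of the star have degree $1$ in $H$ (so they can be covered for free), while the only part of $H$ that can be dense, namely $V(C)$, has just $7$ vertices and hence is coverable at total cost $\tfrac72$. The one point requiring care is to enumerate all edge types in the second case, in particular not to overlook the possible extra edges joining $v$ to $V(C)$, nor the possible chords of $C$.
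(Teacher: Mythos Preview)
Your proof is correct and follows exactly the same approach as the paper: assign weight $\tfrac12$ to every vertex when $|V(H)|\le 9$, and in the $C_7$+star case assign $1$ to the star center, $\tfrac12$ to the cycle vertices, and $0$ to the leaves. Your write-up is in fact slightly more explicit in verifying the covering condition for each edge type.
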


\begin{proof}
    Observe that if $H$ has at most $9$ vertices, then it suffices to set $h(v):=\frac12$ for every vertex of $H$ to get a fractional vertex cover with value at most $\tfrac92$.
        
    Suppose now that $H$ is the disjoint union of a cycle $C$ of length $7$ and a star of center $v$, with possible additional edges between vertices of $V(C)\cup \sg{v}$. 
    Then all the edges of $H$ are either incident with $v$ or between two vertices of $C$.
    In this case, setting $h(v):=1$, $h(u):=\tfrac{1}{2}$ for every $u \in V(C)$, and $h(x):=0$ for the remaining vertices, produces a fractional vertex cover with value $\tfrac92$.
\end{proof}

Given a matching $M$ of a graph $H$, a path $P=x_1x_2 \ldots x_{k}$ is \emph{alternating} if $x_1 \in V(H) \setminus S_M$,
for every pair of subsequent edges $x_{i-1}x_i$, $x_ix_{i+1}$, one of them is in $M$ and the other one is not in $M$, and moreover, if $P$ contains a vertex of $S_M$, then $P$ also contains its neighbour in $M$. 

Furthermore, if $x_k \in V(H) \setminus S_M$, then we say that $P$ is \emph{augmenting}. Note that if $P$ is augmenting, then $k$ must be even, and both the first and last edges of $P$ are not in $M$.
The vertex $x_1$ is called the \emph{root} of $P$. Given an alternating path $P$, the operation of \emph{reversing $M$ through $P$} consists in constructing a new set of edges $M'$ of $V(H)$ by swapping the edges in $P$, that is: $M' := (M \setminus E(P)) \cup (E(P) \setminus M)$. Note that $P$ is still an alternating path for the constructed set $M'$, and note that moreover, if $P$ is augmenting, then $M'$ is also a matching in $H$. Moreover, note that $|M'|>|M|$, which immediately implies the following (folklore) claim.

\begin{claim}
    \label{clm:noAugmenting}
    If $M$ is a maximum matching of $H$, then there is no augmenting path.
\end{claim}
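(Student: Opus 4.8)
The plan is to argue by contradiction, exactly along the lines foreshadowed in the paragraph preceding the claim. Suppose $M$ is a maximum matching of $H$ but there exists an augmenting path $P = x_1 x_2 \cdots x_k$. The first step is to record the structure of $P$ forced by the definitions: since $x_1 \in V(H) \setminus S_M$ is unsaturated, the first edge $x_1 x_2$ lies outside $M$; the alternation condition then forces the edges of $P$ to be, in order, outside $M$, inside $M$, outside $M$, and so on; and since $x_k \in V(H)\setminus S_M$ is also unsaturated, the last edge $x_{k-1}x_k$ lies outside $M$ as well. Consequently $k$ is even and $P$ carries exactly one more non-matching edge than matching edge, that is, $|E(P)\setminus M| = |E(P)\cap M| + 1$.

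Next I would form $M' := (M \setminus E(P)) \cup (E(P)\setminus M)$, the matching obtained by reversing $M$ through $P$, and verify that $M'$ is indeed a matching. The only vertices whose incident chosen edges change are the vertices of $P$: each internal vertex $x_i$ with $1 < i < k$ was incident to exactly one edge of $P$ in $M$ and is now incident to exactly one edge of $P$ outside $M$, so it still has $M'$-degree $1$; the endpoints $x_1$ and $x_k$ were unsaturated and become saturated by $x_1x_2$ and $x_{k-1}x_k$ respectively. For a vertex $y\notin V(P)$, any edge $yz\in M$ incident to $y$ must have $z\notin V(P)$ as well, by the clause in the definition of alternating path stating that if $P$ meets $S_M$ then $P$ also contains the corresponding $M$-partner; hence $yz\notin E(P)$ and the incidences of $y$ are untouched. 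It follows that $M'$ is a matching.

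Finally I would count: $|M'| = |M| - |E(P)\cap M| + |E(P)\setminus M| = |M| + 1$ by the parity observation above, which contradicts the maximality of $M$. Therefore no augmenting path exists, proving the claim. I expect the only point requiring a moment's care — the ``main obstacle'' for this folklore fact — to be the verification that $M'$ is a matching, and in particular the use of the extra condition built into the definition of an alternating path to rule out an $M$-edge with exactly one endpoint on $P$; the remainder is bookkeeping on edge counts and parity.
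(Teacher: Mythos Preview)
Your argument is correct and is exactly the approach the paper indicates: the text immediately preceding the claim already observes that reversing $M$ through an augmenting path yields a matching $M'$ with $|M'|>|M|$, and labels the claim as folklore. Your write-up simply fleshes out that one-line justification, including the use of the extra clause in the paper's definition of alternating path to ensure no $M$-edge has exactly one endpoint on $P$.
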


Given a matching $M$ of a graph $H$, we say that two alternating paths $P,P'$ are \emph{$S_M$-disjoint} if $V(P)\cap V(P')\cap S_M = \emptyset$. Note that if $M$ is a maximum matching, then, by \cref{clm:noAugmenting}, every alternating path has at most one vertex in $V(H)\setminus S_M$, hence in this situation, every two $S_M$-disjoint paths intersect in at most one vertex (which is an extremity of both paths).

\begin{lemma}\label{cl:alt path}
  Let $G$ be a $K_5$-minor-free graph and $R$ a neighbouring set of edges of $G$. 
  Let $H=G[R]$ and assume that $\mu(H)=4$, and let $M$ be a maximum matching in $H$.

  Let $P,P'$ be two $S_M$-disjoint alternating paths, and let $x,v$ (resp. $x',v'$) be the extremities of $P$ (resp. $P'$) such that $x$ (resp. $x'$) is the root of $P$ (resp. $P'$).
  Then either $H$ is tame, or $v$ and $v'$ are not adjacent in $H$.
\end{lemma}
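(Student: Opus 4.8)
The idea is to argue by contradiction: assume $v$ and $v'$ are adjacent in $H$, and show that either $H$ is tame, or else we can build a $K_5$-minor in $G$ or find an augmenting path, contradicting either $K_5$-minor-freeness or maximality of $M$. First I would set up the basic structure. Since $M$ is a maximum matching with $\mu(H)=4$, by \Cref{clm:noAugmenting} the paths $P,P'$ each contain exactly one vertex outside $S_M$ (their roots $x,x'$), and being $S_M$-disjoint they meet in at most one vertex. Crucially, reversing $M$ through $P$ (and likewise through $P'$) produces another maximum matching $M'$ in which $v$ (resp.\ $v'$) becomes exposed, i.e.\ $v \in V(H)\setminus S_{M'}$. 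If $P$ and $P'$ are fully disjoint, reversing through both yields a maximum matching $M''$ in which both $v$ and $v'$ are exposed, so the edge $vv'$ would be an augmenting path of length $1$ for $M''$ — contradicting \Cref{clm:noAugmenting}. So the real work is when $P$ and $P'$ share their (unique) common vertex, which must then be a common endpoint.

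Next I would analyze the shared-endpoint case. The common vertex cannot be $v$ or $v'$ themselves in a way that trivializes things; one checks the shared vertex is one of the endpoints, and a short case analysis on whether it equals $v$, $v'$, $x$, or $x'$ either gives an immediate augmenting path (hence tameness is moot) or leaves us with a configuration where, after reversing $M$ through the relevant combined alternating structure, we still manage to expose a large independent-ish set. The heart of the argument will be: if we cannot derive a contradiction with maximality, then $M$ together with the edges $vv'$, the last edges of $P$ and $P'$, and the private edges/triangles guaranteed by the neighbouring condition, contract down (contracting each matching edge and each $S_M$-disjoint alternating path to a single vertex) to a $K_5$ minor — \emph{unless} the graph is so constrained that it must be tame. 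Here I would invoke \Cref{lem: H1,lem: H2,lem: H3,lem: H4}: the forbidden subgraphs $2K_3+2K_2$, $C_5+2K_2$, $C_7+K_3$, $C_9$ severely restrict how the four matching edges, their private structure, and the alternating paths can interact. Combined with \Cref{claimSizeWX} (which forces $|W|\le 1$, so at most one private triangle), the only surviving configuration with $|V(H)|\ge 10$ is exactly the one described in the definition of tame: a $C_7$ plus a star, with chords allowed inside. That is, after exhausting the $K_5$-minor constructions, the structural residue is a $C_7$ (coming from chaining three matching edges with their private connections in a cyclic pattern) together with the fourth matching edge acting as a star-like hub, yielding tameness.

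The main obstacle I anticipate is the bookkeeping of the shared-endpoint case together with ruling out all the $K_5$-minor branches cleanly: one has to track which vertex among $\{u_i,v_i,w,x\}$ the paths $P,P'$ pass through, handle the interaction of the alternating paths with the private triangle (there is at most one, by \Cref{claimSizeWX}), and in each branch either exhibit five pairwise-contractible connected sets forming a $K_5$ minor or else conclude we have landed in the tame configuration. The neighbouring hypothesis is used repeatedly in the form ``any two edges of $R$ are at distance $\le 1$,'' which forces the existence of the connecting edges needed to complete each $K_5$ minor; the subtlety is that these connecting edges need not lie in $R$, so one must be careful that the contractions use edges of $G$ (allowed, since we only need a $K_5$ minor in $G$) while the ``at distance $\le 1$'' structure is what supplies them. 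I expect the final write-up to split into: (i) the disjoint-paths case (quick, via augmenting path); (ii) the shared-endpoint case, reduced to a bounded number of sub-configurations; (iii) in each sub-configuration, either a $K_5$-minor contradiction using the excluded-subgraph lemmas, or the conclusion that $H$ is tame.
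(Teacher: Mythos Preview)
Your disjoint-paths case is fine (and equivalent to the paper's: the paper simply concatenates $P$, the edge $vv'$, and $P'$ into a single augmenting path from $x$ to $x'$, which is the same contradiction you reach after your double reversal).

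The gap is in the shared-endpoint case. You treat ``which endpoint is shared'' as an open case analysis over $\{v,v',x,x'\}$, but in fact $S_M$-disjointness forces the answer immediately: $v,v'\in S_M$ cannot be shared, and each path has exactly one non-$S_M$ vertex, namely its root; hence the only possible common vertex is $x=x'$. Once you see this, $P\cup P'\cup\{vv'\}$ is an \emph{odd cycle} $C$ through $x$, alternating with respect to $M$, and its length lies in $\{5,7,9\}$ (length $3$ is excluded by $S_M$-disjointness, length $\ge 9$ would use more than four matching edges). This odd-cycle observation is the crux, and your plan never reaches it. The paper then does a short, clean case split on $|C|$: length $9$ is killed by \Cref{lem: H4}; length $5$ together with the two remaining matching edges yields $H_2=C_5+2K_2$, killed by \Cref{lem: H2}; length $7$ leaves one matching edge $u_4v_4$ outside $C$, and a direct analysis of where any further unsaturated vertex $x''$ can attach (using \Cref{lem: H3} to forbid a private triangle on $u_4v_4$, and augmenting-path arguments to forbid attachments to $C$ or to $v_4$) shows $H$ is exactly the $C_7$-plus-star configuration, i.e.\ tame.

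Your proposed mechanism in the shared case --- ``contract each matching edge and each alternating path to a single vertex to build a $K_5$ minor'' --- does not work as stated: the alternating paths contain the matching edges, so these contractions overlap, and you only get four supervertices from the matching, not five. More to the point, there is no need to re-run any $K_5$-minor argument here: that work is already packaged in \Cref{lem: H1,lem: H2,lem: H3,lem: H4}, and the proof should invoke them directly once the odd cycle is identified.
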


\begin{proof}
  By \Cref{clm:noAugmenting}, both $P$ and $P'$ are not augmenting, and thus as they are $S_M$-disjoint, $P\setminus \sg{x}$ and $P'\setminus \sg{x'}$ are vertex-disjoint.
  Assume by contradiction that $v$ and $v'$ are adjacent in $H$ and that $H$ is not tame; in particular $|V(H)|\geq 10$.
  If $x\neq x'$, then we get an augmenting path between $x$ and $x'$ (by following $P$ from $x$ to $v$, taking $vv'$ and then following $P'$ from $v'$ to $x'$, contradicting \Cref{clm:noAugmenting}.
  Thus $x=x'$ and the two paths plus the edge $vv'$ form a cycle $C$ of odd length, and since $\mu(H)=4$, its length is either $5$, $7$ or $9$ (the assumption that $P$ and $P'$ are $S_M$-disjoint implies that $C$ cannot have length $3$). 
  By \cref{lem: H4}, $H$ cannot contain a $C_9$ as subgraph, so $C$ can only have length $5$ or $7$.
  
  If $C$ has length $5$, then $C$ and the remaining two edges of $M$ form $H_2$ as a subgraph of $H$, contradicting \cref{lem: H2}.
   
  Therefore $C$ has length $7$, and we may assume that $u_4v_4$ is the edge of $M$ that does not belong to $C$.
  Since $H$ is not tame, we get by \cref{claimSizeWX} that $|V(H)\setminus S_M|\geq 2$ thus $V(H)\setminus (S_M \cup \sg{x})$ is non-empty. Let $x''$ be a vertex in $V(H)\setminus (S_M \cup \sg{x})$. If $x''$ is adjacent to both $u_4$ and $v_4$, then we get $H_3$ as subgraph, contradicting \cref{lem: H3}. If  $x''$ is adjacent to $V(C)\setminus\sg{x}$, then we get an augmenting path between $x$ and $x''$, contradicting \Cref{clm:noAugmenting}. If $x''$ is adjacent to some other vertex $y$ of $V(H)\setminus (S_M)$, then $M\cup\sg{xy}$ forms a matching of $H$, contradicting the maximality of $M$.
  Thus, $V(H)\setminus (S_M \cup \sg{x})$ is an independent set of vertices which can only be adjacent to $u_4$ (recall our convention that $v_4$ cannot have a private edge). 
  Moreover, if $v_4$ is adjacent to $C$, then we get an augmenting path between some vertex $x'' \in V(H)\setminus (S_M \cup \sg{x})$ and $x$, contradicting \Cref{clm:noAugmenting}. 
  Therefore $H$ is tame, a contradiction.
\end{proof}

\begin{lemma}\label{cl: path collection}
  Let $G$ be a $K_5$-minor-free graph and $R$ a neighbouring set of edges of $G$. 
  Let $H=G[R]$ and assume that $\mu(H)=4$, that $H$ is not tame, and let $M$ be a maximum matching in $H$.
 
  Let $\mathcal{P}$ be a family of pairwise $S_M$-disjoint alternating paths which do not intersect any private triangle, and which maximizes $|S_M\cap V(\mathcal P)|$.
  There exists an $M$-partition $(U,V,W,X)$ such that for each path $P\in \mathcal P$ with root $x\in X$, all the vertices at even (resp. odd) distance from $x$ along $P$ are in $V$ (resp. $U$). 
 
 Moreover, there is no edge from $X$ to $S_M\setminus V(\mathcal{P})$.
 \end{lemma}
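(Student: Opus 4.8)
The plan is to read the desired $M$-partition off from $\mathcal P$ by orienting matching edges towards roots. Concretely, if a matching edge $e_i=u_iv_i$ lies on a path $P\in\mathcal P$ with root $x$, I put the endpoint of $e_i$ closer to $x$ along $P$ into $U$ and the other into $V$; for the matching edges met by no path of $\mathcal P$, I assign $u_i,v_i$ to $U,V$ using the usual convention (private endpoint in $U$, arbitrary otherwise). This is well defined because the paths of $\mathcal P$ are pairwise $S_M$-disjoint, so no matching edge lies on two of them, and because $W$ and $X$ are determined independently of the choice of $(U,V)$ (item $4$ of \Cref{proprietesM-partition}); so the result is a genuine $M$-partition \emph{as soon as} it is compatible with the rule putting every private endpoint into $U$. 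Note that, with this definition, the conclusion about even/odd distances along paths of $\mathcal P$ holds for the $S_M$-vertices by construction, the root being the only non-$S_M$ vertex of its path (by \Cref{clm:noAugmenting}, $P$ is not augmenting).

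The heart of the matter is therefore the compatibility: I need that whenever a matching edge $e_i$ lies on a path $P\in\mathcal P$ with root $x$ and has a private edge, its private endpoint is the one closer to $x$. Assume not; let $b$ be the private endpoint, lying at even distance from $x$ along $P$, and let $bw$ be a private edge. The sub-path of $P$ from $x$ to $b$ is alternating, and appending $bw$ gives an alternating path from $x$ to $w$; since $P$ is not augmenting, the only vertex of $P$ outside $S_M$ is $x$, so either $w\notin V(P)$, yielding an augmenting path and contradicting \Cref{clm:noAugmenting}, or $w=x$. In the latter case the sub-path of $P$ from $x$ to $b$ plus the edge $bx$ is an odd cycle $C$ through $x$, of length $3$, $5$, $7$ or $9$ since $\mu(H)=4$ and $C$ contains pairwise disjoint matching edges. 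If $|C|=3$ then $C$ is a private triangle of $e_i$ met by $P$, contradicting the definition of $\mathcal P$ (and contradicting that $e_i$ has a private edge); if $|C|=5$ then $C$ together with the two matching edges off $C$ gives $H_2$, contradicting \Cref{lem: H2}; if $|C|=9$ then $C$ is a copy of $H_4=C_9$, contradicting \Cref{lem: H4}.

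The case $|C|=7$ is the main obstacle. Here $C$ uses three matching edges, say $e_1,e_2,e_3$, in the unique pattern possible on a $7$-cycle through a vertex outside $S_M$, while $e_4=u_4v_4$ is disjoint from $C$; moreover the two endpoints of $e_1$ and $e_3$ lying at distance $1$ from $x$ on $C$ are their private endpoints (those two cycle edges at $x$ are private edges). Since $H$ is not tame, $|V(H)|\ge 10$, so by \Cref{claimSizeWX} there is $z\in V(H)\setminus S_M$ with $z\ne x$. The key observation is that $z$ has no neighbour on $C$: from every vertex of $C$ one can reach $x$ by an alternating path staying on $C$ (the two consecutive non-matching edges of $C$ meet exactly at $x$), so an edge from $z$ to $C$ would close an augmenting path, contradicting \Cref{clm:noAugmenting}. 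As $V(H)\setminus S_M$ is independent (item $2$ of \Cref{proprietesM-partition}), $z$ is then adjacent only inside $\{u_4,v_4\}$; if $z$ lies in a private triangle it must be a triangle on $e_4$, so $H$ contains $C_7+K_3=H_3$, contradicting \Cref{lem: H3}; hence $z\in X$ and $zu_4$ is a private edge of $e_4$ with $zv_4\notin E(H)$. I would then exploit the maximality of $|S_M\cap V(\mathcal P)|$: either $\{u_4,v_4\}\cap V(\mathcal P)=\emptyset$, and then the alternating path $z,u_4,v_4$ — which avoids every private triangle and is $S_M$-disjoint from $\mathcal P$ — can be added to $\mathcal P$, a contradiction; or $\{u_4,v_4\}\subseteq V(\mathcal P)$, and then, after rerouting $P$ through the private edges at $x$ (and possibly splitting it into short paths rooted at $x$, which leaves $|S_M\cap V(\mathcal P)|$ unchanged) and re-running the ``augment around $C$'' argument now also using $e_4$, one shows that no vertex of $V(H)\setminus S_M$ has a neighbour outside $V(C)\cup\{u_4\}$, so that $H$ is the disjoint union of the $7$-cycle on $V(C)$ with a star centred at $u_4$, plus chords inside $V(C)\cup\{u_4\}$, i.e. $H$ is tame — a contradiction. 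Making every sub-case of this $C_7$ analysis close (choosing the right reroutings, invoking maximality, tameness and \Cref{lem: H3} at the right moments) is the delicate part.

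Finally, for the ``moreover'' statement, suppose $x'\in X$ is adjacent to some $s\in S_M\setminus V(\mathcal P)$. By item $3$ of \Cref{proprietesM-partition} we have $N_H(X)\subseteq U$, so $s=u_i$ for some $i$; since $x'$ lies in no private triangle, $x'u_i$ is a private edge of $e_i$ and $x'v_i\notin E(H)$. Then $P^{*}:=x',u_i,v_i$ is an alternating path avoiding every private triangle (the edge $e_i$ has none, $x'$ is in none, and $u_i,v_i$ belong only to private triangles of $e_i$). As $u_i=s\notin V(\mathcal P)$, and as any path of $\mathcal P$ meeting $v_i$ would also meet its $M$-partner $u_i$, we get $v_i\notin V(\mathcal P)$, so $P^{*}$ is $S_M$-disjoint from every path of $\mathcal P$. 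Hence $\mathcal P\cup\{P^{*}\}$ is again a family of pairwise $S_M$-disjoint alternating paths avoiding private triangles, with $|S_M\cap V(\mathcal P\cup\{P^{*}\})|=|S_M\cap V(\mathcal P)|+2$, contradicting the maximality of $\mathcal P$; so no such edge exists, which completes the proof.
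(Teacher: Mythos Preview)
Your overall strategy matches the paper's: define the $M$-partition by parity along paths and check compatibility with the private-edge convention. Your ``moreover'' argument is essentially identical to the paper's. The cases $|C|\in\{3,5,9\}$ of your compatibility check are handled correctly.

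The gap is the $|C|=7$ case, which you yourself flag as ``the delicate part'' and leave as a sketch. Your case split there (whether $\{u_4,v_4\}$ meets $V(\mathcal P)$) runs into trouble in the second branch: when $e_4$ lies on $P$ itself (so $P=x,p_1,\dots,p_8$), your proposed ``rerouting through the private edges at $x$'' does not preserve $|S_M\cap V(\mathcal P)|$ without losing $p_7,p_8$, and the rest of the sentence is not an argument.

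The fix is that you are essentially re-proving \Cref{cl:alt path} from scratch inside this lemma. The paper avoids the whole cycle-length case analysis: once you know the private edge $bx$ exists with $b=p_j$ at even distance and you have checked that $p_{j-1}$ is not adjacent to $x$ (else $p_{j-1}p_jx$ would be a private triangle meeting $P$), take the two alternating paths $P':=x,p_1,\dots,p_{j-2}$ and $Q:=x,p_j,p_{j-1}$. They are $S_M$-disjoint, and their non-root extremities $p_{j-2}$ and $p_{j-1}$ are adjacent along $P$; now \Cref{cl:alt path} gives the contradiction in one stroke, uniformly over $j$. Inserting this two-line appeal to \Cref{cl:alt path} in place of your $|C|$-analysis makes your proof complete and, in fact, identical to the paper's.
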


\begin{proof}
   Since $H$ is not tame, it has order at least~10, and thus by \Cref{claimSizeWX}, $X\neq \emptyset$.

   Assume that $\mathcal P$ is a family of pairwise $S_M$-disjoint alternating paths which do not intersect any private triangle, and which maximizes $|S_M\cap V(\mathcal P)|$.
   Note that every edge $uv\in M$ intersects at most one path of $\mathcal P$, and that if some path $P\in \mathcal P$ intersects $uv$, then $uv$ is an edge of $P$. 
   Let $P$ be such a path with root $x\in X$, and assume without loss of generality that there is an edge $uv\in M\cap E(P)$, where 
   $u$ is at even distance from $x$ on $P$, while
   $v$ is at odd distance from $x$ on $P$. Note that as $P$ is alternating, $v$ must be the predecessor of $u$ on $P$.  
   
   We claim that $u$ cannot have a private edge $ux'$. Assume for a contradiction that $u$ is incident with such an edge. Note that we must then have $x=x'$, as otherwise, there would be an augmenting path from $x$ to $x'$.
   Moreover, $v$ is not adjacent to $x$, as otherwise $uvx$ would form a private triangle intersecting $P$. 
   Let $P'$ be the restriction of $P$ from $x$ to the predecessor of $v$ along $P$: $P'$ and $xuv$ form two $S_M$-disjoint paths such that their extremities different from $x$ are adjacent, contradicting \Cref{cl:alt path}.
   
   We thus proved that every vertex of some path $P\in \mathcal P$ having a private edge must be at odd 
   distance on $P$ from its root. It then implies the existence of an $M$-partition with the desired properties. 
   
  To see that there is no edge from $X$ to some vertex from $S_M\setminus V(\mathcal P)$, note that every such edge would be of the form $yu_i$ for some $y\in X$ and $i\in \sg{1, \ldots, 4}$ such that $u_iv_i$ is not covered by some path of $\mathcal P$. In particular, note that then $yu_i$ should be a private edge of $u_i$, implying that $u_iv_i$ has no private triangle. Therefore, the family $\mathcal P\cup \sg{yu_iv_i}$ is a family of pairwise $S_M$-disjoint alternating paths that avoids private triangles, contradicting the maximality of~$|S_M\cap V(\mathcal P)$.
\end{proof}

We are now ready to prove the main result of this section.

\fractionalMatching*

\begin{proof}
 Let $H:=G[R]$. In particular, $H$ has no isolated vertex.
 We will use the fact that $\mu^{\ast}(H)=\tau^{\ast}(H)$, and construct a fractional vertex-cover $h$ of $H$ with value at most $\tfrac92$. 

 First, observe that we must have $\mu(H)\leq 4$, as otherwise, since $R$ is a neighbouring set, the contraction of any matching of size $5$ in $H$ would induce a minor isomorphic to $K_5$ in $G$. 
 On the other hand, recall that the set of endvertices of any maximum matching in $H$ forms a vertex-cover of $H$, thus $\tau(H)\leq 2\mu(H)$. Therefore, as $\mu^{\ast}(H) \leq \tau^{\ast}(H)\leq \tau(H)$, if $\mu(H) \leq 2$, then $\mu(H)^{\ast} \leq 4$ and we are done.
 Hence the remaining cases are $\mu(H)=3$ and $\mu(H)=4$.

\medskip

\paragraph{Case 1: $\mathbf{\mu(H)=3}$.\\}

 To prove that $\mu^{\ast}(H)\leq \frac{9}{2}$, we construct a fractional vertex cover $h$ of $H$ as follows. Let $M$ be a maximum matching of $H$ and $(U,V,W,X)$ an $M$-partition of $V(H)$.
 For every $i\in \sg{1,2, 3}$, we set the following:

 \begin{itemize}
  \item if $e_i$ has a private triangle $u_iv_iw_i$, then we set $h(u_i):= h(v_i) :=h(w_i) := \tfrac12$; 
  \item otherwise, we set $h(u_i):=1$ and $h(v_i):=\tfrac12$.
 \end{itemize} 
 For every remaining vertex $x$, we set $h(x)=0$.
 Note that $h$ is well-defined since this covers all possible cases of the $M$-partition. 
 
 We first prove that $h$ is a fractional vertex-covering of $H$. Let $e=xy\in R$, we will show that $h(x)+h(y)\geq 1$.
 First, note that for every vertex $u\in U\cup V\cup W$, we have $h(u)\geq \tfrac12$, hence if both extremities of $e$ belong to $U\cup V\cup W$, we get $h(x)+h(y)\geq 1$. Consider without loss of generality that $x$ is not in this set, i.e.~$x\in X$. Then, since $N(X)\subseteq U$ (by \cref{proprietesM-partition}), we must have that $y=u_i$ for some $i$. Since $u_i$ has a private edge, $u_i$ does not belong to a private triangle, so $h(u_i)=h(y)=1$. Therefore $h(x)+h(y)\geq h(y)\geq 1$ and we are done.

 It remains to show that $h$ has value at most $\frac{9}{2}$. Let $a$ denote the number of edges in $M$ having a private triangle. Note that the only vertices of $H$ with non zero value are the vertices from $U\cup V \cup W$. Moreover, as every edge from $M$ has at most one private triangle $T$ with total value $\tfrac32$, and as every edge from $M$ which does not have a private triangle has total value at most $\tfrac32$, we then obtain $h(V(H))\leq a\cdot\tfrac32 +(|M|-a)\tfrac32=\tfrac92$.

\paragraph{Case 2: $\mathbf{\mu(H)=4}$.\\}

If $H$ is tame, then we conclude immediately thanks to \Cref{clm:tameIsEasy}. Hence, in the following, we can assume that $H$ is not tame. 
Let $M$ be a maximum matching of $H$ that maximizes the size of $W$ in any $M$-partition $(U,V,W,X)$ (this is well defined since the size of $W$ does not depend on the choice of the $M$-partition, as observed in \cref{proprietesM-partition}).
Let $\mathcal{P}$ be a collection of pairwise $S_M$-disjoint alternating paths that do not intersect any private triangle and that maximizes $|S_M\cap V(\mathcal P)|$. If there are several such matchings for which $|W|$ has the same size, then we choose such a matching $M$ that moreover maximizes $|S_M\cap V(\mathcal{P})|$.
Let $(U,V,W,X)$ be a $M$-partition given by \cref{cl: path collection}.

\begin{itemize}
  \item For each $u_i$ in $U\cap V(\mathcal{P})$, we set $h(u_i):= 1$;
  \item For each $v_i$ in $V\cap V(\mathcal{P})$, we set $h(v_i):= 0$;
  \item For each $x$ in $X$, we set $h(x):= 0$;
  \item For each remaining vertex $v$ ($v\in W \cup S_M\setminus V(\mathcal{P})$), we set $h(v):= \tfrac12$.
 \end{itemize} 

 We first check that $h(V(H))\leq \tfrac{9}{2}$.
 We have by definition $h(X)=0$. Since $|W| \leq 1$, $h(W)\leq \tfrac12$. Finally, for each edge $u_iv_i \in M$, $h(u_i)+h(v_i)=1$, hence $h(U)+h(V)=4$. Thus, in total, $h(V(H)) = h(U)+h(V)+h(W)+h(X)\leq \tfrac{9}{2}$.

 \medskip

 We now show that $h$ is a fractional vertex cover, that is, we prove that there is no edge $yz$ such that $h(y)=0$, and $h(z)\leq \tfrac{1}{2}$. Assume by contradiction that such an edge exists. Since $h(y)=0$, we have $y\in X\cup (V\cap V(\mathcal{P}))$.
 
 Suppose first that $y\in X$. 
 Since, by \cref{proprietesM-partition}, $N_H(X)\subseteq U$, we have $z\in U \subset S_M$. Moreover, since $h(z)\leq \tfrac{1}{2}$ and by the rules of construction of $h$, we have $z \not\in V(\mathcal{P})$. Hence, there is an edge between $S_M \setminus V(\mathcal{P})$ and $X$, contradicting \cref{cl: path collection}. 
 Therefore, $y$ cannot be in $X$.
 
 We now assume that $y\in V\cap V(\mathcal{P})$, and write without loss of generality $y=v_1$. We consider $P\in \mathcal{P}$ such that $v_1\in V(P)$, and let $x\in X$ be its root.
 Note that if $z\in X\cup W$, the adjacency between $v_1\in V$ and $z$ implies, by \cref{proprietesM-partition}, that $u_1v_1z$ is a private triangle, contradicting the fact that $v_1\in V(\mathcal{P})$. Therefore, $z$ belongs to $S_M$. 
 We now distinguish several cases.

 \begin{enumerate}
  \item Assume first that $z \in V(P)$, and note that then, as we assumed that $h(z)\leq \tfrac12$, we have $z\in V$. We thus write without loss of generality $z=v_2$. Note that since $P$ is alternating and both $y$ and $z$ are in $V\cap V(P)$, the path $P$ starting from $x$ does not contain the edge $v_1v_2$. Recall that by \Cref{cl: path collection}, $v_1$ and $v_2$ are at even distance from $x$ on $P$. Hence, the subpath of $P$ connecting $v_1$ and $v_2$ together with the edge $v_1v_2$ creates a cycle $C$ of $H$ with length either $3,5$ or $7$ (note that by \Cref{lem: H4}, $H$ does not contain $C_9$ as a subgraph). In what follows, we may assume without loss of generality that $v_2$ is closer from $x$ than $v_1$ on $P$.
  \begin{enumerate}
    \item If $C$ has length $3$, then its three vertices are $v_2$, $u_1$, $v_1$. If $W \neq \emptyset$, then some edge of $M$ has a private triangle $T$. By definition of $\mathcal P$, $T$ must be disjoint from $P$, hence  observe that in this case, since $M$ has four edges, we can find a subgraph of $H$ isomorphic to $H_1$, obtained by taking the triangles $T$ and $C$, together with some edges in the symmetric difference $M\Delta E(P')$, where $P'$ is the subpath of $P$ from $x$ to $v_2$, contradicting \cref{lem: H1}. 

    If $W = \emptyset$, then, we can reverse $P$ from $x$ to $v_1$.
    We thus get a new matching containing $v_2u_1$ with private triangle $v_2 u_1 v_1$, contradicting our previous assumption that $M$ maximizes $|W|$.
    
    \item If $C$ has length $5$, and since $H$ is not tame, then we can find a subgraph of $H$ isomorphic to $H_2$, by taking $C$ together with the edges in the symmetric difference $M\Delta E(P')$, where $P'$ is the subpath of $P$ from $x$ to $v_2$, contradicting \cref{lem: H2}.
    
    \item If $C$ has length $7$, then $P$ must contain all the edges from $M$. In particular, $W$ is empty. 
    As $H$ is not tame, we cannot have all the vertices of $V(H)\setminus S_M=X$ (which has size at least two) adjacent to only $u_2$. Thus, and since $N_H(X)\subseteq U$, there must exist $\ell \neq 2$ and two distinct vertices $x',x''$ in $X$ ($x$ can be either of them) such that $x'$ is adjacent to $u_2$ and $x''$ is adjacent to $u_\ell$.
    a vertex $x'\in V(H)\setminus S_M=X$, which is not adjacent to $u_2$
    We then get an augmenting path between $x'$ and $x''$, contradicting \Cref{clm:noAugmenting}.
   \end{enumerate}

   \item Assume now that $z \in V(P')$, for some $P' \in \mathcal{P}$ and $P'\neq P$. Then again, we may assume without loss of generality that $z=v_2$, and we denote by $x'$ the root of $P'$ in $X$. We obtain a contradiction with \cref{cl:alt path} when considering the two subpaths of $P$ and $P'$ going from $x$ (resp. $x'$) to $v_1$ (resp. $v_2$).

   \item Assume that $z \not\in V(\mathcal{P})$. Without loss of generality, we have $z\in \sg{u_4,v_4}$. Note that an alternating path contains $u_4$ if and only if it contains $v_4$, hence we must have $\sg{u_4, v_4}\subseteq S_M\setminus V(\mathcal P)$. In particular, by \Cref{cl: path collection} there is no edge between $X$ and $\sg{u_4, v_4}$. If the vertex $z'$ from $\sg{u_4,v_4}$ which is distinct from $z$ is adjacent to some $x'\in W$, then in particular, $x'\neq x$ and we obtain an augmenting path from $x$ to $x'$ by following the subpath of $P$ going from $x$ to $y$, and then the edges $yz$, $zz'$ and $z'x'$, contradicting \Cref{clm:noAugmenting}. 
   It follows that among $\sg{u_4,v_4}$, only $z$ can have a private edge (which, if it exists, must connect $z$ with a vertex in $W$). Hence, $z=u_4$ and $u_4v_4$ does not have a private triangle.

   If $v_1$ is an extremity of $P$, then note that after replacing $P$ in $\mathcal P$ by the path that extends it with $v_1u_4v_4$, we obtain a family $\mathcal P'$ of pairwise $S_{M}$-disjoint alternating paths that avoid all private triangles of edges of $M$ contradicting the fact that the family $\mathcal P$ maximizes $|S_M\cap V(\mathcal P)|$. 
   Hence we may assume in what follows that $u_1v_1u_2v_2$ is a subpath of $P$ (recall that by \Cref{cl: path collection}, the vertex $u_1$ is closer to $x$ than $v_1$ on $P$). We let $P_1$ denote the subpath of $P$ going from $v_1$ to $x$, and consider the matching $M'$ obtained by reversing $M$ through $P_1$. Let $(U',V',W',X')$ be an $M'$-partition.
   
   In particular, we now have $x\in S_{M'}$ and $v_1\in X'\cup W'$. As $P_1$ is disjoint from the private triangles of the edges from $M$, every private triangle of an edge of $M$ must be a private triangle of an edge of $M'$. In particular, it implies that $W\subseteq W'$, and thus, as we chose $M$ that maximizes $|W|$, we then have $W'=W$.
   Thus, $X'=(X\setminus\sg{x})\cup \sg{v_1}$, $ S_{M'}=(S_{M'}\setminus\sg{v_1})\cup \sg{x}$, $W'=W$ and if $W$ is not empty, the only possible private triangle with respect to both edges from $M$ and $M'$ is of the form $wu_3v_3$ for some $w\in W$. 

    We now want to define a family $\mathcal{P}'$ of pairwise $S_{M'}$-disjoint alternating paths that avoid all private triangles of edges of $M'$ such that $V(\mathcal{P}')\cap S_M=(V(\mathcal{P})\cap S_M)\cup \{u_4,v_4\}$.  Denote by $P_2$ the subpath of $P$ going from $v_1$ to its extremity which belongs to $S_M$, and $P_4:=v_1u_4v_4$. Observe that $\{P_1,P_2,P_4\}$ are pairwise $S_{M'}$-disjoint alternating paths rooted in $v_1$, and from the previous discussion, they avoid all private triangles of edges of $M'$.
    We define $\mathcal{P}'$ depending on the place of $u_3v_3$ in $\mathcal P$ as follow:

    \begin{enumerate}
        \item Assume that $u_3v_3$ is not covered by the paths of $\mathcal P$. Then it suffices to take $\mathcal P':=\sg{P_1, P_2, P_4}$. 
        
        \item Assume that $u_3v_3$ belongs to either $P_1$ or $P_2$. Again, it suffices to take $\mathcal P':=\sg{P_1, P_2, P_4}$.
        
        \item Assume now that $u_3v_3$ is covered by the paths of $\mathcal P$, but does not belong to either $P_1$ nor $P_2$. In particular, $W=W'=\emptyset$ so every alternating pas avoids trivially any private triangle, $P_1=v_1u_1x$, $P_2=v_1u_2v_2$, and there exists a path $P_3=x'u_3v_3\in \mathcal{P}$.
        
        If $x' \neq x$, then $x'\in X'$, and $P_3$ is also $S_{M'}$-disjoint from $P_1,P_2,P_4$, so we can take $\mathcal P':=\sg{P_1, P_2, P_3, P_4}$. 
        
        Finally, if $x=x'$, then we denote by $P'_1$ the path obtained from concatenating $P_1$ and $P_3$. Note that $P'_1$ is an alternating path for $M'$, rooted in $v_1$, which is $S_{M'}$-disjoint from $P_2,P_4$. We can thus take $\mathcal P':=\sg{P'_1, P_2, P_4}$.
    \end{enumerate}

    In each case, $\mathcal{P}'$ is a family of pairwise $S_{M'}$-disjoint alternating paths that avoid all private triangles of edges of $M'$, and such that $V(\mathcal{P}')\cap S_M=(V(\mathcal{P})\cap S_M)\cup \{u_4,v_4\}$, so $|V(\mathcal{P})\cap S_M|<|V(\mathcal{P}')\cap S_{M'}|$, which contradicts our choice of $(M, \mathcal P)$.
   
 \end{enumerate}

Putting all cases together, we deduce that no edge $yz$ with both $h(y)=0$ and $h(z)\leq \tfrac{1}{2}$ exists, hence $h$ is indeed a fractional vertex cover of $H$.
This concludes our proof of Case 2. 
\end{proof}

If we go in more details through all different cases of the proof of \Cref{thm: frac-intro}, we moreover obtain the following structural characterisation of the three extremal cases for which the bound $\tfrac92$ is attained, see \Cref{fig: minimal_92} for an illustration of those.

\corollaryOfMatching*

\begin{figure}[htb]
  \centering   
  \includegraphics[scale=0.7, page=2]{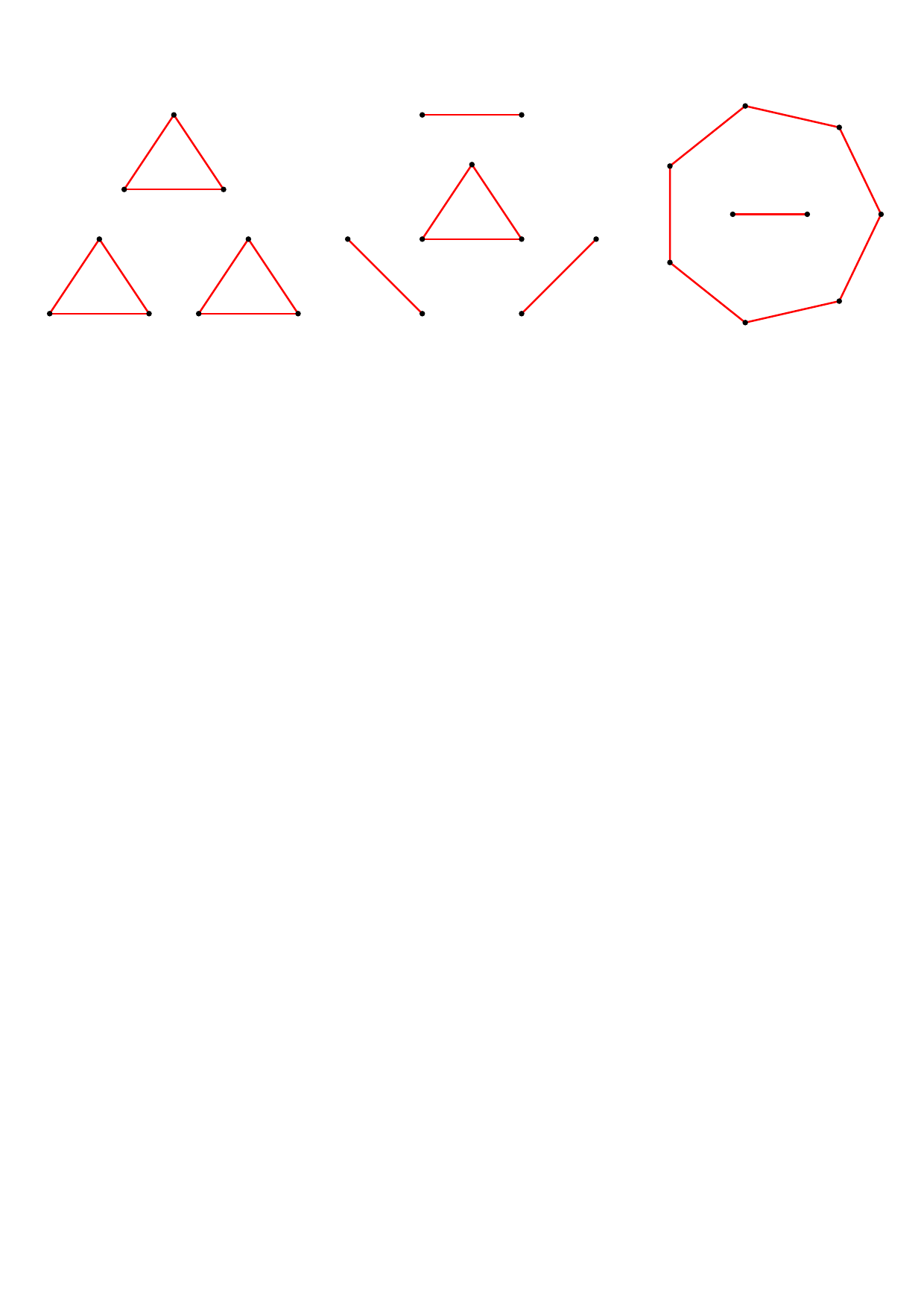}
  \caption{In red, the minimal three graphs contained in every neighbouring set of a planar graph attaining the bound $\tfrac92$ from \Cref{thm: frac-intro}. The black edges witness that the three graphs can be found as neighbouring sets of planar graphs. In blue some fractional matchings with value $\tfrac92$.} 
  \label{fig: minimal_92}
\end{figure}

\begin{proof}[Proof (sketch)]
We briefly sketch how to derive \Cref{cor: minimal_92} from our proof of \Cref{thm: frac-intro}. We assume that $H:=G[R]$ does not contain any of the graphs $3K_3, K_3+3K_2$ and $C_7+K_2$ as a subgraph, and show that then, $\mu^{\ast}(H)\leq 4$.
 We reuse the notations from the proof of \Cref{thm: frac-intro}. In the case $\mu(H)\leq2$, we have $\mu^{\ast}(H)\leq \tau(H)\leq 4$, hence we may assume that $\mu(H)\in \sg{3,4}$. Recall that in particular, the only possible vertices from $S_M$ that can have pendant edges are the $u_i$'s.
 
 Note also that in the proof of \Cref{thm: frac-intro}, in the case $\mu(H)=4$, the only cases in which we constructed a fractional vertex cover with value exactly $\tfrac92$ were either when $H$ is tame, or when one of the edges of a maximum matching belongs to a private triangle, the latter case implying immediately that $K_3+3K_2$ is a subgraph of $H$.
 
 \begin{claim}
  \label{clm: coro-9sommets}
  If $\mu(H)=4$ and $|V(H)|\leq 9$, then $\mu^{\ast}(H)\leq 4$.
 \end{claim}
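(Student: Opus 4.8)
The plan is to turn the statement into a short finite case analysis driven by the structure of optimal fractional matchings. Throughout we are under the standing hypothesis of the proof of \Cref{cor: minimal_92}: $H=G[R]$ contains none of $3K_3$, $K_3+3K_2$, $C_7+K_2$ as a subgraph, and $G$ is planar, hence $K_5$-minor-free, so \Cref{lem: H2} and \Cref{lem: H4} apply to $H$.

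First I would settle the size of $H$. Since $\mu(H)=4$, $H$ has at least $8$ vertices, so the hypothesis $|V(H)|\le 9$ forces $|V(H)|\in\{8,9\}$. Summing the fractional-matching constraints over all vertices gives, for any fractional matching $g$ of $H$, the inequality $2\sum_{e\in E(H)}g(e)=\sum_{v\in V(H)}\sum_{e\ni v}g(e)\le |V(H)|$, hence $\mu^{\ast}(H)\le |V(H)|/2$. If $|V(H)|=8$ this already gives $\mu^{\ast}(H)\le 4$ and we are done, so we may assume $|V(H)|=9$, whence $\mu^{\ast}(H)\le \tfrac92$. Suppose for contradiction $\mu^{\ast}(H)>4$. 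Since the fractional matching polytope of a graph is half-integral (its vertices are half-integral, with support a vertex-disjoint union of edges at value $1$ and odd cycles at value $\tfrac12$), the value $\mu^{\ast}(H)$ is a multiple of $\tfrac12$, so in fact $\mu^{\ast}(H)=\tfrac92=|V(H)|/2$, i.e. $H$ has a \emph{fractional perfect matching}.

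Next I would invoke the classical characterisation: a graph on $n$ vertices satisfies $\mu^{\ast}=n/2$ if and only if it has a spanning subgraph $F$ every component of which is a single edge or an odd cycle (equivalently, here one can read off such an $F$ directly from a half-integral optimal fractional matching, using that every vertex is tight when the value equals $n/2$). Looking at the multiset of orders of the components of $F$, we obtain a multiset of integers drawn from $\{2,3,5,7,9\}$ summing to $9$; since $9$ is odd, at least one part is odd, i.e. $F$ has a cycle component. An easy enumeration shows the only such multisets are $\{9\}$, $\{7,2\}$, $\{5,2,2\}$, $\{3,3,3\}$ and $\{3,2,2,2\}$, which exhibit as subgraphs of $H$ respectively $C_9$, $C_7+K_2$, $C_5+2K_2=H_2$, $3K_3$ and $K_3+3K_2$. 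The case $C_9$ contradicts \Cref{lem: H4}, the case $H_2$ contradicts \Cref{lem: H2}, and each of the remaining three contradicts the standing hypothesis. In all cases we get a contradiction, so $\mu^{\ast}(H)\le 4$, as claimed.

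The argument is essentially bookkeeping: the only non-self-contained inputs are the half-integrality of the fractional matching polytope and the edge-plus-odd-cycle description of fractional perfect matchings, both entirely standard and to be cited rather than proved. The one point to be careful about --- and really the only nontrivial step --- is checking that the enumeration of partitions of $9$ into parts from $\{2,3,5,7,9\}$ is exhaustive (for instance, there is no admissible partition of the form $\{5,3,\dots\}$ or $\{3,3,2,\dots\}$, since the leftover $1$ cannot be realised), so that we genuinely land in one of the five listed forbidden configurations.
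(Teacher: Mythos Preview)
Your argument is correct, and it takes a genuinely different route from the paper's own proof.

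The paper proceeds constructively on the dual side: for $|V(H)|=9$ it fixes a maximum matching $M$ with $M$-partition $(U,V,W,X)$, observes that the unique unsaturated vertex $v$ has neighbours only in $U$ (since no edge of $M$ has a private triangle, else $K_3+3K_2$ appears), and then grows layers $U_1:=N_H(v)$, $V_1:=\{v_j:u_j\in U_1\}$, $U_2:=N_H(V_1)\setminus(U_1\cup V_1)$, and so on. Using the forbidden subgraphs $H_2,C_9,K_3+3K_2,C_7+K_2$ it argues that $\{v\}\cup V_1\cup V_2\cup V_3\cup V_4$ is independent and not joined to the remaining vertices, and then exhibits an explicit fractional vertex cover of value~$4$ (weight $1$ on the $U_i$'s, weight $0$ on $\{v\}\cup\bigcup V_i$, weight $\tfrac12$ elsewhere).

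Your approach instead works on the primal side and is considerably shorter: you reduce to $|V(H)|=9$, use half-integrality to pin $\mu^{\ast}(H)$ to $\tfrac92$, read off a spanning edge-plus-odd-cycle subgraph, and enumerate the five partitions of $9$ into parts from $\{2,3,5,7,9\}$, each of which yields one of the forbidden configurations $C_9$, $C_7+K_2$, $H_2$, $3K_3$, $K_3+3K_2$. What you gain is brevity and a clean structural explanation of why exactly those five graphs are the obstructions; what the paper's proof offers instead is an explicit cover that makes no appeal to the (standard but external) polyhedral facts about the fractional matching polytope. Both are valid; yours fits the spirit of \Cref{cor: minimal_92} particularly well, since that result is precisely about identifying the subgraphs forcing $\mu^{\ast}=\tfrac92$.
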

 
 \begin{proof}
  Let $M$ be a maximum matching of $H$ and $(U,V,W,X)$ an $M$-partition of $V(H)$.
  If $|V(H)|\leq 8$, the result is immediate as one can assign weight $\tfrac{1}{2}$ to every vertex. So we assume that $|V(H)|=9$ and that the only vertex of $H$ not in $S_M$, say $v$, satisfies $d_H(v)\geq 1$.
  By previous remark, no edge of $M$ has a private triangle. In particular, $v$ is only adjacent to vertices of $U$. We let $U_1:=N_H(v)$,
  and $V_1:=\sg{v_j: u_j\in U_1}$.
  Note that $V_1$ is an independent set in $H$, as otherwise, $H$ would contain a copy of $H_2$, contradicting \Cref{lem: H2}.
  
  We now set $U_2:=N_H(V_1)\setminus (U_1\cup V_1)$. If $U_2=\emptyset$, then setting $h(x):=1$ for each $x\in U_1$, $h(v):=h(w):=0$ for each  $w\in V_1$, and  $h(y)=\tfrac{1}{2}$ for each $y\in V(H)\setminus  (U_1\cup V_1\cup\sg{v})$ gives a fractional vertex cover with value 4. We now assume that $U_2\neq\emptyset$. Note that $v\notin U_2$, as otherwise $H$ would contain $K_3+3K_2$. Observe that every edge of $M$ has at most one endvertex in $U_2$, as otherwise $H$ would contain either a copy of $K_3+3K_2$ or of $C_7+K_2$. Therefore, since none of the vertices of $U_2$ can have private edges,
  without loss of generality, we may assume that $U_2\subset U$, and set $V_2:=\sg{v_j: u_j\in U_2}$.
  Note that by definition $V_2$ is disjoint from $V_1$. Moreover, $V_2$ is an independent set of $H$, as otherwise $H$ would contain a copy of $K_3+3K_2$ or of $H_2=C_5+2K_2$ or of $H_4=C_9$,
  the latter two cases contradicting \Cref{lem: H2} and \Cref{lem: H4}.
  Moreover, there cannot be an edge between $V_1$ and $V_2$, as otherwise $H$ would contain either a copy of $K_3+3K_2$, or of $C_7+K_2$. Thus $V_1\cup V_2$ is an independent set in $H$.

  We similarly define for $i\in \sg{3, 4}$, $U_i:=N_H(V_{i-1})\setminus (U_1\cup V_1\cup \cdots \cup U_{i-1}\cup V_{i-1})$ (we stop if $U_i=\emptyset$) and, for the same reasons observe that we may assume that $U_i$ only contains vertices of $U$, and $V_i:=\sg{v_j: u_j\in U_i}$. Eventually, we claim that $\sg{v}\cup V_1\cup V_2\cup V_3\cup V_4$ is an independent set in $H$, as otherwise $H$ would contain one of the following graphs: $K_3+3K_2, H_2=C_5+2K_2, C_7+K_2, C_9$. Note also that our inductive definition implies that every vertex which is not in one of the sets $U_i\cup V_i$ cannot be adjacent to a vertex from $V_1\cup V_2\cup V_3\cup V_4$.
  
  We now define a fractional vertex cover of $H$ with value $4$ by assigning value $1$ to every vertex from $U_1\cup U_2\cup U_3\cup U_4$, value $0$ to every vertex from $\sg{v}\cup V_1\cup V_2\cup V_3\cup V_4$, and value $\tfrac12$ to all other vertices. We claim that the obtained map indeed defines a fractional vertex cover. 
 \end{proof}

 \begin{claim}
  \label{clm: coro-tame}
  If $H$ is tame and $\mu(H)=4$, then $\mu^{\ast}(H)\leq 4$.
 \end{claim}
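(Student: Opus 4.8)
The plan is to reduce the statement to the already-proved \Cref{clm: coro-9sommets}. Recall that throughout the proof of \Cref{cor: minimal_92} we assume that $H=G[R]$ contains none of $3K_3$, $K_3+3K_2$ and $C_7+K_2$ as a subgraph; here we additionally know that $H$ is tame and that $\mu(H)=4$. First I would invoke the definition of tameness and split into two cases. If $|V(H)|\le 9$, then there is nothing left to do: \Cref{clm: coro-9sommets} applies (its hypothesis $\mu(H)=4$ is exactly what we have assumed) and yields $\mu^{\ast}(H)\le 4$. It therefore remains to handle the case $|V(H)|\ge 10$, where by the definition of tameness $H$ must be the disjoint union of a cycle $C$ of length $7$ and a star with center $v$, together with some additional edges all of whose endvertices lie in $V(C)\cup\sg{v}$.

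In that remaining case I would derive a contradiction. Since the union is disjoint we have $v\notin V(C)$, hence $|V(C)\cup\sg{v}|=8<|V(H)|$, so the star has at least one leaf $\ell$. As $\ell\notin V(C)\cup\sg{v}$ and all the additional edges have both endvertices in $V(C)\cup\sg{v}$, the only neighbour of $\ell$ in $H$ is $v$; in particular the edge $v\ell$ is vertex-disjoint from $C$. Then $C$ together with $v\ell$ forms a copy of $C_7+K_2$ inside $H$, contradicting our standing assumption. Hence $|V(H)|\le 9$ in every case, and the claim follows from \Cref{clm: coro-9sommets}.

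I do not expect any real obstacle here: the only point to notice is that excluding $C_7+K_2$ collapses the second family in the definition of tameness (for graphs on at least $10$ vertices) into the case of graphs on at most $9$ vertices, which is already handled by \Cref{clm: coro-9sommets}.
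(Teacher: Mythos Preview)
Your proof is correct and follows essentially the same approach as the paper: reduce to \Cref{clm: coro-9sommets} when $|V(H)|\le 9$, and otherwise observe that the star must have at least one edge, yielding a copy of $C_7+K_2$ and contradicting the standing assumption. The paper's argument is slightly terser but identical in substance.
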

 
 \begin{proof}
  By \Cref{clm: coro-9sommets}, we may assume that $|V(H)|\geq 10$, and that $H$ consists of the disjoint union of a cycle $C$ of length $7$ and a star with center $v$, with additional potential edges only between vertices of $V(C)\cup \sg{v}$. In particular, as $|V(H)|\geq 10$, note that the star with center $v$ has at least one edge, implying that $C_7+K_2$ is a subgraph of $H$.
 \end{proof}
 
 By \Cref{clm: coro-tame} and our previous  observation, we may assume from now on that $\mu(H)=3$. In particular, as $H$ has no isolated vertex, the following claim then immediately follows.
 
  \begin{claim}
  \label{clm: C7-minimaux}
 If $H$ has a copy of $C_7$ as a subgraph, then $|V(H)|=7$.
 \end{claim}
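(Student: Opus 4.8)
The plan is to use directly the standing assumption of this part of the proof, namely that $\mu(H)=3$, together with the fact that $H=G[R]$ has no isolated vertex (every vertex of $H$ is incident to an edge of $R$). So suppose $H$ contains a copy of $C_7$ on vertex set $\{a,b,c,d,e,f,g\}$, listed in cyclic order, and assume for contradiction that there is a vertex $v\in V(H)\setminus V(C_7)$. Since $v$ is not isolated, it has a neighbour $u$ in $H$, and the argument will split according to whether $u\in V(C_7)$ or not.

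The key elementary observation is that a path on an even number of vertices has a perfect matching, and in particular $C_7$ minus any single vertex is a path on six vertices, hence perfectly matchable by three edges. I would carry this out as follows. If $u\notin V(C_7)$, then the edge $uv$ is vertex-disjoint from $C_7$, so $\{uv\}$ together with the size-$3$ matching $\{ab,cd,ef\}$ of $C_7$ (which leaves only $g$ unsaturated) is a matching of size $4$ in $H$, contradicting $\mu(H)=3$. If instead $u\in V(C_7)$, say $u=a$, then $C_7-a$ is the path $b\,c\,d\,e\,f\,g$, which has the perfect matching $\{bc,de,fg\}$; adding the edge $va$ again yields a matching of size $4$, a contradiction.

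In both cases we reach a contradiction, so no such $v$ exists; that is, $V(H)=V(C_7)$ and $|V(H)|=7$. I do not expect any genuine obstacle here: the only points requiring care are that one really does invoke the no-isolated-vertex property to produce the neighbour $u$, and the trivial fact that $C_7$ deprived of one vertex is an even path. This is precisely why the excerpt asserts that the claim ``immediately follows'' once the case $\mu(H)=3$ has been reached.
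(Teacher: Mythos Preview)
Your proof is correct and is exactly the argument the paper has in mind: the claim is stated right after observing that $\mu(H)=3$ and that $H$ has no isolated vertex, and the paper says it ``immediately follows'' without giving further details. Your case split according to whether the neighbour $u$ lies on the $C_7$ or not, producing a matching of size $4$ in each case, is the intended (and essentially unique) justification.
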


 Using \Cref{clm: C7-minimaux}, we may assume from now on that $H$ has no cycle of length seven.
  
 \begin{claim}
  \label{clm: C5-minimaux}
  If $H$ contains a copy of $C_5+K_2$, then it has a fractional vertex cover with value $4$.
 \end{claim}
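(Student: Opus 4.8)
The plan is to produce an explicit fractional vertex cover of $H$ of value at most $4$, using only that $\mu(H)=3$ and that $H$ has no isolated vertex. Write the copy of $C_5+K_2$ inside $H$ as a $5$-cycle $C=c_1c_2c_3c_4c_5$ together with a vertex-disjoint edge $e_0=xy$, and set $Z:=V(H)\setminus(V(C)\cup\sg{x,y})$. The first step is a rigidity observation: for each $k\in\sg{1,\dots,5}$, the set $M_k$ consisting of $e_0$ together with the near-perfect matching of $C$ missing $c_k$ has size $3$, hence is a maximum matching of $H$; consequently $V(H)\setminus S_{M_k}=\sg{c_k}\cup Z$ is an independent set. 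Letting $k$ range over all five values, this forces every $z\in Z$ to satisfy $N_H(z)\subseteq\sg{x,y}$, and since $H$ has no isolated vertex, each $z\in Z$ is adjacent to $x$ or to $y$.

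Next I would dispose of the degenerate cases. If $Z=\emptyset$ then $|V(H)|=7$, and assigning weight $\tfrac12$ to every vertex gives a fractional cover of value $\tfrac72\le 4$. Otherwise, partition $Z$ according to whether a vertex is adjacent in $H$ to $x$ only, to $y$ only, or to both. If the first two parts were both nonempty, picking one vertex from each together with $c_1c_2$ and $c_3c_4$ would produce a matching of size $4$, contradicting $\mu(H)=3$; so, up to swapping the roles of $x$ and $y$, we may assume every vertex of $Z$ is adjacent to $x$. A similar matching-extension argument — using the edge $yz^{\ast}$ for a vertex $z^{\ast}\in Z$ adjacent to both $x$ and $y$, together with $xz'$ for any other $z'\in Z$, and again $c_1c_2,c_3c_4$ — shows that if some vertex of $Z$ is adjacent to both $x$ and $y$, then $Z$ is a singleton, whence $|V(H)|=8$ and the uniform weight $\tfrac12$ is once more a cover of value $4$.

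The remaining case is that every vertex of $Z$ is a degree-one neighbour of $x$ in $H$. Here the plan is to set $h(x):=1$ — which already covers $e_0$, every edge $xc_i$, and every edge incident to $Z$ — and to let $h$ agree, on the six vertices of $V(C)\cup\sg{y}$, with an optimal fractional vertex cover $h'$ of the induced subgraph $H':=H[V(C)\cup\sg{y}]$. Since $\tau^{\ast}(H')=\mu^{\ast}(H')\le \tfrac{|V(H')|}{2}=3$, the resulting map has value at most $1+3=4$; and since every edge of $H$ is either incident with $x$ or has both endpoints in $V(C)\cup\sg{y}$ (the vertices of $Z$ being pendant at $x$), $h$ is a fractional vertex cover. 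I expect the only delicate part to be the bookkeeping in the matching-extension arguments of the second step — verifying that the size-$4$ matchings are genuinely vertex-disjoint, and pinning down exactly when the vertex count collapses to $7$ or $8$ — but this should be entirely routine.
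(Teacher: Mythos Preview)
Your proof is correct and follows essentially the same strategy as the paper's: both use maximality of the size-$3$ matchings to show that vertices outside $V(C)\cup\sg{x,y}$ can only attach to $\sg{x,y}$, then use matching-extension arguments to reduce to a single hub and finish with an explicit cover. Your rotating-matching trick (running over all five near-perfect matchings of $C$) makes explicit a step the paper leaves implicit, and your finish via $\tau^{\ast}(H')\le |V(H')|/2=3$ is a touch slicker than the paper's hand-written weights, but the underlying argument is the same.
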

 
 \begin{proof}
 Assume that $H$ has a copy of $C_5+K_2$, and let $a,b,c,d,e,f,g$ denote the pairwise distinct vertices of $H$, such that $a,b,c,d,e$ form a copy $C$ of $C_5$, and $fg\in E(H)$.
 Assume first that $fg$ belongs to a triangle $T$ of $H$ disjoint from $C$. Then, as $\mu(H)=3$, no other vertex from $V(C)\cup V(T)$ can have a neighbour outside $V(C)\cup V(T)$, and we obtain that $|V(H)|=8$, so $\mu^{\ast}(H)\leq 4$. 
 
 Assume now that $fg$ does not belong to a triangle disjoint from $C$. Then, as $\mu(H)=3$, only one of the vertices from $Y:=\sg{a,b,c,d,e,f,g}$ can have a neighbour outside $Y$ (call it $w$), and moreover, it should be either $f$ or $g$, say $f$. We then find a vertex cover $h$ with value $4$ by setting $h(f):=1$, $h(w):=0$ and  $h(x):=\tfrac12$ for each $x\in X\setminus\sg{f}$.
 \end{proof}

 Using \Cref{clm: C5-minimaux}, we may assume from now on that $H$ contains no copy of $C_5+K_2$. 

 We now claim that, using the fact that $\mu(H)=3$, the proof of \Cref{cl:alt path} generalizes and gives the following. 
 
 \begin{claim}[Analogue of \Cref{cl:alt path}]
  \label{clm: coro-L13}
  Let $P,P'$ be two $S_M$-disjoint alternating paths, and let $x,v$ (resp. $x',v'$) be the extremities of $P$ (resp. $P'$) such that $x$ (resp. $x'$) is the root of $P$ (resp. $P'$).
  Then either $H$ is tame, or it contains a copy of $C_7$ or $C_5+K_2$, or $v$ and $v'$ are not adjacent in $H$.
 \end{claim}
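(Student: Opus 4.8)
The plan is to adapt the proof of \Cref{cl:alt path} almost verbatim, noticing that the hypothesis $\mu(H)=3$ shortens the case analysis considerably. First I would suppose, for contradiction with the last disjunct, that $v$ and $v'$ are adjacent in $H$; the goal is then to exhibit a copy of $C_7$ or of $C_5+K_2$ in $H$ (so in fact the ``tame'' alternative will never be invoked here, and it is kept in the statement only to mirror \Cref{cl:alt path}). As in \Cref{cl:alt path}, \Cref{clm:noAugmenting} forces neither $P$ nor $P'$ to be augmenting, so each of them has exactly one vertex outside $S_M$, namely its root, and $P\setminus\{x\}$ and $P'\setminus\{x'\}$ are vertex-disjoint. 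If $x\neq x'$, then following $P$ from $x$ to $v$, then the edge $vv'$, then $P'$ backwards from $v'$ to $x'$ produces an augmenting path, contradicting \Cref{clm:noAugmenting}; hence $x=x'$. We may assume $v\neq v'$ (otherwise the conclusion is vacuous) and that both $P$ and $P'$ are non-trivial (a trivial path makes $v=x$, and this degenerate situation is dealt with directly). Then $P\cup P'\cup\{vv'\}$ is a simple cycle $C$ of $H$.

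The core of the argument is a counting step. Because $P$ and $P'$ are $S_M$-disjoint alternating paths and every vertex of $S_M$ lying on an alternating path appears there together with its $M$-partner, every vertex of $C$ other than $x$ belongs to $S_M$, and these vertices split into complete edges of $M$; writing $t$ for their number, $C$ has length $2t+1$. Since both paths are non-trivial, each contributes at least one edge of $M$, so $t\geq 2$, and since $\mu(H)=3$ we get $t\leq 3$, hence $t\in\{2,3\}$. If $t=3$, then $C$ is a cycle of length $7$ and we are done. If $t=2$, then $C$ has length $5$ and uses exactly two of the three edges of $M$; the remaining edge $u_3v_3$ is therefore vertex-disjoint from $C$, so $C$ together with $u_3v_3$ is a copy of $C_5+K_2$ in $H$, and again we are done.

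The only place where care is needed — and the main obstacle, such as it is — is the bookkeeping in this counting step: checking that $C$ cannot have length $3$, that its saturated vertices really do partition into whole $M$-edges, and that the degenerate cases in which one of $P,P'$ is a single vertex or in which $C$ fails to be a genuine simple cycle are either vacuous (an augmenting path appears) or reduce to the generic case. All of this follows from the definition of an alternating path and from $S_M$-disjointness, exactly as in the corresponding passage of \Cref{cl:alt path}, and no idea beyond those already used there is required. Once \Cref{clm: coro-L13} is established, the two extra disjuncts ``$H$ contains $C_7$'' and ``$H$ contains $C_5+K_2$'' are immediately absorbed by \Cref{clm: C7-minimaux} and \Cref{clm: C5-minimaux} wherever the claim is applied, so the remainder of the proof of \Cref{cor: minimal_92} can proceed in parallel to the case $\mu(H)=4$ of \Cref{thm: frac-intro}.
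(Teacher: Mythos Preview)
Your proposal is correct and follows exactly the route the paper intends: the paper does not spell out a proof of \Cref{clm: coro-L13} at all, merely stating that ``using the fact that $\mu(H)=3$, the proof of \Cref{cl:alt path} generalizes,'' and your write-up is precisely that generalization. Your observation that the ``tame'' disjunct is never actually needed here (the $t=2$ and $t=3$ cases directly yield $C_5+K_2$ and $C_7$) is accurate and slightly sharper than what the paper records in the statement.
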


 \begin{claim}
  \label{clm: coro-2triangles}
  If at least two edges from $M$ have private triangles, then $\mu^{\ast}(H)\leq 4$. In particular, if $H$ contains $2K_3+K_2$ as a subgraph, then $\mu^{\ast}(H)\leq 3$. 
 \end{claim}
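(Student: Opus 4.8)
The plan is to produce an explicit fractional vertex cover $h$ of $H$ of value at most~$4$ and then use $\mu^{\ast}(H)=\tau^{\ast}(H)$. Write $M=\{e_1,e_2,e_3\}$ with $e_i=u_iv_i$; recall that at this stage $\mu(H)=3$, that by convention only the vertices $u_i$ may carry private edges, and that an edge of $M$ with a private triangle has no private edge at either endpoint. Let $T_1=u_1v_1w_1$ and $T_2=u_2v_2w_2$ be private triangles of $e_1$ and $e_2$, with $w_1,w_2\in V(H)\setminus S_M$, the degenerate possibility $w_1=w_2$ (written $w$) being allowed throughout.

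The first step is a structural reduction using an $M$-partition $(U,V,W,X)$. Every vertex of $X$ lies on a private edge and $N_H(X)\subseteq U$; since $u_1,u_2$ carry no private edge, every vertex of $X$ is in fact a pendant neighbour of $u_3$, and in particular $e_3$ has no private triangle, so $W\subseteq\{w_1,w_2\}$ and $N_H(v_3)\subseteq S_M$. If $X=\emptyset$ then $V(H)=S_M\cup W$ has at most $8$ vertices, hence $\mu^{\ast}(H)\le|V(H)|/2\le 4$ and we are done; this also settles the case where all three $e_i$ carry private triangles, since then each $u_i$ has a private triangle and hence no private edge, forcing $X=\emptyset$. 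So assume $X\neq\emptyset$ and fix a pendant $\ell$ of $u_3$. I would then eliminate every remaining possible edge of $H$, using $\mu(H)=3$ and the excluded subgraphs: (i) there is no edge between $V(T_1)$ and $V(T_2)$ --- any of $u_1u_2,u_1v_2,v_1u_2,v_1v_2,w_1w_2$ would, together with $e_3$ and one edge inside each triangle avoiding the endpoints just used, form a matching of size~$4$, while if $w_1=w_2$ such an edge instead creates a copy of $C_5+K_2$ through $w$, excluded by \Cref{clm: C5-minimaux}; the edges $w_1u_2,w_1v_2,w_2u_1,w_2v_1$ are impossible since they would be private edges of $u_2$ or $u_1$, or would violate $N_H(v_2)\subseteq S_M\cup\{w_2\}$; and (ii) $N_H(v_3)=\{u_3\}$ --- any edge from $v_3$ into $V(T_1)\cup V(T_2)$ yields, with $\ell u_3$ and one edge inside each triangle, a matching of size~$4$ (using $N_H(v_3)\subseteq S_M$). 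Hence every edge of $H$ lies inside $T_1$ or $T_2$, or is incident to $u_3$.

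With this description the cover is immediate: put $h(u_3):=1$, $h(x):=\tfrac12$ for each $x\in V(T_1)\cup V(T_2)$, and $h:=0$ on all other vertices (namely $v_3$ and the pendants; in the degenerate case $|V(T_1)\cup V(T_2)|=5$). Its value is $1+6\cdot\tfrac12=4$, or $\tfrac72$ when $w_1=w_2$, and it is a valid vertex cover because each triangle edge has both endpoints of weight $\tfrac12$, each edge incident to $u_3$ is covered by $h(u_3)=1$, and $v_3$ and the pendants have no other incident edge; therefore $\mu^{\ast}(H)=\tau^{\ast}(H)\le 4$. For the final sentence, a copy of $2K_3+K_2$ in $H$ supplies a maximum matching $M$ --- one edge from each triangle together with the remaining edge --- two of whose edges carry private triangles, so the argument above applies and yields the stated bound. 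The step I expect to be the main obstacle is (i)--(ii): each candidate extra edge must be ruled out by exhibiting an explicit matching of size~$4$ or a copy of an excluded subgraph, and there are several symmetric sub-cases, in particular the degenerate case $w_1=w_2$, to keep track of.
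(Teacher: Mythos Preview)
Your approach is essentially the same as the paper's: exhibit an explicit fractional vertex cover of value at most~$4$, after a structural reduction showing that $v_3$ has no neighbours outside $\{u_3\}$ once $u_3$ has a private edge. Two remarks, however.

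First, your step~(i) is entirely unnecessary. Your cover assigns weight $\tfrac12$ to every vertex of $V(T_1)\cup V(T_2)$, so any edge between the two triangles is already covered by $\tfrac12+\tfrac12\ge 1$. The only edges that genuinely need attention are those incident to $v_3$ or to a pendant of $u_3$, and for these step~(ii) together with $N_H(X)\subseteq\{u_3\}$ suffices. You correctly identify~(ii) as the key step, and your argument there (building a matching of size~$4$ from $\ell u_3$, $v_3z$, and one edge from each triangle) is the same observation as the paper's ``any edge between $v_3$ and some vertex different from $u_3$ would contradict $\mu(H)=3$''.

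Second, there is a small gap in your $X=\emptyset$ case. You assert $|V(H)|\le 8$, but this uses $|W|\le 2$, which you only derived under the assumption that $e_3$ has no private triangle---and that deduction came from $X\neq\emptyset$. When $X=\emptyset$ and all three $e_i$ have private triangles with pairwise distinct third vertices $w_1,w_2,w_3$, you get $|V(H)|=9$; you must invoke here the standing hypothesis that $H$ contains no copy of $3K_3$ (the three triangles would be pairwise disjoint). The paper makes this explicit: if $e_3$ has a private triangle $T_3$ disjoint from $T_1\cup T_2$, then $H\supseteq 3K_3$, a contradiction; otherwise $T_3$ shares its third vertex with $T_1$ or $T_2$ and one reduces by symmetry to the overlapping-triangles case. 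With that one line added, your argument is complete.
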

 
 \begin{proof}
  Assume without loss of generality that $e_1$ and $e_2$ belong to private triangles, and denote them respectively by $T_1, T_2$. First, note that if $T_1$ and $T_2$ intersect, then the fractional matching defined in the proof of \Cref{thm: frac-intro} has value at most $4$, hence we may assume that $T_1$ and $T_2$ are disjoint. Moreover, if $e_3$ has a private triangle $T_3$, either $T_3$ is disjoint from $T_1\cup T_2$, in which case $H$ contains $3K_3$, or $T_3$ intersects one of the triangles $T_1, T_2$, in which case we might conclude symmetrically that $\mu^{\ast}(H)\leq 4$. 
 
  We thus may assume that $T_1$ and $T_2$ are disjoint, and that $e_3$ has no private triangle. Note that if all neighbours of $u_3,v_3$ are in $T_1\cup T_2$, we can find a fractional vertex cover with value $4$ by giving values $\tfrac12$ to $u_3, v_3$ and to the vertices of $T_1\cup T_2$.
  
  Assume now that $u_3$ has a pendant edge $e=u_3w_3$, such that $w_3$ is not in $T_1\cup T_2$. Then we claim that assigning weights $\tfrac12$ to vertices from $T_1\cup T_2$, weight $1$ to $u_3$ and weights $0$ to all other vertices defines a fractional vertex cover of $H$ with value $4$. This comes from the observation that any edge between $v_3$ and some vertex different from $u_3$ would contradict that $\mu(H)=3$. 
\end{proof}

Using \Cref{clm: coro-2triangles}, we may assume from now on that $H$ contains no copy of $2K_3+K_2$. In particular, note that if $|V(H)|\geq 7$, then it implies that $|X|\geq 1$. We then claim that using \Cref{clm: coro-L13}, the proof of \Cref{cl: path collection} immediately generalizes and imply the following.

\begin{claim}[Analogue of \Cref{cl: path collection}]
\label{clm: coro-L14}
  Assume that $H$ is not tame, and let $\mathcal{P}$ be a family of pairwise $S_M$-disjoint alternating paths which do not intersect any private triangle, and which maximizes $|S_M\cap V(\mathcal P)|$.
  There exists an $M$-partition $(U,V,W,X)$ such that for each path $P\in \mathcal P$ with root $x\in X$, all the vertices at even (resp. odd) distance from $x$ along $P$ are in $V$ (resp. $U$). 
 Moreover, there is no edge from $X$ to $S_M\setminus V(\mathcal{P})$.
\end{claim}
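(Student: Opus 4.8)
The plan is to transcribe the proof of \Cref{cl: path collection} almost verbatim, with exactly two modifications: (i)~every appeal to \Cref{cl:alt path} is replaced by \Cref{clm: coro-L13}, whose two extra alternatives (``$H$ tame'' and ``$H$ contains $C_7$ or $C_5+K_2$'') are excluded by our standing assumptions; and (ii)~the appeal to \Cref{claimSizeWX}, which was stated only for $\mu(H)=4$, is replaced by a direct verification that $X\neq\emptyset$.

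First I would check that $X\neq\emptyset$. Since $H$ is not tame we have $|V(H)|\ge 10$, hence $|V(H)\setminus S_M|\ge |V(H)|-2\mu(H)\ge 4$. Next, $|W|\le 1$: each edge of $M$ has at most one private triangle, so if two distinct edges of $M$ had private triangles these triangles would be vertex-disjoint and, together with the third edge of $M$, would form a copy of $2K_3+K_2$ in $H$, contradicting the standing assumption made after \Cref{clm: coro-2triangles}. Since $(W,X)$ partitions $V(H)\setminus S_M$ by item~1 of \Cref{proprietesM-partition}, it follows that $|X|\ge 4-1\ge 1$.

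Next, I would run the core argument of \Cref{cl: path collection} unchanged. Fix a family $\mathcal P$ as in the statement; every edge of $M$ meets at most one path of $\mathcal P$, and when it does it is an edge of that path. Suppose $P\in\mathcal P$ has root $x\in X$ and that some edge $uv\in M\cap E(P)$ has $u$ at even and $v$ at odd distance from $x$ along $P$; since $P$ is alternating, $v$ is the predecessor of $u$ on $P$. To see that $u$ carries no private edge, suppose $ux'$ were one: then $x=x'$ (otherwise the segment of $P$ from $x$ to $u$ followed by $ux'$ would be an augmenting path, contradicting \Cref{clm:noAugmenting}), and $v$ is not adjacent to $x$ (otherwise, by item~3 of \Cref{proprietesM-partition}, $uvx$ would be a private triangle meeting $P$, contrary to the choice of $\mathcal P$); writing $P'$ for the segment of $P$ from $x$ to the predecessor of $v$, the paths $P'$ and $xuv$ are then $S_M$-disjoint alternating paths whose extremities other than $x$ are adjacent in $H$, which, since $H$ is not tame and contains neither $C_7$ nor $C_5+K_2$, contradicts \Cref{clm: coro-L13}. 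Hence every vertex carrying a private edge that lies on a path of $\mathcal P$ is at odd distance from that path's root, so assigning, for each $M$-edge met by $\mathcal P$, its even-distance endpoint to $V$ and its odd-distance endpoint to $U$ produces an $M$-partition $(U,V,W,X)$ with the desired structure. Finally, the absence of an edge from $X$ to $S_M\setminus V(\mathcal P)$ is shown exactly as in \Cref{cl: path collection}: such an edge $yu_i$ (with $y\in X$, using $N_H(X)\subseteq U$) would be a private edge of $u_i$, so $u_iv_i$ would have no private triangle, and then $\mathcal P\cup\sg{yu_iv_i}$ would be an admissible family with strictly larger $|S_M\cap V(\mathcal P)|$, a contradiction.

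The step I expect to need the most care is not any single computation but the bookkeeping of the previous paragraph: one must be sure that the reductions already performed in the proof of \Cref{cor: minimal_92} --- namely that $H$ is not tame and contains no $C_7$, no $C_5+K_2$ and no $2K_3+K_2$ --- are precisely what is needed both to kill the extra disjuncts of \Cref{clm: coro-L13} and to force $X\neq\emptyset$. Granting this, the argument of \Cref{cl: path collection} applies word for word.
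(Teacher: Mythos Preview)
Your proposal is correct and follows exactly the paper's approach, which simply asserts that the proof of \Cref{cl: path collection} carries over once \Cref{cl:alt path} is replaced by \Cref{clm: coro-L13}; your write-up is in fact more detailed than what the paper provides. One small slip: the sentence ``each edge of $M$ has at most one private triangle, so if two distinct edges of $M$ had private triangles these triangles would be vertex-disjoint'' is not a valid inference (two edges could have private triangles sharing their $W$-vertex); the clean argument for $|W|\le 1$ is to start from two distinct vertices $w,w'\in W$, observe they lie in private triangles of \emph{different} matching edges (since each edge has at most one private triangle), and then conclude these triangles are vertex-disjoint --- exactly as in \Cref{claimSizeWX}. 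This does not affect the rest of your argument.
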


We now claim that we can reproduce the proof from the case $\mu(H)=4$ of \Cref{thm: frac-intro}: we choose $M$ and the family $\mathcal P$ exactly the same way, using \Cref{clm: coro-L14}, and define the mapping $h: V(G)\to \mathbb R^+$ as before. By \Cref{clm: coro-2triangles}, $|W|\leq 1$, hence it follows that $h$ has value at most $\tfrac72$.

To show that $h$ defines a fractional vertex-cover, we claim that again, the case disjunction from the proof of \Cref{thm: frac-intro} works exactly the same, with the slight difference that we use in Case 1.a) that $H$ does not contains $2K_3+K_2$ (instead of $H_1$), in Case 1.b) that $H$ does not contains $C_5+K_2$, and in Case 1.c) that $H$ does not contains $C_7$. The proofs of Cases 2 and 3 then generalizes the exact same way, using \Cref{clm: coro-L13,clm: coro-L14}.
\end{proof}

\section{Further discussion}
\label{sec: ccl}
We strongly believe that the ideas behind the reduction from the diameter problem to a fractional matching problem could be applied to a more general setting than the computation of $\np_{\Delta, 3}$. In particular, it could be interesting to try reusing them to compute more general bounds on $\np_{\Delta, D}$ when $D$ is odd (recall that the cases where $D$ is even have been solved in \cite{TISHCHENKO2001}), or on some other variants of the diameter degree problem in more general sparse classes.

\paragraph*{Towards an exact bound}
We believe that in fact, our approach could allow to find the exact additive constant $c$ such that $\np_{\Delta,3}=\lfloor\tfrac92\Delta\rfloor + c$ for large values of $\Delta$: our proof of \Cref{thm: main1} implies that for every planar graph $G$ with diameter $3$ that has $\tfrac92\Delta + \Omega(1)$ vertices, we can construct a planar auxilliary graph $\Gamma$ with a neighbouring set $R$ of edges, such that $\mu^{\ast}(\Gamma[R])=\tfrac92$, such that $\Gamma$ is obtained from $G$ by emptying some nice lanterns, and adding some corresponding additional edges to $R$, each of them being either between two hubs, or pendant. In particular, by \Cref{cor: minimal_92} $\Gamma[R]$ contains a subgraph $H$ isomorphic to one of the three graphs $3K_3, K_3+3K_2, C_7+K_2$. Moreover, note that for every edge $e$ that belongs to a cycle of $H$, $\Gamma[R]$ cannot have an edge which is pendant and attaches to one of the two endvertices of $e$, otherwise $\Gamma[R]$ would either contain a matching of size $5$, or a copy of $H_1$, contradicting \Cref{lem: H1}. This implies that every edge of $R$ that belongs to a cycle of $H$ connects two hubs $u,v$ of a nice lantern $L$ of $G$, whose interior moreover only contains vertices that are adjacent to both $u$ and $v$. In particular, there are at most $\mu^{\ast}(\Gamma[R])$ such vertices. 

A way to obtain a more precise additive constant then consists in enumerating all possible ways of connecting the edges of $H$ to make it a neighbouring set of a planar graph, and, according to each possibility, show that every vertex of $G$ drawn in some face of the resulting planar graph can be converted into a weight $\tfrac{1}{\Delta}$ in some fractional matching of $\Gamma[R]$ (up to increasing a little bit $R$). However, we did not find a simple way to make this approach work within a reasonable number of cases, without making the size of this (already quite long) paper increase too much.

\paragraph*{Fractional domination number}
 A \emph{fractional dominating set} in a graph $G$ is a mapping $f: V(G)\to \mathbb R_{\geq 0}$ such that $\forall v\in V(G), \sum_{u\in N[v]}f(v)\geq 1$. We let $\gamma_f(G)$ denote the \emph{fractional dominating number} of $G$, that is, the infimum over the values of all fractional dominating sets of $G$. The dual of the fractional dominating number is the \emph{fractional packing number} $\rho_f(G)$, defined as the supremum over the values of all mappings $f: V(G)\to \mathbb R_{\geq 0}$ such that $\forall v\in V(G), \sum_{u\in N[v]}f(v)\leq 1$. By the strong duality theorem of linear programming, $\gamma_f(G)=\rho_f(G)$.
It is not hard to show that for every graph $G$,
 $$\frac{|V(G)|}{\Delta +1}\leq \gamma_f(G)\leq \gamma(G)$$
 
 Thus, for a graph $G$, finding upper-bounds on $\gamma_f(G)$, immediately gives upper bounds for $\tfrac{|V(G)|}{\Delta+1}$. On the other hand, we know that if $G$ is a sufficiently large planar graph of diameter 3, then $\gamma(G) \leq 6$~\cite{GH02}. A natural question is thus to determine an upper bound for $\gamma_f(G)$ when $G$ is a planar graph with diameter $3$. In particular, note that this upper bound is strictly greater than $\tfrac92$, as shown by the two graphs in \Cref{fig:extremal_fractional_dominating}, which have fractional dominating number $\tfrac{14}{3}>\tfrac92$. However, we could not find constructions of planar graphs $G$ with diameter $3$ for which $\gamma_f(G)>\tfrac{14}{3}$. 
 
\subsection*{Acknowledgement}
We would like to thank Ignasi Sau for pointing to us the reference \cite{alber2004polynomial} where the notion of region decomposition was introduced, which turned out to coincide with the collection of interiors we construct in the proof of \Cref{thm: main1} (see \Cref{rem: region-decomposition}). 

\begin{figure}[H]
    \centering
   \begin{subfigure}[b]{1\textwidth}
        \centering
        \includegraphics[scale=1.4]{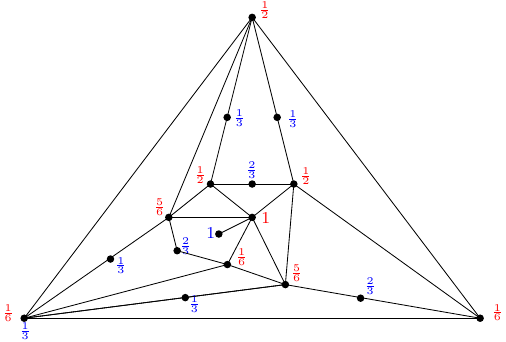}
        \caption{Graph with radius 2}
        \label{subfig:fractional_dominating_radius2}
    \end{subfigure}

    \begin{subfigure}[b]{1\textwidth}
        \centering
        \includegraphics[scale=1.4]{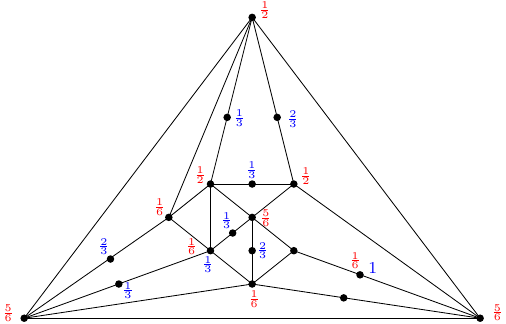}
        \caption{Graph which yields a construction on $\tfrac{9}{2}\Delta-3$ (by duplicating all vertices of degree 2)}
        \label{subfig:extremal_fractional_dominating}
    \end{subfigure}
    
    \caption{Examples of planar graphs with diameter 3 and $\gamma_f = \tfrac{14}{3}$. The red valuation gives a fractional dominating set showing that $\gamma_f\leq\tfrac{14}{3}$ and the blue valuation gives a fractional packing showing that $\rho_f\geq\tfrac{14}{3}$.}
    \label{fig:extremal_fractional_dominating}
\end{figure}


\bibliographystyle{alpha}
\bibliography{biblio}

\end{document}